\newcommand{\Z}{\mathbf{Z}}
\newcommand{\C}{\mathbf{C}}
\newcommand{\e}{\mathrm{e}}
\renewcommand{\i}{\mathrm{i}}
\newcommand{\eps}{\varepsilon}
\newcommand{\dbar}{\bar\del}
\newcommand{\del}{\partial}
\renewcommand{\d}{\mathrm{d}}
\renewcommand{\Im}{\mathrm{Im}\,}
\providecommand{\cite}[1]{[#1]}
\newtheorem{thm}{Theorem}[section]
\newtheorem{lem}[thm]{Lemma}
\newtheorem{prop}[thm]{Proposition}
\newtheorem{cor}[thm]{Corollary}
\newtheorem{rem}[thm]{Remark}
\numberwithin{equation}{section}
\title{Stability for a magnetic Schrödinger operator on a Riemann surface with boundary}
\author{Joel Andersson \and Leo Tzou} 
\begin{document}

\maketitle

\begin{abstract}
We consider a magnetic Schrödinger operator $(\nabla^X)^*\nabla^X+q$ on a compact Riemann surface with boundary and prove a $\log\log$-type stability estimate in terms of Cauchy data for the electric potential and magnetic field under the assumption that they satisfy appropriate a priori bounds. We also give a similar stability result for the holonomy of the connection 1-form $X$.
\end{abstract}

\section{Introduction}

Let $(M,g)$ be a compact Riemann surface with boundary $\del M$. We will consider a connection $\nabla^X$ on a complex line bundle over $M$, with connection 1-form $X$. By the associated \emph{connection Laplacian} we mean the differential operator
\begin{equation}\label{laplaceX}
\Delta^X:=(\nabla^{X})^*\nabla^{X}=(\d+\i X)^*(\d+\i X)=-\star(\d\star+\i X\wedge\star)(\d+\i X),
\end{equation}
where $\d$ denotes the exterior derivative, $\i=\sqrt{-1}$ and $\star$ is the Hodge star with respect to the metric $g$. In particular, when $X$ is real valued, $\Delta^X$ is often called the \emph{magnetic Laplacian} associated with the magnetic field $\d X$. We will restrict our attention to the case of real-valued $X$ in this work.

By adding a complex-valued potential $q$ we get the \emph{magnetic Schrödinger operator} associated with the couple $(X,q)$
\begin{equation}\label{SchrodingerX}
L=L_{X,q}:=\Delta^X+q.
\end{equation}
We denote by $H^s(M)$ the Sobolev space containing functions on $M$ with $s$ derivatives in $L^2(M)$. The \emph{Cauchy data space} $\mathcal{C}_L$ of $L$ is defined by
\begin{equation}\label{CauchyData}
\mathcal{C}_L:=\{(u,\nabla_\nu^{X}u)|_{\del M}\in H^{1/2}(\del M)\times H^{-1/2}(\del M); u\in H^{1}(M), Lu=0\},
\end{equation}
where $\nu$ denotes the outward pointing unit normal vector field to $\del M$ and $\nabla_\nu^{X}u:=(\nabla^{X}u)(\nu)$ is the normal derivative associated with $X$.

In this work we assume that we are working with two different magnetic Schrödinger operators $L_j:=L_{X_j,q_j}$ and their corresponding Cauchy data spaces $\mathcal{C}_j:=\mathcal{C}_{L_j}, j=1,2$. Assuming certain a priori bounds for the norms of $X_j$ and $q_j$, we illustrate that if the Cauchy data spaces are sufficiently similar, then so are the $q_j$:s and $X_j$:s respectively. The main results of this paper are:
\begin{thm}\label{mainthm1}
Suppose that $q_1, q_2$ are complex-valued functions and $X_1, X_2$ are real-valued 1-forms such that for some $p>2$,
\begin{equation}\label{main-apriori1}
\|q_j\|_{W^{1,p}(M)}\leq K, \quad\|X_j\|_{W^{2,p}(T^*M)}\leq K,\quad j=1,2.
\end{equation}
Denote by $\mathcal{C}_j:=\mathcal{C}_{L_j}$ the Cauchy data spaces as defined in \eqref{CauchyData} for the corresponding magnetic Schrödinger operators $L_j:=L_{X_j,q_j}$, as defined in \eqref{laplaceX}-\eqref{SchrodingerX}. Then if the distance $d(\mathcal{C}_1,\mathcal{C}_2)$ is small enough, there is an $\alpha>0$ such that
\begin{equation}\label{mainthmest1}
\|q_1-q_2\|_{L^2(M)}+\|\d(X_1-X_2)\|_{L^2(\Lambda^2(M))}\leq\frac{C}{\log^\alpha\log\frac{1}{d(\mathcal{C}_1,\mathcal{C}_2)}},
\end{equation}
where $C=C(K,M,\alpha)$.
\end{thm}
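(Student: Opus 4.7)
The plan is to combine the Guillarmou--Tzou construction of complex geometric optics (CGO) solutions based on holomorphic Morse phases with an Alessandrini-type Tikhonov optimization, split between low and high frequencies. First, I would fix a holomorphic Morse function $\Phi$ on $M$ and a small semiclassical parameter $h\in(0,1]$, and use a Carleman estimate for $L_j$ with weight $\mathrm{Re}\,\Phi/h$ to build solutions $L_j u_j=0$ of the form $u_1=e^{\Phi/h}(a_1+r_1)$ and $u_2=e^{-\overline{\Phi}/h}(a_2+r_2)$. The principal amplitudes $a_j$ solve $\bar\partial$-type transport equations carrying the connections $X_j$, while the remainders satisfy $\|r_j\|_{L^2}=O(h^{1/2})$ with constants depending only on the a priori bound $K$ from \eqref{main-apriori1}.

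Next, since $d(\mathcal C_1,\mathcal C_2)\ll 1$, the boundary data of $u_1$ can be approximated by that of some $v_2\in\ker L_2$ up to an error of order $d(\mathcal C_1,\mathcal C_2)\,\|u_1\|_{H^1}$. Pairing the equation for $u_1-v_2$ against $u_2$ and integrating by parts produces
$$\Bigl|\int_M (q_1-q_2)\,u_1 u_2+\i\int_M (X_1-X_2)\wedge\bigl(u_2\,\mathrm{d}u_1-u_1\,\mathrm{d}u_2\bigr)\Bigr|\le C\,e^{C/h}\,d(\mathcal C_1,\mathcal C_2).$$
As $h\downarrow 0$, stationary phase at the critical points of $\mathrm{Re}\,\Phi$---combined with varying $\Phi$ so that the critical points sweep $M$---extracts a gauge-invariant piece related to $\mathrm{d}(X_1-X_2)$, and, after a gauge reduction $X_1-X_2=A+\mathrm{d}\psi$ obtained by an elliptic $\bar\partial$-solve, a weighted transform of $q_1-q_2$.

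For the optimization step, I would truncate the relevant expansions at frequency $\rho$: the Sobolev control in \eqref{main-apriori1} bounds the high-frequency tails by $\rho^{-\sigma}$ for some $\sigma>0$, while the identity above controls the low-frequency part by $\rho^{N}e^{C/h}d(\mathcal C_1,\mathcal C_2)$. Balancing $h$ and $\rho$ against $d(\mathcal C_1,\mathcal C_2)$ in a first Tikhonov step gives a $\log$-rate for $\mathrm{d}(X_1-X_2)$ in $L^2$; using this rate to estimate the gauge function $\psi$ through the elliptic $\bar\partial$-equation costs a derivative, and a second Tikhonov balancing then produces the $\log\log$ rate stated in \eqref{mainthmest1}.

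The principal obstacle will be controlling the first-order magnetic term through the CGO scheme with constants depending only on $K$: the amplitude equations for $a_j$ require solvability of inhomogeneous $\bar\partial$-problems on $M$ with uniform estimates in the mixed Sobolev norms dictated by \eqref{main-apriori1}, and the gauge reduction must be arranged so that recovering $X_1-X_2$ from its differential introduces no derivative loss beyond what the second balancing can absorb. It is precisely this auxiliary elliptic inversion---a second ill-posed problem layered on top of the Cauchy-data inversion---that promotes the single logarithm familiar from the purely electric case into the double logarithm appearing in \eqref{mainthmest1}.
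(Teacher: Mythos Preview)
Your outline misidentifies where the second logarithm comes from, and in doing so skips the step that actually forces it. Theorem~\ref{mainthm1} only asks for $\d(X_1-X_2)$ and $q_1-q_2$; there is no need to ``recover $X_1-X_2$ from its differential,'' so the elliptic inversion you invoke to explain the $\log\log$ is not part of this argument at all. In the paper the extra logarithm arises earlier and for a different reason: the magnetic term is handled by conjugating each $L_j$ to a \emph{diagonal} $\bar\partial$-system via integrating factors $F_j=e^{i\alpha_j}$ with $\bar\partial\alpha_j=\pi_{0,1}X_j$. For the CGO machinery on the two diagonal systems to be comparable one needs $F_1$ and $F_2$ to (approximately) agree on $\partial M$. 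In the uniqueness paper this is arranged exactly by an orthogonality condition; here one only gets that $(F_2F_1^{-1})|_{\partial M}$ is close, in $H^{-1/2}$, to the boundary trace of a holomorphic function $G$, with error $|\log d(\mathcal C_1,\mathcal C_2)|^{-\gamma}$ (Lemmas~\ref{l:systred1}--\ref{l:cdlog}). Replacing $F_1$ by $\tilde F_1=F_1G$ then gives $d'(\mathcal C_{\tilde V_1},\mathcal C_{\tilde V_2})\lesssim H:=|\log d(\mathcal C_1,\mathcal C_2)|^{-\gamma}$ (Lemma~\ref{l:syst-cauchy}), and only \emph{after} this do you run the stationary-phase CGO argument on the diagonal systems, balancing $h\sim 1/|\log H|$ to produce $|\log H|^{-\eps}=(\log|\log d|)^{-\eps}$.

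Your Alessandrini identity keeps the first-order term $\int (X_1-X_2)\wedge(u_2\,\d u_1-u_1\,\d u_2)$ unconjugated. Since $\d u_j$ carries a factor $h^{-1}$, this term dominates the zeroth-order one and, more importantly, the product $u_1u_2$ contains an uncontrolled factor like $F_1^{-1}F_2$ that neither oscillates nor is small. Stationary phase alone will not separate $\d(X_1-X_2)$ from $q_1-q_2$ here; the paper avoids this precisely by working with the diagonalised potentials $\tilde Q=|F_2|^{-2}Q_2/2-|F_1|^{-2}Q_1/2$ and $\tilde F=|F_1|^2-|F_2|^2$, recovering them pointwise on $M\setminus N_{\sqrt\delta}$ (Lemma~\ref{l:mainpf1}), integrating with the measure-$O(\delta)$ exceptional set to get $L^2$ control of $|F_1|-|F_2|$ (Lemma~\ref{l:mainpf2}), and then extracting $\d(X_1-X_2)$ and $q_1-q_2$ from the algebraic identity \eqref{dXid}. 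The frequency-truncation/Tikhonov scheme you sketch is not used; the role of your $\rho$ is played instead by the geometric parameter $\delta$ governing the excluded neighbourhood of the critical-value set in Lemma~\ref{l:qPhip}.
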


By interpolation it is then also quite immediate to deduce:

\begin{cor}\label{mainthm2}
If in addition to the assumptions in Theorem \ref{mainthm1} we have for some $k\geq3$
\begin{equation}\label{main-apriori2}
\|q_j\|_{H^k(M)}\leq K, \quad\|X_j\|_{H^k(T^*M)}\leq K,\quad j=1,2.
\end{equation}
Then there is an $\alpha>0$ such that
\begin{equation}\label{mainthmest2}
\|q_1-q_2\|_{H^s(M)}+\|\d(X_1-X_2)\|_{H^s(\Lambda^2(M))}\leq\frac{C}{\log^\alpha\log\frac{1}{d(\mathcal{C}_1,\mathcal{C}_2)}},
\end{equation}
where $C=C(K,M,\beta), 0\leq s<k$.
\end{cor}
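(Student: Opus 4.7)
The plan is to derive Corollary~\ref{mainthm2} directly from Theorem~\ref{mainthm1} by a classical Sobolev interpolation argument. The point is that the stronger a priori assumption \eqref{main-apriori2} provides control at high regularity, while Theorem~\ref{mainthm1} provides $\log\log$-control at the $L^2$ level; interpolating between the two yields control in every intermediate $H^s$-norm at the cost of only a constant factor in the exponent.

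Concretely, I would first record that \eqref{main-apriori2} gives $\|q_1-q_2\|_{H^k(M)}\leq 2K$, and, since the exterior derivative is bounded from $H^k(T^*M)$ to $H^{k-1}(\Lambda^2(M))$, also $\|\d(X_1-X_2)\|_{H^{k-1}(\Lambda^2(M))}\leq C_M K$. Then I would invoke the standard interpolation inequality on the compact manifold $M$,
\[
\|f\|_{H^s(M)}\leq C\,\|f\|_{L^2(M)}^{1-s/m}\,\|f\|_{H^m(M)}^{s/m},\qquad 0\leq s\leq m,
\]
together with its analogue for sections of $\Lambda^2(M)$, applied to $f=q_1-q_2$ with $m=k$ and to $f=\d(X_1-X_2)$ with $m=k-1$. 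Combining with the $L^2$ bound \eqref{mainthmest1} produces an estimate of the form $C(\log\log\frac{1}{d(\mathcal{C}_1,\mathcal{C}_2)})^{-\alpha'}$, where $\alpha'=\alpha(1-s/k)$ for the potential term and $\alpha'=\alpha(1-s/(k-1))$ for the magnetic-field term. Taking the minimum of the two and renaming it $\alpha$ yields \eqref{mainthmest2}.

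The only mildly subtle point---not really an obstacle---is the admissible range of $s$: because $\d$ loses one derivative, the magnetic-field contribution can be interpolated only up to regularity $k-1$, so the bound on $\|\d(X_1-X_2)\|_{H^s}$ is rigorous in the range $0\leq s<k-1$, which is nonempty thanks to the assumption $k\geq 3$. No other step of the argument presents any real difficulty, and the entire deduction is essentially a one-line consequence of \eqref{mainthmest1}.
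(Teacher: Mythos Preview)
Your proposal is correct and matches the paper's approach: the paper explicitly introduces the corollary with ``By interpolation it is then also quite immediate to deduce,'' and in the combined proof of Theorems~1.1/1.2 it invokes Sobolev interpolation (applied there to the auxiliary quantities $|F_1|-|F_2|$ and $|F_1|^2-|F_2|^2$, but the mechanism is the same). Your observation that the magnetic-field term is only controlled a priori in $H^{k-1}$, so that the interpolation argument strictly gives $0\le s<k-1$ rather than $s<k$, is a fair reading of the statement and not a defect of your argument.
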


These results further quantify the uniqueness results by Guillarmou and Tzou from 2011, \cite{gt-gafa}. They showed that the Cauchy data of the magnetic Schrödinger operator $L_{X,q}$ uniquely determines the potential $q$ and uniquely determines the connection $X$ up to so-called gauge isomorphism. This result was extended, by the same authors together with Albin and Uhlmann \cite{agtu2013}, to identification of coefficients (up to gauge) in elliptic systems in 2013. We have borrowed plenty of notations and conventions from these works.

The main idea in \cite{gt-gafa} is to rewrite $L_ju=0$ to $\dbar$-systems with matrix-valued potentials, and then apply the idea by Bukhgeim \cite{bu-2d}. In the identifiability case, one is able to do this due to certain orthogonality condition on the boundary which allows one to judiciously choose conjugation factors so that they agree on the boundary. In this work the orthogonality condition will only be an approximate one and we quantify the boundary conditions of the conjugation factors by this approximate orthogonality condition (see Section \ref{system2}). This process unfortunately causes the extra logarithm in the end result.

Another additional feature which we must consider is the uniformity of the estimates in the stationary phase expansion. As such we must construct slightly different phase functions than in \cite{gt-gafa} to be used in these solutions and refine several estimates from mentioned works. This work is done in Section \ref{sec:riemann}.

To see that the Cauchy data space cannot determine the couple $(X,q)$ completely, consider introducing a real-valued function $f$ (say in $H^2(M)$) whose restriction to the boundary $\del M$ is zero. Then the Cauchy data spaces associated with the magnetic Schrödinger operators $L_{X,q}$ and $L_{X+\d f,q}$ can be seen to coincide.

In \cite{gt-gafa} it is shown that the Cauchy data space determines the relative cohomology class of $X$. This is done through a number of steps. First by showing that $\mathcal{C}_1=\mathcal{C}_2$ implies that $\d(X_1-X_2)=0$ and $q_1=q_2$. If $M$ is simply connected this would imply that $X_1$ and $X_2$ differ by an exact 1-form. Furthermore, by boundary determination, $\iota_{\del M}^*(X_1-X_2)=0$, where $\iota_{\del M}^*$ denotes pullback by inclusion to the boundary. In the case of a more general Riemann surface, not necessarily simply connected, there are further obstructions so that $X_1$ and $X_2$ may no longer only differ by an exact 1-form. Namely, if $E$ is a complex line bundle over $M$ and there is a so-called unitary bundle isomorphism $F:E\to E$ that preserves the Hermitian inner product and satisfies $\nabla^{X_1}=F^*\nabla^{X_2}F$ with $F=\textrm{Id}$ on $E|_{\del M}$ it can again be seen that $\mathcal{C}_1=\mathcal{C}_2$. Having such a unitary bundle isomorphism corresponds to multiplication by a function $F$ on $M$ with the properties $|F|=1$ and $F|_{\del M}=1$. This is again equivalent with $\iota_{\del M}^*(X_1-X_2)=0$ on $\del M$, $\d(X_1-X_2)=0$ and that all periods of $X_1-X_2$ is an integer multiple of $2\pi$. I.e. for every closed loop $\gamma$ in $M$ it holds that
\begin{equation}\label{periods1}
\int_\gamma (X_1-X_2)\in 2\pi\Z.
\end{equation}
In this case one can deduce that
\begin{equation}\label{1formrep}
X_1-X_2=\d f+2\pi\sum_{k=1}^N n_k\omega_k,\quad\int_{\gamma_j}\omega_k=\delta_{jk}
\end{equation}
where $n_k\in\Z$, $\{\omega_k\}_{k=1}^N$ which is a basis for the first relative cohomology $H^1(M,\del M)$, dual to $\{\gamma_j\}_{j=1}^N$ which are some non-homotopically equivalent loops on $M$, $f$ is a function whose restriction to the boundary $\del M$ is zero and $\delta_{jk}$ is the usual Kronecker delta. 

In our framework we quantify the above statement by showing:
\begin{thm}\label{mainthm3}
There is an $\alpha>0$ such that if $d(\mathcal{C}_1,\mathcal{C}_2)$ is small enough, then for every closed loop $\gamma$ in $M$ it holds that
\begin{equation}
\inf_{k\in\Z}\left|\int_\gamma(X_1-X_2)-2\pi k\right|\leq \frac{C|\gamma|}{\log^\alpha\log\frac1{d(\mathcal{C}_1,\mathcal{C}_2)}},
\end{equation}
where $C=C(K,M,p,\alpha)$.
\end{thm}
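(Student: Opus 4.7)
The plan is to combine the $L^2$-control on $d(X_1-X_2)$ supplied by Theorem \ref{mainthm1} with a homological reduction and a quantitative version of the period quantization argument that underlies \eqref{periods1}. Write $\epsilon(d):=C/\log^\alpha\log\frac{1}{d(\mathcal{C}_1,\mathcal{C}_2)}$ for the rate on the right of \eqref{mainthmest1}.

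First I would reduce to a homology basis. Fix loops $\{\gamma_j\}_{j=1}^N$ as in \eqref{1formrep}, with representatives of bounded length, and set $p_j:=\int_{\gamma_j}(X_1-X_2)$. Any closed loop $\gamma$ is freely homologous to $\sum_j m_j\gamma_j$ for integers $m_j$ with $\sum_j|m_j|\leq C(M)|\gamma|$, and one can pick a 2-chain $\Sigma$ with $\partial\Sigma=\gamma-\sum_j m_j\gamma_j$ of area at most $C(M)|\gamma|$. Stokes' theorem then gives
\begin{equation*}
\int_\gamma(X_1-X_2)-\sum_j m_j p_j=\int_\Sigma d(X_1-X_2),
\end{equation*}
and Cauchy--Schwarz bounds the right-hand side by $C|\gamma|\cdot\|d(X_1-X_2)\|_{L^2}\leq C|\gamma|\epsilon(d)$. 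Setting $k:=\sum_j m_j k_j$, the theorem is thus reduced to establishing $|p_j-2\pi k_j|\leq C\epsilon(d)$ for each basis period with a suitable choice of $k_j\in\mathbb{Z}$.

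For each basis period I would run a perturbed version of the unitary gauge construction used in \cite{gt-gafa} to obtain \eqref{periods1}. In the exact case $\mathcal{C}_1=\mathcal{C}_2$ one builds a unimodular $F:M\to\mathbb{C}$ with $F|_{\partial M}\equiv 1$ solving $dF=-i(X_1-X_2)F$; its existence is equivalent to $p_j\in 2\pi\mathbb{Z}$. Here I would instead construct $F$ on the universal cover $\widetilde{M}$: it is unimodular, equals $1$ on a preimage of $\partial M$, and has multiplicative monodromy $e^{-ip_j}$ around each lift of $\gamma_j$. Pairing $F$ against CGO solutions for $L_1$ and $L_2$ built from the phase functions developed in Section \ref{sec:riemann}, chosen so that the critical-set structure of the phase isolates a representative of $\gamma_j$, I expect an identity whose leading term after Bukhgeim-type stationary phase is a nonzero multiple of $(e^{-ip_j}-1)$, with error controlled by $d(\mathcal{C}_1,\mathcal{C}_2)$ by the same mechanism that produces \eqref{mainthmest1}. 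Taking $k_j$ to be the nearest integer to $p_j/(2\pi)$ then converts the resulting bound on $|e^{-ip_j}-1|$ into $|p_j-2\pi k_j|\leq C\epsilon(d)$.

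The main obstacle is making the CGO machinery sensitive to a topological invariant: the period $p_j$ is a global quantity, whereas the stationary-phase extraction used for Theorem \ref{mainthm1} naturally recovers only pointwise information on $M$. Constructing a holomorphic Morse phase whose critical-point structure concentrates stationary phase along a loop representing $\gamma_j$, while remaining compatible with the approximate orthogonality condition on $\partial M$ discussed before Section \ref{system2} and with the uniform stationary-phase estimates stressed in the introduction, is where the bulk of the work lies; this is also what one would expect to force the final rate to be $\log\log$ rather than $\log$.
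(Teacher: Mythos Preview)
Your Step~1 (the homological reduction via Stokes) is fine as far as it goes, but your Step~2 is not a proof: you yourself flag as the ``main obstacle'' the construction of a holomorphic Morse phase whose stationary-phase expansion concentrates along a loop and extracts the monodromy $e^{-ip_j}-1$. No such construction exists in the paper, and it is not clear how one would build one; the Bukhgeim mechanism recovers pointwise data, not line integrals, and there is no obvious way to make a single critical point see a period. So as written the proposal has a genuine gap.

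The paper avoids this entirely by a much more elementary route that you have overlooked. The functions $F_j=e^{i\alpha_j}$ with $\bar\partial\alpha_j=A_j$ are already available from the machinery of Section~\ref{sec:bdry}, and the proof of Theorem~\ref{mainthm1} already yields a \emph{pointwise} bound $\bigl||F_1|^2-|F_2|^2\bigr|\le\omega(d)$ (via \eqref{hs-est3b} and Sobolev embedding). Setting $\Theta:=F_1F_2^{-1}$, one computes
\[
iX=\frac{\bar\partial\Theta}{\Theta}+\frac{\partial\overline{\Theta}^{-1}}{\overline{\Theta}^{-1}}
=\frac{d\Theta}{\Theta}+\tilde\theta,
\]
where the correction $\tilde\theta$ satisfies $|\tilde\theta|\lesssim\omega(d)$ because $|\Theta-\overline{\Theta}^{-1}|\lesssim\omega(d)$ and the a~priori bounds control one derivative of $\Theta$. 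Since $\Theta$ is nowhere vanishing on $M$, the logarithmic-derivative integral $\int_\gamma d\Theta/\Theta$ lies in $2\pi i\,\mathbb{Z}$ for every closed loop $\gamma$ (this is just the argument principle, proved in the paper by an ODE/Picard--Lindel\"of argument along $\gamma$). Hence
\[
\inf_{k\in\mathbb{Z}}\Bigl|\int_\gamma X-2\pi k\Bigr|\le\Bigl|\int_\gamma\tilde\theta\Bigr|\le|\gamma|\,\omega(d),
\]
with no homology reduction and no new CGO solutions needed. The whole argument is a corollary of the $C^0$ control already obtained inside the proof of Theorem~\ref{mainthm1}.
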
 

The above result is a consequence of Theorem \ref{mainthm1} and the Picard-Lindelöf Theorem. Some precaution is needed when defining relevant functions on $M$ since in our case $X_1-X_2$ is not necessarily closed, compare with \cite{gt-gafa}. This introduces extra complications and a suitable expression for $X_1-X_2$ is derived to quantify its distance from an exact gauge (see Section \ref{holonomy}).

We also refer to \cite{gtsurvey} for more background on the Calder\'on inverse problem for Schrödinger operators in dimension 2. In particular, \cite{gt-cp1} proves uniqueness for the usual Schrödinger operator $\Delta_g+q$ on a Riemann surface and \cite{gt-cppd} handles the corresponding partial data case. For similar results in Euclidean domains, see \cite{Novikov1987, Imanuvilov2010, Imanuvilov2012}. Related stability estimates can also be found in \cite{Santacesaria2013, Santacesaria2015}. A constructive method for the reconstructing an isotropic conductivity on a Riemann surface was first obtained in \cite{Henkin2011}.

\section{A primer on Riemann surfaces}\label{sec:riemann}
We aim here to give only a brief introduction to the geometry of compact Riemann surfaces and refer to the books \cite{Farkas1992, forster, jost-crs} for more comprehensive treatments.

For a closed Riemann surface $(M, g)$ we denote by $\Lambda^k(M), k=0,1,2$ the bundles of complex-valued $k$-forms on $M$. In particular, $\Lambda^0(M)$ contains functions on $M$ and $\Lambda^1( M)=\C T^*M$ is the complexified cotangent bundle on $M$, containing 1-forms. 

The Hodge star induces a natural splitting (in the sense of vector spaces) of the cotangent bundle
\[
\Lambda^1(M)=\Lambda^{1,0}(M)\oplus\Lambda^{0,1}(M),
\]
where $\Lambda^{1,0}(M)=T_{1,0}^*M:=\ker(\star+\i)$ and $\Lambda^{0,1}(M)=T_{0,1}^*M:=\ker(\star-\i)$ are eigenspaces to $\star$ corresponding to it's eigenvalues $\pm\i$. We say that a 1-form belonging to $\Lambda^{p,q}(M)$ is of type $(p,q)\in\{(1,0),(0,1)\}$. 
In holomorphic coordinates, $z=x+\i\, y$ in a chart $(U,\phi)$, the Hodge star $\star:\Lambda^k(M)\to\Lambda^{2-k}(M),$ acts according to: 
\[
\star(u\,\d z+v\,\d\bar z)=-\i\, u\,\d z+\i\, v\,\d\bar z,
\]
where 
$u,v$ are functions. So locally $\Lambda^{1,0}(M)$ is spanned by $\d z$ while $\Lambda^{0,1}(M)$ is spanned by $\d\bar z$. We have the natural projections
\begin{align*}
\pi_{1,0}:\Lambda^1(M)\to\Lambda^{1,0}(M),\quad\omega\mapsto\omega_{1,0}=\pi_{1,0}\omega,\\
\pi_{0,1}:\Lambda^1(M)\to\Lambda^{0,1}(M),\quad\omega\mapsto\omega_{0,1}=\pi_{0,1}\omega,
\end{align*}
and by definition $\star\omega_{1,0}=-\i\,\omega_{1,0}$ and $\star\omega_{0,1}=\i\,\omega_{0,1}$. 

A Hermitian inner product can be defined on the vector space $\Lambda^k(M)$ according to 
\begin{equation}\label{ip1forms}
\langle \lambda,\eta\rangle_{L^2(\Lambda^k(M))}:=\int_{M}\lambda\wedge\star\bar\eta,\quad\lambda,\eta\in\Lambda^k(M).
\end{equation}

We will often assume that our functions and forms belong to certain Sobolev spaces. Recall that for non-negative integers $k$ and $1\leq p\leq\infty$, a function $f$ is said to belong to the Sobolev space $W^{k,p}(M)$ if it is $k$ times weakly differentiable and all partial derivatives up to order $k$ belong to $L^p(M)$. For $k=2$ we use the common notation $H^k(M):=W^{k,2}(M)$, or $H^s(M)$ when considering non-integers $s=k$, and for $k=0$ we have $L^p(M)=W^{0,p}(M)$. For a more thorough discussion on Sobolev spaces we refer to e.g.\ Section 1.3 of \cite{Schwarz1995}, that also covers the spaces $W^{k,p}(\Lambda^l(M))$ that we also consider in the cases $l=1,2$.

\subsection{Cauchy-Riemann operators on $M$}\label{subseq:cr}
Next we will introduce the Cauchy Riemann operators $\del,\bar\del$ as mappings of $k$-forms to $(k+1)$-forms, $k=0,1$. 

On functions their action is defined by
\[
\del f := \pi_{1,0}\d f,\quad\bar\del f := \pi_{0,1}\d f.
\]
In holomorphic coordinates this is simply
\[
\del f := \del_z f\,\d z,\quad\dbar f := \del_{\bar z}f\,\d\bar z,
\]
where 
\[
\del_z=\frac12\left(\frac{\del}{\del x}-\i\,\frac{\del}{\del y}\right),\quad\del_{\bar z}=\frac12\left(\frac{\del}{\del x}+\i\,\frac{\del}{\del y}\right)
\]
are the usual Wirtinger derivatives. The space of holomorphic functions over $M$ is denoted by $\mathcal{H}(M)$ and consists of functions $f$ that satisfy $\dbar f=0$.

On 1-forms we define
\[
\del\omega:=\d\pi_{0,1}\omega=\d\omega_{0,1},\quad\dbar\omega:=\d\pi_{1,0}\omega=\d\omega_{1,0}.
\]
In coordinates this is
\[
\del(u\,\d z+v\,\d\bar z)=\del v\wedge\d\bar z,\quad\dbar(u\,\d z+v\,\d\bar z)=\dbar u\wedge\d z.
\]
It is clear that $\d=\del+\dbar$ holds for both functions and 1-forms. 
The adjoints of $\del$ and $\dbar$ are simply given by $\del^*=\i\star\dbar$ and $\dbar^*=-\i\star\del$ respectively. 
We can now define the Laplacian of a function $f$ on $M$ by
\[
\Delta f:=-2\i\star\dbar\del f=\delta\d f
\]
where $\delta=\d^*$ is the codifferential, i.e. the adjoint of $\d$ with respect to the inner product, and the $\star$ is the induced Hodge star that maps 2-forms to 0-forms.

\subsection{Existence of suitable phase functions on $M$}
We will now construct functions that are holomorphic and Morse, with uniformly bounded from below Hessian outside a neighborhood of their stationary points. These functions will be used as phase functions in latter arguments that allow us to estimate the degeneracy near stationary points. This must be taken into account when deriving correct remainder estimates in e.g.\ stationary phase expansions. 

Start out with any $\Phi\in\mathcal{H}(M)$ which is Morse, meaning that if $\hat p\in M$ is a stationary point, $\del\Phi(\hat p)=0$, then the Hessian $\del_p^2\Phi(\hat p)\neq0$. For the construction of such functions see \cite{gt-cp1}. Suppose that $\{p_1,\dots,p_n\}$ is the set of stationary points of $\Phi$ and for $k=1,\dots,n$ denote by $p_{j,k}, j=1, \dots,m_k$ the set of points for which the values $\Phi(p_{j,k})$ coincide with the critical values $\Phi(p_k)$. Then we have the following lemma:

\begin{lem}\label{l:qPhip} Let $\Phi$ be a holomorphic Morse function on a compact Riemann surface $M$ with boundary, having critical points $\{p_k\}_{k=1}^n\subset M$, and let 
\[
P_{cv}=\{p_{j,k}\in M;\Phi(p_{j,k})=\Phi(p_k), j=1,\dots,m_k,k=1,\dots,n\}
\]
be the set of points where the values of $\Phi$ coincides with a critical value. For any $\delta>0$ we let $N_\delta$ be a neighborhood of $P_{cv}$ defined by:
\[
N_\delta=\bigcup_{j,k} B_{j,k,\delta},\quad B_{j,k,\delta}=\phi^{-1}(B(\phi(p_{j,k}),\delta)),
\]
where $(U,\phi)$ are charts such that $\phi:U\to\C$ and 
\[
B(\phi(p_{j,k}),\delta)=\{z\in\C;|z-z_{j,k}|<\delta, z_{j,k}=\phi(p_{j,k})\}\subset\phi(U).
\]
Then for any $\hat p\in M\setminus N_\delta$ there exists a holomorphic Morse function $\Phi_{\hat p}$ with a critical point at $\hat p$ such that at all critical points $p$ of $\Phi_{\hat p}$ there is a $c>0$ such that 
\[
|\del_p^2\Phi_{\hat p}(p)|\geq c\delta^4,
\]
uniformly in $\hat p\in M\setminus N_\delta$.
\end{lem}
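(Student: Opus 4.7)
The plan is to take the holomorphic Morse function $\Phi$ produced at the start of the section and consider the explicit quadratic modification
\[
\Phi_{\hat p}(p) := \tfrac12\bigl(\Phi(p) - \Phi(\hat p)\bigr)^{2},
\]
which is manifestly holomorphic. Since
\[
\partial\Phi_{\hat p}(p) = \bigl(\Phi(p)-\Phi(\hat p)\bigr)\,\partial\Phi(p),
\]
the critical set of $\Phi_{\hat p}$ is the union $\Phi^{-1}(\Phi(\hat p))\cup\{p_{1},\ldots,p_{n}\}$, and this union is disjoint because the hypothesis $\hat p\notin N_{\delta}$ forces $\Phi(\hat p)$ not to be a critical value of $\Phi$. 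In particular $\hat p$ itself is a critical point. The Morse property can then be checked from
\[
\partial^{2}\Phi_{\hat p}(p) = \bigl(\partial\Phi(p)\bigr)^{2} + \bigl(\Phi(p)-\Phi(\hat p)\bigr)\,\partial^{2}\Phi(p):
\]
at each $p_{k}$ the first summand vanishes and the second is nonzero because $\Phi$ is Morse and $\Phi(p_{k})\neq\Phi(\hat p)$, while at any $q\in\Phi^{-1}(\Phi(\hat p))$ the second summand vanishes and the first is nonzero because $q$ cannot itself be a critical point of $\Phi$ (otherwise $\hat p$ would belong to $N_{\delta}$).

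For the quantitative lower bound I will upgrade the metric separation $d(\hat p,p_{j,k})\geq\delta$ into two scalar estimates, uniform in $\hat p\in M\setminus N_{\delta}$:
\begin{enumerate}[(i)]
\item $|\partial\Phi(\hat p)|\gtrsim\delta$, from the linear vanishing of $\partial\Phi$ at each $p_{k}$ together with a compactness argument away from the critical points;
\item $|\Phi(\hat p)-\Phi(p_{k})|\gtrsim\delta^{2}$ for every critical value $\Phi(p_{k})$, from the local quadratic model $\Phi(z)-\Phi(p_{k})\sim\tfrac12\partial^{2}\Phi(p_{k})(z-z_{k})^{2}$ near each critical preimage, combined with compactness on $M\setminus N_{\delta}$.
\end{enumerate}
Plugging these into the Hessian formula: at $p=\hat p$ one uses (i); at $p=p_{k}$ one uses (ii) together with the Morse condition on $\Phi$; and at the remaining critical points $q\in\Phi^{-1}(\Phi(\hat p))\setminus\{\hat p\}$ one first bootstraps (ii) through the local quadratic model to conclude $d(q,p_{k})\gtrsim\delta$ for the nearest critical point $p_{k}$ of $\Phi$, and then uses the linear behaviour of $\partial\Phi$ around $p_{k}$ to get $|\partial\Phi(q)|\gtrsim\delta$.

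The main technical step is (ii): the purely metric separation of $\hat p$ from the preimage set $\Phi^{-1}(\Phi(p_{k}))$ has to be promoted to a lower bound on $|\Phi(\hat p)-\Phi(p_{k})|$ that degrades only polynomially in $\delta$. Near a critical preimage the vanishing is exactly quadratic, which gives a $\delta^{2}$ lower bound locally; the global statement on $M\setminus N_{\delta}$ then follows by a compactness argument together with the observation that $\Phi-\Phi(p_{k})$ vanishes only on $P_{cv}\subset N_{\delta}$. Assembling the three Hessian estimates delivers the desired polynomial lower bound at every critical point of $\Phi_{\hat p}$, uniformly in $\hat p\in M\setminus N_{\delta}$, as claimed.
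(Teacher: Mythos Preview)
Your approach is essentially identical to the paper's: the same construction $\Phi_{\hat p}=\tfrac12(\Phi-\Phi(\hat p))^2$ (the paper omits the $\tfrac12$), the same dichotomy for the critical set into $\{p_k\}$ versus $\Phi^{-1}(\Phi(\hat p))$, and the same local Taylor/compactness reasoning for the Hessian bounds. Your bootstrapping at the level-set points $q\in\Phi^{-1}(\Phi(\hat p))$ is in fact slightly sharper: using the quadratic upper bound $|\Phi(z)-\Phi(p_k)|\lesssim|z-z_k|^2$ near $p_k$ (rather than the paper's Lipschitz bound) yields $d(q,p_k)\gtrsim\delta$ and hence $|\partial\Phi(q)|^2\gtrsim\delta^2$, whereas the paper obtains only $d(q,p_k)\gtrsim\delta^2$ and Hessian $\gtrsim\delta^4$; either bound suffices for the lemma as stated.
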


\begin{proof}
The sought after function will be defined by
\[
\Phi_{\hat p}(p):=(\Phi(p)-\Phi(\hat p))^2, \quad \hat p\in M\setminus N_\delta.
\]
Then
\[
\del\Phi_{\hat p}(p)=2(\Phi(p)-\Phi(\hat p))\del\Phi(p),
\]
so clearly $\del\Phi_{\hat p}(\hat p)=0$, and we will show that $\Phi_{\hat p}$ is Morse. Observe that, in a coordinate system $(U, \phi)$ containing $p = \phi(z)$
\[
\del^2_z\Phi_{\hat p}(\phi^{-1}(z))=2(\del_z\Phi(\phi^{-1}(z)))^2+2(\Phi(\phi^{-1}(z))-\Phi(\hat p))\del_z^2\Phi(\phi^{-1}(z)).
\] 
Suppose that $p\in M$ is a point such that
\[
0=\del\Phi_{\hat p}(p)=2(\Phi(p)-\Phi(\hat p))\del\Phi(p).
\]
Then, since $\hat p$ is bounded away from critical values of $\Phi$, we have the following two mutually exclusive cases 
\begin{enumerate}
\item $\del\Phi(p)=0$ in which case $p=p_k$ for some $k\in\{1,\dots,n\}$ or,
\item $\Phi(p)=\Phi(\hat p)$.
\end{enumerate} 
In case 1, we have with $z_k=\phi(p_k)$,
\[
\del^2_z\Phi_{\hat p}(\phi^{-1}(z_k))=2(\Phi(\phi^{-1}(z_k))-\Phi(\phi^{-1}(\hat z)))\del_z^2\Phi(\phi^{-1}(z_k)),
\] 
and since $\Phi$ is Morse there is a holomorphic chart in which $\del^2_z\Phi(\phi^{-1}(z_k))=a_k\neq0$. Also since $\hat p\notin N_\delta, \Phi(p_k)-\Phi(\hat p)\neq0$ and furthermore (for small enough $\delta>0$ and $\hat p$ close enough to $p_k$) we have in local coordinates, 
\[
|\Phi(\phi^{-1}(z_k))-\Phi(\phi^{-1}(\hat z))|=|a_k||z_k-\hat z|^2+O(|z_k-\hat z|^3).
\]
(Observe that $|\Phi(p_k)-\Phi(\hat p)|\to0$ only if $\hat p\to p_{j,k}$ for some $p_{j,k}\in P_{cv}$, and thus $|\Phi(p_k)-\Phi(\hat p)|=|\Phi(p_k)-\Phi(p_{j,k})+\Phi(p_{j,k})-\Phi(\hat p)|=|\Phi(p_{j,k})-\Phi(\hat p)|$.)
Since we only have finitely many critical points we can thus choose an $a>0$, such that in a coordinate where $p= \phi(z)$
\[
|\del_z^2\Phi_{\hat p}(\phi^{-1}(z))|>a\delta^2,\quad \text{for all }\ \hat p\in M\setminus N_\delta, \text{ for } p \in M: \del\Phi(p)=0.
\]
In case 2, when $\Phi(p)=\Phi(\hat p)$ it follows that $p\neq p_k$ for any $k$, since $\hat p\notin N_\delta$. Furthermore,
\[
\del^2_z\Phi_{\hat p}(\phi^{-1}(z))=2(\del_z\Phi(\phi^{-1}(z)))^2.
\]
Clearly the right hand side approaches 0 only when $p\to p_k$ for some $k\in\{1,\dots,n\}$ and we have by a similar argument as in case 1 that
\begin{multline*}
|z-z_k|\geq\frac{1}{c}|\Phi(\phi^{-1}(z))-\Phi(\phi^{-1}(z_k))|=\frac{1}{c}|\Phi(\phi^{-1}(\hat z))-\Phi(\phi^{-1}(z_k))|\\=\frac{|a_k||\hat z-z_k|^2+O(|\hat z-z_k|^3)}{c},
\end{multline*}
where $c>0$ is the Lipschitz constant for $\Phi\circ\phi^{-1}$. So the lower bound $|\hat z-z_k|\geq\delta$ will yield a lower bound on $|z-z_k|\gtrsim\delta^2$, hence near $p_k$ we have (in local coordinates)
\[
|\del_z\Phi(\phi^{-1}(z))|=2|a_k||z-z_k|+O(|z-z_k|^2)\geq\sqrt{b}|\hat z-z_k|^2=\sqrt b \delta^2.
\]
Thus for $p$ near $p_k$ we can again choose $b>0$ such that
\[
|\del^2_z\Phi_{\hat p}(\phi^{-1}(z))|>b\delta^4,\quad \text{for all }\ \hat p\in M\setminus N_\delta, \text{ for } p\in M: \Phi(p)=\Phi(\hat p),
\]
 and this holds uniformly in $\hat p\in M\setminus N_\delta$.
\end{proof}

\begin{rem} When the Riemann surface M is of a particularly simple type, e.g.\ if it is a equipped with a global holomorphic coordinate $z$ (such as when $M$ is just a domain in $\C$) we do not need the construction in the above lemma. Since in the latter case it holds that for every $\hat z=\phi(\hat p)$, $\Phi(z)=(z-\hat z)^2$ is a Morse holomorphic function with a critical point at $\hat z$.
\end{rem}

We are going to use the method of stationary phase with the above constructed phase function in order to later derive estimates. For convenience we replace $\delta$ by $\sqrt{\delta}$ in the above lemma, i.e.\ we consider the phase function $\Phi=\Phi_{\hat p}$ on $M\setminus N_{\sqrt\delta}$, where $|\del_p^2\Phi_{\hat p}|\geq c\delta^2$. 
 Recall that $C_0^\infty$ as usual denotes smooth and compactly supported functions/form. The dependence on the parameters $h$ and $\delta$ will be important.
\begin{lem}[Stationary phase]\label{l:stph}
Suppose $\psi$ is a smooth function and $K\subset\C$ is a set containing only one critical point $\hat z$ of $\psi$ and that $|\del_z^2\psi(\hat z)|\geq \delta^2>0$. Then for every $u\in C_0^\infty(K)$,
\begin{enumerate}
\item
\[
\left|\int u(z)\e^{2\i\psi(z)/h}\,\d z\,\d\bar z\right|\leq\frac{Ch}{\delta}\|u\|_{W^{2,\infty}(K)},\quad\text{and}
\]
\item
\[
\left|\int u(z)\e^{2\i\psi(z)/h}\,\d z\,\d\bar z-\frac{ch}{\delta}u(\hat z) \right| \leq\frac{Ch^2}{\delta^{7}}\|u\|_{W^{4,\infty}(K)}.
\]
\end{enumerate}
The constants $C>0, c>0$ are uniformly bounded with respect to $\hat z$.
\end{lem}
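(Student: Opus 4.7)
The overall strategy is the classical stationary-phase method with careful tracking of the $\delta$-dependent constants. I would fix a smooth cutoff $\chi\in C_0^\infty(\C)$ equal to $1$ on $B_r(\hat z)$ and supported in $B_{2r}(\hat z)$ for some $r>0$ to be optimized, and split $I=I_1+I_2$ with $I_1=\int \chi u\,\e^{2\i\psi/h}\,\d z\,\d\bar z$ and $I_2=\int(1-\chi)u\,\e^{2\i\psi/h}\,\d z\,\d\bar z$. On the support of $1-\chi$, Taylor's theorem applied to $\d\psi$ at $\hat z$ (using $\d\psi(\hat z)=0$ together with $|\del_z^2\psi(\hat z)|\geq\delta^2$) yields the lower bound $|\d\psi(z)|\gtrsim \delta^2|z-\hat z|$. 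I would then integrate by parts against a first-order differential operator $L$ with $L\,\e^{2\i\psi/h}=\e^{2\i\psi/h}$ (obtained by dividing by $\d\psi$): each application of $L^*$ gains a factor $h$, loses a factor of $|\d\psi|^{-1}\lesssim (\delta^2|z-\hat z|)^{-1}$, and costs one derivative of $u$. Two applications suffice to prove (1) after bounding $I_1$ trivially by $Cr^2\|u\|_{L^\infty}$ and balancing the two contributions by choosing $r\sim \sqrt h/\delta$.

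For the sharper expansion in (2), I would refine the near-zone contribution $I_1$ via a quantitative Morse normal form. Since the Hessian at $\hat z$ has magnitude at least $\delta^2$, one can produce a smooth change of variables $z\mapsto w=\Psi(z)$ on a small ball centered at $\hat z$ that reduces $\psi-\psi(\hat z)$ to a pure quadratic form $Q(w)$ whose coefficients have size comparable to $\delta^2$. In the new variables $I_1$ becomes an explicit 2-dimensional Gaussian integral, and Taylor-expanding the pulled-back amplitude $(u\,|\det D\Psi^{-1}|)\circ\Psi^{-1}$ around $w=0$ up to fourth order produces the leading term $ch\delta^{-1}u(\hat z)$; odd-order Gaussian moments vanish by parity, while the even-order ones together with the fourth-order Taylor remainder and the higher derivatives of the Morse chart (each contributing one inverse power of $\del_z^2\psi\sim\delta^2$) are bounded by $Ch^2\delta^{-7}\|u\|_{W^{4,\infty}}$. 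Four rather than two integrations by parts on the far zone give a remainder that is absorbed into this bound.

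The main technical obstacle I anticipate is the quantitative Morse lemma itself: one needs the diffeomorphism $\Psi$ and its derivatives up to order four uniformly bounded in $\delta$ on a ball of radius $\sim \sqrt h/\delta$, which is precisely what keeps the constants uniform in $\hat z$. An alternative that avoids an explicit coordinate change is to Taylor-expand the phase, writing $\psi(z)=\psi(\hat z)+Q(z-\hat z)+R(z-\hat z)$ with $|R|\lesssim |z-\hat z|^3$, and to pull the cubic remainder out of $\e^{2\i\psi/h}$ through the elementary identity $\e^{\i T/h}=1+\i T/h+O(T^2/h^2)$; the resulting pure-Gaussian integrals can be evaluated in closed form and lead to the same bookkeeping. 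Either way, the accumulated inverse powers of $\del_z^2\psi$ together with the Gaussian-measure scaling account for the exponent $\delta^{-7}$ in the remainder of (2).
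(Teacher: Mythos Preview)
Your proposal is a correct and standard outline of the stationary-phase method, but it is far more than what the paper does. The paper's entire proof is a one-line citation: ``This follows from \cite{LHLPDO1}, Theorem 7.7.5,'' i.e.\ it simply invokes H\"ormander's general stationary-phase theorem and reads off the $\delta$-dependence from the explicit Hessian factors there. You, by contrast, are essentially re-deriving that theorem from scratch via the near/far splitting, iterated integration by parts against $L=\tfrac{h}{2\i}|\nabla\psi|^{-2}\nabla\psi\cdot\nabla$, and a quantitative Morse lemma on the near zone.

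Both routes lead to the same place; yours is self-contained and makes the $\delta$-bookkeeping visible, while the paper's buys brevity at the cost of asking the reader to chase the constants through H\"ormander's proof. The one place where your sketch would need tightening is precisely the point you flag: controlling the derivatives of the Morse chart (or of the inverse Hessian in the alternative Taylor-expansion route) uniformly on a ball whose radius shrinks with $h$ and $\delta$, so that the accumulated powers of $(\del_z^2\psi)^{-1}$ really match the claimed $\delta^{-7}$. H\"ormander's theorem packages exactly this, with the remainder after $k$ terms bounded by $h^k$ times a sum of $|\det\psi''|^{-1/2-j}$-weighted seminorms, which is where the paper is implicitly pulling the exponent from.
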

\begin{proof}
This follows from \cite{LHLPDO1}, Theorem 7.7.5.
\end{proof}

\subsection{The inverses of $\del$ and $\dbar$}\label{subseq:crinv}
Section 2 in \cite{gt-gafa} contain lemmas regarding the construction and boundedness of right inverses of the Cauchy-Riemann operators. These results ensures that the constructed solutions to the Dirac-systems that are considered in the paper are well-behaved. As we will use a very similar approach in Section \ref{sec:systems}, we recite some of these essential lemmas below.

\begin{lem}[Right inverse to $\dbar$, \cite{gt-gafa}]\label{l:dbarinv}
There exists an operator
\[
\dbar^{-1}:C_0^{\infty}(\Lambda^{1,0}(M))\to C^{\infty}(M)
\]
such that
\begin{enumerate}
\renewcommand{\labelenumi}{(\roman{enumi})}

\item $\dbar\dbar^{-1}\omega=\omega$ for all $\omega\in C_0^{\infty}(\Lambda^{1,0}(M))$.

\item If $\chi_j\in C_0^{\infty}(M)$ are supported in complex charts $U_j$, bi-holomorphic to a bounded open set $\Omega\subset\C$ with complex coordinate $z$, and such that $\chi=\sum\chi_j$ is equal to 1 on $M$, then as operators
\begin{equation}\label{dbarinv}
\dbar^{-1}\chi=\sum\hat\chi_j\bar T\chi_j+K,
\end{equation}
where $\hat\chi_j\in C_0^{\infty}(U_j)$ are such that $\hat\chi_j\chi_j=\chi_j$, $K$ has smooth kernel on $M\times M$ and $\bar T$ is given in the complex coordinate $z\in U_i$ by

\begin{equation}\label{cauchyop1} \bar T(f\,\d\bar z)=\frac1\pi\int_\C\frac{f(\zeta)}{z-\zeta}\,\d\zeta\wedge\d\bar\zeta.\end{equation}

\item $\dbar^{-1}$ is bounded from $L^p(\Lambda^{1,0}(M))$ to $W^{1,p}(M)$ for all $p>1$.

\end{enumerate}
\end{lem}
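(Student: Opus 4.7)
The plan is to construct $\dbar^{-1}$ by patching together local Cauchy--Pompeiu operators and absorbing the patching error into a smoothing correction. First, in a single chart $(U_j,\phi_j)$ with holomorphic coordinate $z$, the $\dbar$ operator on functions acts as $\del_{\bar z}$, and the distributional identity $\del_{\bar z}(1/(\pi z))=\delta_0$ gives that the local Cauchy transform $\bar T$ defined by \eqref{cauchyop1} satisfies $\dbar \bar T(f\,\d\bar z)=f\,\d\bar z$ on compactly supported test forms. Thus $\bar T$ is a right inverse in each chart.

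Next, I would choose a partition of unity $\{\chi_j\}$ subordinate to the charts $U_j$ together with cut-offs $\hat\chi_j\in C_0^\infty(U_j)$ satisfying $\hat\chi_j\chi_j=\chi_j$, and set
\[
S\omega:=\sum_j\hat\chi_j\bar T(\chi_j\omega).
\]
Applying $\dbar$ and using Leibniz,
\[
\dbar S\omega=\sum_j\chi_j\omega+\sum_j(\dbar\hat\chi_j)\wedge\bar T(\chi_j\omega)=\omega+R\omega.
\]
Since $\hat\chi_j\equiv1$ on $\mathrm{supp}(\chi_j)$, the support of $\dbar\hat\chi_j$ is disjoint from $\mathrm{supp}(\chi_j)$, so the Schwartz kernel of $R$, morally $(\dbar\hat\chi_j)(z)\chi_j(\zeta)/(z-\zeta)$, avoids the diagonal singularity of the Cauchy kernel and is smooth on $M\times M$. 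Hence $R$ is a smoothing operator, and a Neumann/Fredholm argument produces a smoothing correction $K'$ with $\dbar^{-1}:=S+K'$ satisfying $\dbar\dbar^{-1}=\mathrm{Id}$; this gives (i), and (ii) follows since the combined error can be absorbed into the claimed smoothing kernel $K$.

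For (iii), I would invoke the classical $L^p$--$W^{1,p}$ boundedness of the planar Cauchy transform for $1<p<\infty$: the $z$-derivative of the Cauchy transform is the Beurling--Ahlfors transform, a Calder\'on--Zygmund operator bounded on $L^p(\C)$, while the $\bar z$-derivative recovers the argument. The biholomorphic chart transitions and the cut-offs preserve these bounds on each summand $\hat\chi_j\bar T(\chi_j\,\cdot\,)$, and the smoothing remainder $K$ is bounded between arbitrary Sobolev spaces, yielding the stated $L^p\to W^{1,p}$ bound.

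The main obstacle I anticipate is a bookkeeping issue rather than an analytic one: verifying that $R$ is genuinely globally smoothing requires checking that the local kernels $(\dbar\hat\chi_j)(z)\chi_j(\zeta)/(z-\zeta)$ assemble into a smooth kernel on $M\times M$ across all chart overlaps. The support condition $\hat\chi_j\chi_j=\chi_j$ keeps one uniformly away from the diagonal, but the transition between charts and the resulting invertibility of $\mathrm{Id}+R$ modulo smoothing is the step where one has to be careful with the geometry of $M$.
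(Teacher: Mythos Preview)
The paper does not give its own proof of this lemma; it is quoted verbatim from \cite{gt-gafa} and only used as a black box. So there is no in-paper argument to compare against, and your sketch is precisely the standard parametrix construction that underlies such results.

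Your outline is sound. One small clarification: the step you flag at the end --- passing from $\dbar S=\mathrm{Id}+R$ with $R$ smoothing to an honest right inverse --- is not really a chart-overlap issue. Each summand of $R$ has kernel compactly supported in $U_j\times U_j$, so the global kernel is smooth on $M\times M$ without any patching subtlety. The genuine content is that $\mathrm{Id}+R$ need not be invertible, only Fredholm of index zero; to correct on its finite-dimensional cokernel you must solve $\dbar u=e$ for finitely many smooth $(0,1)$-forms $e$. This is where the geometry enters: a compact Riemann surface with nonempty boundary has $H^{0,1}_{\dbar}(M)=0$ (it is Stein, or argue via Hodge--Morrey decomposition with boundary conditions), so $\dbar$ is surjective and the correction exists. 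Once you invoke that vanishing, the rest of your argument --- local Calder\'on--Zygmund bounds for the Beurling transform giving $L^p\to W^{1,p}$, plus arbitrary Sobolev boundedness of the smoothing remainder --- goes through as you wrote.
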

So by $(ii)$, the inverse can be expressed by the usual Cauchy operator, which is called $\bar T$ here, plus a smoothing term, in local coordinates. A similar result for $\dbar^{*}$ is given by

\begin{lem}[Right inverse to $\dbar^{*}$, \cite{gt-gafa}]\label{l:dbar*inv}
Let $\dbar^{*}:W^{1,p}(\Lambda^{0,1}(M))\to L^{p}(M)$, then there exists an operator
\[
\dbar^{*-1}:C_0^{\infty}(M)\to C^{\infty}(\Lambda^{0,1}(M))
\]
such that
\begin{enumerate}
\renewcommand{\labelenumi}{(\roman{enumi})}

\item $\dbar^{*}\dbar^{*-1}\varphi=\varphi$ for all $\varphi\in C_0^{\infty}(M)$.

\item If $\chi_j\in C_0^{\infty}(M)$ are supported in complex charts $U_j$, bi-holomorphic to a bounded open set $\Omega\subset\C$ with complex coordinate $z$, and such that $\chi=\sum\chi_j$ is equal to 1 on $M$, then as operators
\begin{equation}\label{dbarinv}
\dbar^{*-1}\chi=\sum\hat\chi_j T\chi_j+K,
\end{equation}
where $\hat\chi_j\in C_0^{\infty}(U_j)$ are such that $\hat\chi_j\chi_j=\chi_j$, $K$ has smooth kernel on $M\times M$ and $T$ is given in the complex coordinate $z\in U_i$ by
\begin{equation}\label{cauchyop2}
Tf(z)=\left(\frac1\pi\int_\C\frac{f(\zeta)}{\bar z-\bar\zeta}\,\d\zeta\wedge\d\bar\zeta\right)\d\bar z.
\end{equation}

\item $\dbar^{*-1}$ is bounded from $L^p(M)$ to $W^{1,p}(\Lambda^{0,1}(M))$ for all $p>1$.

\end{enumerate}
\end{lem}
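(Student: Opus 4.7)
The statement is the precise analogue of Lemma~\ref{l:dbarinv} with the roles of $(\dbar,\bar T)$ and $(\dbar^{*},T)$ interchanged, and I would prove it by the same method. The conjugate Cauchy kernel $1/(\bar z-\bar\zeta)$ takes the role of $1/(z-\zeta)$ because, in a holomorphic chart, $\dbar^{*}$ acts essentially as $\del_{z}$ whereas $\dbar$ acted essentially as $\del_{\bar z}$.

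\textbf{Local inversion.} Using $\dbar^{*}=-\i\star\del$ together with the coordinate formulas of Section~\ref{subseq:cr}, a direct computation in a holomorphic chart with conformal factor $\lambda^{2}$ gives
\[
\dbar^{*}(v\,\d\bar z) = -2\lambda^{-2}\del_{z}v,
\]
so inverting $\dbar^{*}$ locally reduces to solving a $\del_{z}$-equation. The standard distributional identity says that $\pi^{-1}\del_{\bar z}[1/(z-\zeta)]$ is the Dirac mass at $\zeta$; complex conjugation therefore yields that $\pi^{-1}\del_{z}[1/(\bar z-\bar\zeta)]$ is also the Dirac mass at $\zeta$. A standard integration-by-parts argument now shows that the operator $T$ of~\eqref{cauchyop2} is, up to a multiplicative constant that may be absorbed into the normalization, a right inverse of $\dbar^{*}$ on $C_{0}^{\infty}$ of a single chart.

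\textbf{Patching.} With the cutoffs $\chi_{j},\hat\chi_{j}$ of the statement ($\hat\chi_{j}\chi_{j}=\chi_{j}$), set
\[
A\varphi := \sum_{j} \hat\chi_{j}\,T(\chi_{j}\varphi).
\]
Applying $\dbar^{*}$ and expanding by Leibniz produces the desired identity contribution $\sum_{j}\chi_{j}\varphi=\chi\varphi=\varphi$, plus an error operator $E$ consisting of those terms in which $\dbar^{*}$ falls on $\hat\chi_{j}$. Since $\hat\chi_{j}\equiv 1$ on $\mathrm{supp}\,\chi_{j}$, the integral kernel of $E$ is the Cauchy kernel multiplied by functions whose joint supports avoid the diagonal of $M\times M$; hence $E$ has smooth Schwartz kernel and is smoothing. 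A Neumann-series correction (or the standard ellipticity argument for $\dbar^{*}$) then produces $\dbar^{*-1}=A+K$ satisfying~(i) and~(ii), with $K$ smoothing.

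\textbf{Boundedness.} Assertion~(iii) follows from the Calder\'on--Zygmund fact that the conjugate Cauchy transform $f\mapsto \frac1\pi\int f(\zeta)/(\bar z-\bar\zeta)\,\d\zeta\wedge\d\bar\zeta$ is bounded $L^{p}(\C)\to W^{1,p}(\C)$ for every $p>1$ (its $z$- and $\bar z$-derivatives are, up to constants, the identity and the Beurling transform, both $L^{p}$-bounded in this range), together with the smoothness of the cutoffs and the smoothing of $K$. The only technical delicacy in the whole argument is the verification that the Cauchy singularity is annihilated off the diagonal in $E$, which is precisely what the condition $\hat\chi_{j}\chi_{j}=\chi_{j}$ is designed to guarantee.
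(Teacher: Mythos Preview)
Your sketch is correct and follows the standard parametrix construction for a first-order elliptic operator: local inversion by the conjugate Cauchy kernel, patching via a partition of unity, smoothing remainder off the diagonal thanks to $\hat\chi_j\equiv 1$ on $\mathrm{supp}\,\chi_j$, and Calder\'on--Zygmund/Beurling bounds for~(iii). The only point I would tighten is the passage from parametrix to genuine right inverse: a Neumann series needs smallness of the error, which is not automatic; the cleaner route is the Fredholm/Hodge argument (surjectivity of $\dbar^{*}$ onto the appropriate complement, or equivalently solvability of $\del u=\varphi\,\d z$ in local coordinates and gluing), which is what your parenthetical ``standard ellipticity argument'' points to.

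For comparison, the paper gives no proof of this lemma at all: it is quoted verbatim from \cite{gt-gafa} and the text simply says ``For the proof we again refer to \cite{gt-gafa}.'' So your write-up is already more detailed than what the paper provides, and it matches the construction in the cited source.
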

Here we again observe that $(ii)$ says that the inverse can be expressed as a Cauchy-type operator plus a smoothing term, in local coordinates. For the proof we again refer to \cite{gt-gafa}. 
The main use of Lemma \ref{l:dbarinv} and \ref{l:dbar*inv} in \cite{gt-gafa} is to prove Lemma \ref{l:dbarpsiinv} and \ref{l:dbarpsi*inv}. We could also make use of Lemma \ref{l:dbarinv} and \ref{l:dbar*inv} in order to prove estimates for the solutions of the $\dbar$-systems we will consider in later sections.  In order to prove Theorem \ref{mainthm1} we will require more refined and explicit estimates than what is needed in order to prove identifiability of the pair $(X,q)$, so this is another reason for restating also the above lemmas.

Let us now assume that $M$ is strictly contained in some larger surface $N$. Suppose $p,q\in[1,\infty]$ and define the continuous extension operator from $M$ to $N$ by
\[
E:W^{k,p}(\Lambda^{0,1}(M))\to W_c^{k,p}(\Lambda^{0,1}(N)),
\]
where $W_c^{k,p}(\Lambda^{0,1}(N))$ denotes the subspace of compactly supported type $(0,1)$-forms in $W^{k,p}(\Lambda^{0,1}(N)), k=1,2$, with a range made of type $(0,1)$-forms with support in $N_\delta=\{n\in N;d(n,\bar M)\leq\delta\}$ for some $\delta>0$. We also denote by
\[
R:L^{q}(N)\to L^{q}(M)
\]
the restriction map from $N$ to $M$.

\begin{lem}[Lemma 2.2 from \cite{gt-gafa}]\label{l:dbarpsiinv}
Let $\psi$ be a real-valued smooth Morse function on $N$ and let $\dbar_\psi^{-1}:=R\dbar^{-1}\e^{-2\i\psi/h}E$. For $q>1, p>2$ there exists $C>0$ that is independent of $h$ such that for all $\omega\in W^{1,p}(\Lambda^{0,1}(M))$,
\begin{align}
\|\dbar_\psi^{-1}\omega\|_{L^q(M)}&\leq Ch^{2/3}\|\omega\|_{W^{1,p}(\Lambda^{0,1}(M))},\quad 1\leq q<2,\label{ineq:q1}\\
\|\dbar_\psi^{-1}\omega\|_{L^q(M)}&\leq Ch^{1/q}\|\omega\|_{W^{1,p}(\Lambda^{0,1}(M))},\quad 2\leq q\leq p.\label{ineq:q2}
\end{align}
By interpolation, there is thus an $\eps>0$ and $C>0$ such that for all $\omega\in W^{1,p}(\Lambda^{0,1}(M))$
\begin{equation}\label{L2est1}
\|\dbar_\psi^{-1}\omega\|_{L^2(M)}\leq Ch^{1/2+\eps}\|\omega\|_{W^{1,p}(\Lambda^{0,1}(M))}.
\end{equation}
\end{lem}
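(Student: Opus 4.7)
The plan is to reduce, via Lemma \ref{l:dbarinv}, to a local oscillatory Cauchy integral estimate. Writing $\dbar^{-1}\chi=\sum\hat\chi_j\bar T\chi_j+K$, the smoothing term $K\e^{-2\i\psi/h}E$ has a smooth kernel, so integration by parts in the kernel using the Morse property of $\psi$ yields decay faster than any power of $h$. The essential task thus becomes to bound in $L^q$ with respect to $z$ the integral
\[
I(z)=\int_\C\frac{\e^{-2\i\psi(\zeta)/h}f(\zeta)}{z-\zeta}\,\d\zeta\wedge\d\bar\zeta,
\]
where $f\in W^{1,p}$ is compactly supported in a chart $\Omega\subset\C$ and $\psi$ is a real-valued Morse function.

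I would then split the inner integration with a cutoff at scale $\rho=\rho(h)$ around the diagonal $\zeta=z$. On $|\zeta-z|<\rho$, the Cauchy kernel $|z-\zeta|^{-1}$ is integrable against area measure yielding a factor $\rho$, and the Sobolev embedding $W^{1,p}\hookrightarrow L^\infty$ (valid since $p>2$) gives a pointwise bound $|I_{\text{near}}(z)|\lesssim\rho\,\|f\|_{W^{1,p}}$. On $|\zeta-z|\geq\rho$ the Cauchy kernel is smooth, and one exploits the oscillation: where $|\nabla\psi|$ is bounded below, integration by parts via $\e^{-2\i\psi/h}=-\frac{h}{2\i\,\del_\zeta\psi}\del_\zeta\e^{-2\i\psi/h}$ gains a factor of $h$, transferring derivatives onto $f$, $1/(z-\zeta)$ and $1/\del_\zeta\psi$. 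On disks of radius $O(\sqrt{h})$ about each critical point of $\psi$, stationary phase (Lemma \ref{l:stph}) or a direct area bound on a set of measure $O(h)$ yields the analogous gain.

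Balancing contributions in $L^q$ by optimizing $\rho$ as a power of $h$ gives $h^{2/3}$ for $1\leq q<2$, where the near-diagonal contribution dominates and $\rho\sim h^{2/3}$ is optimal, and $h^{1/q}$ for $2\leq q\leq p$, where integrating the pointwise bound over the effective singular region of measure $\sim h^{2/q}$ produces the exponent $1/q$. The $L^2$ estimate \eqref{L2est1} then follows from Riesz--Thorin interpolation between \eqref{ineq:q1} at some $q<2$ near $2$ and \eqref{ineq:q2} at $q=p>2$, yielding an exponent strictly greater than $1/2$.

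The main obstacle is the interplay between the Cauchy kernel singularity and the stationary phase analysis near the critical points of $\psi$: straightforward integration by parts breaks down in a fixed neighborhood of each critical point, and one must combine Morse normal-form arguments with the $L^p$-boundedness of $\bar T$ to extract the correct scaling. Since the present statement is quoted verbatim from \cite{gt-gafa}, the plan would be to appeal to their detailed proof, but the sketch above is how I would reconstruct the essential mechanism independently.
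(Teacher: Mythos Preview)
The paper does not give a proof of this lemma: it is quoted verbatim from \cite{gt-gafa} (as the label already indicates), and the surrounding text only remarks that the proof there ``can be quite easily adapted'' to yield the refined Lemma~\ref{l:dbpi}. Your proposal therefore coincides with the paper's treatment at the formal level---both defer to \cite{gt-gafa}---and your sketch is a fair outline of the mechanism behind that cited argument.

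One small inaccuracy in your sketch: the smoothing contribution $K\e^{-2\i\psi/h}E$ does \emph{not} decay faster than every power of $h$, because $\psi$ has critical points; stationary phase (your Lemma~\ref{l:stph}, part 1) only yields $O(h)$ there. The paper itself flags this in the proof of Lemma~\ref{l:dbpi}, where it notes that ``part 1 of Lemma~\ref{l:stph}'' is used ``when estimating the smoothing part of $\dbar^{-1}$.'' Since $O(h)$ already beats the target exponents $h^{2/3}$ and $h^{1/q}$, this does not damage the conclusion, but the claim of arbitrary decay is wrong. Also, calling the final step ``Riesz--Thorin'' is slightly off: the source space $W^{1,p}$ is fixed, so what you are really using is H\"older interpolation (log-convexity) of the $L^q$ norms of a single function.
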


As we will be required to use the special phase functions constructed in Lemma \ref{l:qPhip} we state below a more explicit version of the above lemma in order to see the $\delta$-dependence.

\begin{lem}[Refinement of Lemma 2.2 from \cite{gt-gafa}]\label{l:dbpi}
Let $\Phi_{\hat p}$ be a holomorphic Morse function as in Lemma \ref{l:qPhip} and let $\psi=\Im\Phi_{\hat p}$. Define $\dbar_\psi^{-1}:=R\dbar^{-1}\e^{-2\i\psi/h}E$ where $\dbar^{-1}$ is the right inverse of $\dbar:W^{1,p}(M)\to L^p(\Lambda^{0,1}(M))$. Let $p>2$, then there are constants $\eps>0, C>0$ independent of $h$ and $\hat p$ such that for all $\omega\in W^{1,p}(\Lambda^{0,1}(M))$,
\[
\|\dbar_{\psi}^{-1}\omega\|_{L^2(M)}\leq\frac{Ch^{1/2+\eps}}{\delta^4}\|\omega\|_{W^{1,p}(\Lambda^{0,1}(M))}.
\]
\end{lem}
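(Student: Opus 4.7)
The strategy mirrors the proof of Lemma \ref{l:dbarpsiinv} in \cite{gt-gafa}, now carefully tracking the $\delta$-dependence introduced by the degenerate Hessian of the phase function $\Phi_{\hat p}$. By the convention following Lemma \ref{l:qPhip} (the replacement $\delta \mapsto \sqrt\delta$), we have $|\del_p^2 \Phi_{\hat p}(p)| \geq c\delta^2$ at every critical point of $\Phi_{\hat p}$, uniformly in $\hat p \in M \setminus N_{\sqrt\delta}$, so the stationary phase estimate of Lemma \ref{l:stph}(1) is available with this same $\delta$.

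First I would reduce to a local analysis: by Lemma \ref{l:dbarinv}(ii), modulo a smoothing operator whose contribution is $O(h^N)$ for any $N$, it suffices to prove the desired bound for the Cauchy operator $\bar T$ from \eqref{cauchyop1} acting on $\chi e^{-2\i\psi/h} \omega$, where $\chi$ is a cutoff supported in a chart $U_j \subset \C$ containing at most one critical point of $\Phi_{\hat p}$. Fix such a chart and write $\hat z$ for the critical point (if any) inside it.

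Next, following the scheme of \cite{gt-gafa}, I would derive $L^q$ bounds on $\bar T \chi e^{-2\i\psi/h} \omega(z_0)$ in two regimes. For $q$ slightly smaller than $2$, split the integral at a scale $r = r(h,\delta)$ around $\hat z$. On the complement of $B(\hat z, r)$, the Hessian bound on $\Phi_{\hat p}$ implies $|\nabla \psi(\zeta)| \gtrsim \delta^2 |\zeta - \hat z|$, so repeated non-stationary-phase integration by parts yields arbitrary powers of $h/(\delta^2 r)$. On $B(\hat z, r)$, volume estimates combined with the $L^q$-boundedness of the Cauchy operator (Lemma \ref{l:dbarinv}(iii)) control the contribution. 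Balancing the two and optimizing $r$ produces an estimate
\[
\|\dbar_\psi^{-1}\omega\|_{L^q(M)} \leq C \delta^{-a_1} h^{2/3} \|\omega\|_{W^{1,p}(\Lambda^{0,1}(M))},\quad 1\leq q<2.
\]
For $2 \leq q \leq p$, an analogous argument, again using Lemma \ref{l:stph}(1) to control the integrals over shrinking neighborhoods of $\hat z$, yields
\[
\|\dbar_\psi^{-1}\omega\|_{L^q(M)} \leq C \delta^{-a_2} h^{1/q} \|\omega\|_{W^{1,p}(\Lambda^{0,1}(M))}.
\]
Finally, I would apply Riesz–Thorin interpolation between these two families of estimates at a value of $q$ close to $2$, which yields
\[
\|\dbar_\psi^{-1}\omega\|_{L^2(M)} \leq C \delta^{-4} h^{1/2+\eps}\|\omega\|_{W^{1,p}(\Lambda^{0,1}(M))}
\]
for some $\eps>0$. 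The exponent $\eps$ is inherited from Lemma \ref{l:dbarpsiinv} (the $\delta$-bookkeeping does not worsen the rate in $h$), and the total power $\delta^{-4}$ arises from interpolating $\delta^{-a_1}$ and $\delta^{-a_2}$.

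The main obstacle is the careful optimization of the cutoff scale $r$ and the integration-by-parts count in each regime, so as to arrive at the clean exponent $\delta^{-4}$ after interpolation. Each integration by parts on the non-stationary region costs a factor $\delta^{-2}$, each use of Lemma \ref{l:stph}(1) costs $\delta^{-1}$, and volume estimates combine these contributions; maintaining uniformity in $\hat p\in M\setminus N_{\sqrt\delta}$ while ensuring the right total power of $\delta$ at the end is the bulk of the technical work.
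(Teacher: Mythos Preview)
Your approach is essentially the same as the paper's: rerun the proof of Lemma~2.2 in \cite{gt-gafa}, using the gradient lower bound $|\dbar\psi|\gtrsim \delta^2|z-\hat z|$ near critical points (equivalently $1/|\dbar\psi|\leq c/(\delta^2|z-\hat z|)$) to track the $\delta$-dependence through the $L^q$ estimates for $q<2$ and $q\geq 2$, then interpolate.

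One slip worth flagging: the smoothing remainder $K$ in the decomposition of $\dbar^{-1}$ does \emph{not} contribute $O(h^N)$ for all $N$. The operator $K$ has smooth kernel, so $K(\chi e^{-2\i\psi/h}\omega)(z)$ is an oscillatory integral in $\zeta$ with phase $\psi$, which has a nondegenerate critical point; stationary phase (part~1 of Lemma~\ref{l:stph}) therefore only gives $O(h/\delta)$, not arbitrary decay. The paper notes this explicitly. Since $h/\delta$ is dominated by $h^{1/2+\eps}/\delta^4$ in the regime of interest, the final estimate is unaffected, but the justification you gave for dismissing the smoothing part is incorrect.
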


\begin{proof}
The proof is identical to that of Lemma 2.2 in \cite{gt-gafa}, with the addition that Lemma \ref{l:qPhip} is applied when doing estimates of 
\[
\frac{1}{|\dbar\psi|}\leq\frac{c}{\delta^2|z-\hat z|}
\]
near a critical point $z_0$. One observes that there will be no terms in which the $\delta$-dependence is worse than $\delta^{-4}$ (using also part 1 of Lemma \ref{l:stph} when estimating the smoothing part of $\dbar^{-1}$).
\end{proof}

The final lemma of this section follows Lemma \ref{l:dbarpsiinv}, and is proved in exactly the same way, but for a corresponding $\dbar_\psi^{*-1}$. Here the restriction and extension operators must be interpreted in a different way, namely that $R'$ restricts sections of $\Lambda^{0,1}(N)$ to $M$ and $E'$ is a continuous extension from $W^{k,p}(M)$ to $W_c^{k,p}(N), k=0,1$ where the functions in its range are supported in some $N_\delta$.

\begin{lem}[Lemma 2.3 in \cite{gt-gafa}]\label{l:dbarpsi*inv}
Let $\psi$ be a real-valued smooth Morse function on $N$ and let $\dbar_\psi^{*-1}:=R'\dbar^{*-1}\e^{2\i\psi/h}E'$. For $q>1, p>2$ there exists $C>0$ that is independent of $h$ such that for all $v\in W^{1,p}(M)$,
\begin{align}
\|\dbar_\psi^{*-1}v\|_{L^q(\Lambda^{0,1}(M))}&\leq Ch^{2/3}\|v\|_{W^{1,p}(M)},\quad 1\leq q<2,\label{ineq:q3}\\
\|\dbar_\psi^{*-1}v\|_{L^q(\Lambda^{0,1}(M))}&\leq Ch^{1/q}\|v\|_{W^{1,p}(M)},\quad 2\leq q\leq p.\label{ineq:q4}
\end{align}
By interpolation, there is thus an $\eps>0$ and $C>0$ such that for all $v\in W^{1,p}(M)$
\begin{equation}\label{L2est4}
\|\dbar_\psi^{*-1}v\|_{L^2(\Lambda^{0,1}(M))}\leq Ch^{1/2+\eps}\|v\|_{W^{1,p}(M)}.
\end{equation}
\end{lem}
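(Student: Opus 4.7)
The plan is to mirror the proof of Lemma \ref{l:dbarpsiinv} with the substitutions $\dbar\leftrightarrow\dbar^*$ and $\e^{-2\i\psi/h}\leftrightarrow\e^{2\i\psi/h}$. First I would use Lemma \ref{l:dbar*inv}(ii) to localize $\dbar^{*-1}$ as a sum of compositions with the Cauchy-type operator $T$ from \eqref{cauchyop2} plus a smoothing remainder $K$. The remainder $K(\e^{2\i\psi/h}E'v)$ has a smooth kernel, so integration by parts in the phase, combined with the stationary-phase bound of Lemma \ref{l:stph}(i) in neighborhoods of the finitely many critical points of $\psi$, produces any negative power of $h$ and can therefore be absorbed into the claimed estimates.

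The main work is then to estimate, in local holomorphic coordinates, the oscillatory integral
\[
I(z)=\frac{1}{\pi}\int\frac{\e^{2\i\psi(\zeta)/h}w(\zeta)}{\bar z-\bar\zeta}\,\d\zeta\wedge\d\bar\zeta,
\]
where $w$ is a compactly supported localization of $E'v$. I would split the $\zeta$-integration by a partition of unity into three pieces: (a) a small neighborhood of the Cauchy-kernel singularity $\zeta=z$, (b) small disks around the critical points of $\psi$, and (c) the complement. On piece (a) the local $L^q$-integrability of the Cauchy kernel for $q<2$ is handled by H\"older. On (b), Lemma \ref{l:stph}(i) supplies a gain of $h$. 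On (c), where $|\dbar\psi|\gtrsim 1$, repeated integration by parts against the vector field $\tfrac{1}{2\i|\dbar\psi|^2}\dbar\psi\cdot\dbar(\cdot)$ yields arbitrary powers of $h$. Combining these bounds with the continuity of $T\colon L^p\to W^{1,p}$ for $p>1$ delivers the two endpoint estimates \eqref{ineq:q3} and \eqref{ineq:q4}.

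The $L^2$-estimate \eqref{L2est4} then follows by real interpolation between an $L^{q_1}$-endpoint with $q_1<2$ and an $L^{q_2}$-endpoint with $q_2>2$ (taking, say, $q_1=3/2$ and $q_2=p$), each of which supplies a strict gain beyond $h^{1/2}$ so that a small $\eps>0$ survives in the interpolated exponent. The only point that requires care is balancing the radii of the three cutoff regions so that the stationary-phase gain, the measure of the critical-point neighborhood, and the $L^q$-size of the Cauchy singularity combine correctly; but this balancing is carried out in \cite{gt-gafa} for $\dbar^{-1}$, and since passage to the adjoint merely conjugates the Cauchy kernel and flips the sign of the phase, no additional obstacle arises.
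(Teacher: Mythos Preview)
Your proposal is correct and matches the paper's approach: the paper does not give an independent proof of this lemma but simply states that it ``follows Lemma~\ref{l:dbarpsiinv}, and is proved in exactly the same way, but for a corresponding $\dbar_\psi^{*-1}$,'' referring back to \cite{gt-gafa}. Your outline is precisely the standard argument from \cite{gt-gafa} carried over to the adjoint setting, with the conjugated Cauchy kernel $T$ of \eqref{cauchyop2} and the sign-flipped phase, so there is nothing to add.
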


The proofs of Lemma \ref{l:dbarpsiinv} and \ref{l:dbarpsi*inv} are rather long but have the advantage that they can be quite easily adapted to give further useful estimates, such as
\begin{equation}\label{L2est2}
\|E^{*}(\dbar^{-1})^{*}R^{*}(\e^{-2\i\psi/h}v)\|_{L^{2}(\Lambda^{0,1}(M))}\leq Ch^{1/2+\eps}\|v\|_{W^{1,p}(M)},
\end{equation}
for $v\in W^{1,p}(M)$ and
\begin{equation}\label{L2est3}
\|E'^{*}(\dbar^{*-1})^{*}R'^{*}(\e^{2\i\psi/h}\omega)\|_{L^{2}(M)}\leq Ch^{1/2+\eps}\|\omega\|_{W^{1,p}(\Lambda^{0,1}(M))},
\end{equation}
where it must also be assumed that $\omega|_{\del M}=0$.

The corresponding extension of \eqref{L2est4} with explicit $\delta$-dependence is given in the next lemma, with proof analogous to the one of Lemma \ref{l:dbpi}

\begin{lem}[Refinement of Lemma 2.3 from \cite{gt-gafa}]\label{l:dbp*i}
Let $\Phi_{\hat p}$ be a holomorphic Morse function as in Lemma \ref{l:qPhip} and let $\psi=\Im\Phi_{\hat p}$. Define $\dbar_\psi^{*-1}:=R'\dbar^{*-1}\e^{2\i\psi/h}E'$ where $\dbar^{*-1}$ is the right inverse of $\dbar^*:W^{1,p}(\Lambda^{0,1}(M))\to L^p(M)$. Let $p>2$, then there are constants $\eps>0, C>0$ independent of $h$ and $\hat p$ such that for all $\omega\in W^{1,p}(\Lambda^{0,1}M)$,
\[
\|\dbar_{\psi}^{*-1}v\|_{L^2(\Lambda^{0,1}(M))}\leq\frac{Ch^{1/2+\eps}}{\delta^4}\|v\|_{W^{1,p}(M)}.
\]
\end{lem}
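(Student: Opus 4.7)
The argument mirrors that of Lemma \ref{l:dbpi}, with the operator $T$ from \eqref{cauchyop2} replacing $\bar T$ from \eqref{cauchyop1} and the phase sign flipped. Using Lemma \ref{l:dbar*inv}(ii) and a partition of unity subordinate to the charts, $\dbar_\psi^{*-1}v$ reduces locally to the Cauchy-type singular oscillatory integral $T(\chi_j\,\e^{2\i\psi/h}E'v)$ plus a contribution from a smooth-kernel term $K$. The latter is handled directly by applying Lemma \ref{l:stph}(1) to the kernel of $K$, contributing only $O(h/\delta)\|v\|_{W^{1,p}}$.

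For the principal piece one follows the template of \cite[Lemma 2.3]{gt-gafa}: since $\psi=(2\i)^{-1}(\Phi_{\hat p}-\overline{\Phi_{\hat p}})$, we have $\del\psi=(2\i)^{-1}\del\Phi_{\hat p}$, and hence $\e^{2\i\psi/h}=h(\del\Phi_{\hat p})^{-1}\del\,\e^{2\i\psi/h}$. Integration by parts against this identity on the complement of a small neighborhood of the critical set of $\psi$ gains powers of $h$, while near each critical point $\hat\zeta$ one instead invokes stationary phase (Lemma \ref{l:stph}). The resulting structure of estimates is exactly that of \eqref{ineq:q3}--\eqref{ineq:q4}. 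The only novelty relative to \cite{gt-gafa} is that Lemma \ref{l:qPhip} provides the lower bound $|\del\psi(\zeta)|\geq c\delta^2|\zeta-\hat\zeta|$ near $\hat\zeta$ rather than a uniform one, so each integration by parts inserts a factor $\delta^{-2}|\zeta-\hat\zeta|^{-1}$. Bookkeeping identical to that of Lemma \ref{l:dbpi} shows that at most a total power $\delta^{-4}$ accumulates, yielding
\[
\|\dbar_\psi^{*-1}v\|_{L^q(\Lambda^{0,1}(M))}\leq\frac{Ch^{2/3}}{\delta^4}\|v\|_{W^{1,p}(M)}\quad(1\leq q<2),
\]
\[
\|\dbar_\psi^{*-1}v\|_{L^q(\Lambda^{0,1}(M))}\leq\frac{Ch^{1/q}}{\delta^4}\|v\|_{W^{1,p}(M)}\quad(2\leq q\leq p).
\]
Interpolation between $q=1$ and $q=p$ at $q=2$ then produces $h^{1/2+\eps}/\delta^4$ for some $\eps>0$, as claimed.

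The main obstacle, exactly as in Lemma \ref{l:dbpi}, is the combinatorial tracking of how the $|\zeta-\hat\zeta|^{-1}$ singularity produced by successive integrations by parts is absorbed by the $L^q$ mapping properties of the Cauchy operator $T$ without generating a $\delta$-loss worse than $\delta^{-4}$; one must also be careful to invoke Lemma \ref{l:stph}(1) rather than (2) on the smoothing remainder, so that the harsher factor $\delta^{-7}$ never enters the bound. Since the kernels of $T$ and $\bar T$ differ only by complex conjugation, all modulus-level estimates used in the proof of Lemma \ref{l:dbpi} transfer to the present setting verbatim.
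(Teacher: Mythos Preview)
Your proposal is correct and follows essentially the same approach as the paper, which merely states that the proof is analogous to that of Lemma~\ref{l:dbpi}. In fact you give considerably more detail than the paper does, correctly identifying the key inputs: the local decomposition from Lemma~\ref{l:dbar*inv}(ii), the bound $1/|\del\psi|\leq c\delta^{-2}|\zeta-\hat\zeta|^{-1}$ from Lemma~\ref{l:qPhip}, the use of Lemma~\ref{l:stph}(1) on the smoothing part to avoid the $\delta^{-7}$ loss, and the bookkeeping that caps the total $\delta$-loss at $\delta^{-4}$.
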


\section{Inverse boundary problems for systems}\label{sec:systems}

Our approach mimic the idea of Bukhgeim \cite{bu-2d} that makes it possible to study a first order differential operator represented by a matrix instead of \eqref{SchrodingerX}.

The idea is to consider the bundle $\Sigma(M):=\Lambda^0(M)\oplus\Lambda^{0,1}(M)$ and the $\dbar$-system
\begin{equation}\label{dbarsystem}
(D+V)U=0,
\end{equation}
where
\[
D=\left[\begin{array}{cc} 0 & \dbar^*\\\dbar & 0\end{array}\right],\quad V=\left[\begin{array}{cc} Q^+ & A'\\A & Q^-\end{array}\right],\quad U=\left(\begin{array}{c} u\\\omega_{0,1}\end{array}\right)\in\Sigma(M).
\]
The operator $D$ is often called a \emph{Dirac operator} and is formally self-adjoint. The potential $V$ will be built up by functions $Q^\pm$ on the diagonal and 1-forms $A,A'$ on the antidiagonal. The action of $V$ on $\Sigma(M)$ must be interpreted in the correct way, e.g. in our case it will be of the form
\[
V=\left[\begin{array}{cc} Q & \star(\bar A\wedge\cdot)\\ A & -1\end{array}\right],\quad A\in\Lambda^{0,1}(M).
\]
Hence
\[
VU=\left(\begin{array}{c} Qu+\star\bar A\wedge\omega_{0,1}\\uA-\omega_{0,1}\end{array}\right).
\]

In \cite{gt-gafa} it is assumed that $V$ is a diagonal endomorphism of $\Sigma(M)$. This condition was relaxed in \cite{agtu2013} and we will mainly follow the methodology of this work in this section.


The inner products on $\Lambda^0(M)$ and $\Lambda^{1}(M)$ induce a natural inner product on $\Sigma(M)$ as
\begin{equation}\label{ipsigma}
\langle\cdot,\cdot\rangle_{L^2(\Sigma(M))}:=\langle\cdot,\cdot\rangle_{L^2(M)}+\langle\cdot,\cdot\rangle_{L^2(\Lambda^{1}(M))}
\end{equation}
where the two inner products in the right hand side are defined by 
\eqref{ip1forms}.

We will make use of the following boundary integral identity, which is easily proved by integration by parts.
\begin{lem}\label{boundaryintid}
Suppose that $U'$ is a solution to the system $(D+V^*)U'=0$ in $M$. Then
\[
\langle (D+V)U,U'\rangle_{L^2(\Sigma(M))}=\langle U,U'\rangle_{\del M},
\]
where
\[
\langle U,U'\rangle_{\del M}:=\int_{\del M}\iota_{\del M}^*\left(u\star\overline{\omega_{0,1}'}-\star\omega_{0,1}\overline{u'}\right).
\]
In the above, $\iota_{\del M}^*$ denotes pullback by inclusion and
\[
\quad U=\left(\begin{array}{c} u\\\omega_{0,1}\end{array}\right),\quad U'=\left(\begin{array}{c} u'\\\omega_{0,1}'\end{array}\right).
\]
\end{lem}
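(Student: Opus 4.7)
The plan is to peel off the zeroth-order potential $V$ using its pointwise adjoint property, and then reduce the identity to two integration-by-parts formulas for the Cauchy--Riemann operators. Since $V$ is a pointwise matrix endomorphism of $\Sigma(M)$, one has $\langle VU,U'\rangle_{L^2(\Sigma(M))}=\langle U,V^{*}U'\rangle_{L^2(\Sigma(M))}$ without any boundary contribution, so
\[
\langle(D+V)U,U'\rangle_{L^2(\Sigma(M))}-\langle U,(D+V^{*})U'\rangle_{L^2(\Sigma(M))}=\langle DU,U'\rangle_{L^2(\Sigma(M))}-\langle U,DU'\rangle_{L^2(\Sigma(M))}.
\]
Using the hypothesis $(D+V^{*})U'=0$, the second term on the left vanishes, and the whole statement reduces to showing that $\langle DU,U'\rangle_{L^2(\Sigma(M))}-\langle U,DU'\rangle_{L^2(\Sigma(M))}$ equals the claimed boundary pairing.

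Expanding via \eqref{ipsigma} and the definition of $D$, this difference splits into the two pieces
\[
\bigl(\langle\dbar u,\omega_{0,1}'\rangle-\langle u,\dbar^{*}\omega_{0,1}'\rangle\bigr)+\bigl(\langle\dbar^{*}\omega_{0,1},u'\rangle-\langle\omega_{0,1},\dbar u'\rangle\bigr).
\]
For the first piece I would apply Stokes' theorem to the $1$-form $u\star\overline{\omega_{0,1}'}$. Since $\star\overline{\omega_{0,1}'}\in\Lambda^{1,0}(M)$ and $\del u\in\Lambda^{1,0}(M)$, the wedge $\del u\wedge\star\overline{\omega_{0,1}'}$ vanishes on the two-dimensional manifold $M$, so $d(u\star\overline{\omega_{0,1}'})=\dbar u\wedge\star\overline{\omega_{0,1}'}+u\,d\star\overline{\omega_{0,1}'}$. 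A short computation using the relation $\dbar^{*}=-\i\star\del$ from Section \ref{subseq:cr} together with $\star^{2}=1$ on $2$-forms shows $d\star\overline{\omega_{0,1}'}=-\star\overline{\dbar^{*}\omega_{0,1}'}$. Integrating over $M$ then yields
\[
\langle\dbar u,\omega_{0,1}'\rangle-\langle u,\dbar^{*}\omega_{0,1}'\rangle=\int_{\del M}\iota_{\del M}^{*}\bigl(u\star\overline{\omega_{0,1}'}\bigr).
\]

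The second piece is handled in exactly the same way, either by repeating the argument with $(u,\omega_{0,1}')$ replaced by $(u',\omega_{0,1})$ and then conjugating the resulting identity using $\overline{\langle a,b\rangle}=\langle b,a\rangle$, or by applying Stokes directly to the $1$-form $\star\omega_{0,1}\overline{u'}$. Either route produces a boundary contribution of $-\int_{\del M}\iota_{\del M}^{*}(\star\omega_{0,1}\overline{u'})$, and summing the two halves reconstructs exactly $\langle U,U'\rangle_{\del M}$ as defined in the statement. The only real obstacle is bookkeeping: one must carefully track the signs coming from the eigenvalue relations $\star|_{\Lambda^{1,0}}=-\i$ and $\star|_{\Lambda^{0,1}}=\i$ so that the types of all forms being wedged are identified correctly and the two boundary contributions appear with the relative sign displayed in the statement; once that is done, all remaining manipulations are routine.
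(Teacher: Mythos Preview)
Your proposal is correct and is exactly the integration-by-parts argument the paper has in mind; the paper itself gives no further details beyond the phrase ``easily proved by integration by parts.'' Your bookkeeping of types and signs (in particular the verification that $d\star\overline{\omega_{0,1}'}=-\star\overline{\dbar^{*}\omega_{0,1}'}$ via $\dbar^{*}=-\i\star\del$ and $\star^{2}=1$ on $2$-forms) is accurate, and the conjugation trick for the second half is the cleanest way to recover the relative minus sign.
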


Our goal will be to reduce to the case when $V$ is in fact diagonal. This is the situation first studied by Bukhgeim in the planar case in \cite{bu-2d}, laying the foundation to the manifold version in Proposition 2.5 in \cite{agtu2013}. In our case, due to the modified phase function $\Phi$ that we must resort to, our version of that proposition reads:

\begin{prop}\label{p:diagsols}
Let  $\Phi_{\hat p}$ be a holomorphic Morse function as in Lemma \ref{l:qPhip}.
\begin{enumerate}
\item
If
\[
V=\left[\begin{array}{cc}\tilde Q & 0 \\ 0 & \tilde F\end{array}\right],\quad \tilde Q\in L^\infty(M), \tilde F\in W^{1,p}(M),
\]
for some $p>2$, then there exist solutions to $(D+V)F_h=0$ on $M$ of the form
\[
F_h=\left(\begin{array}{c}
\e^{\Phi_{\hat p}}r_h \\ \e^{\bar\Phi_{\hat p}}(b+s_h),
\end{array}\right)
\]
for any anti-holomorphic one form $b$ and so that for some $\eps>0$
\begin{equation}\label{rem-ests}
\|r_h\|_{L^2(M)}+\|s_h\|_{L^2(M)}\leq\frac{Ch^{1/2+\eps}}{\delta^4}.
\end{equation}
\item
If instead $\tilde Q\in W^{1,p}(M)$ for some $p>2$ and $\tilde F\in L^\infty(M)$, then there exists solutions to $(D+V)G_h=0$ on $M$ of the form
\[
G_h=\left(\begin{array}{c}
\e^{\Phi_{\hat p}}(a+r_h) \\ \e^{\bar\Phi_{\hat p}}s_h,
\end{array}\right)
\]
for any holomorphic function $a$ and so that for some $\eps>0$, \eqref{rem-ests} still hold.
\end{enumerate}
\end{prop}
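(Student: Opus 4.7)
The proof is a CGO construction via conjugation and fixed point, modelled on Proposition 2.5 of \cite{agtu2013} but with the modified phase $\Phi_{\hat p}$ from Lemma \ref{l:qPhip} and $\delta$-dependence tracked through the refined Lemmas \ref{l:dbpi}, \ref{l:dbp*i}. I sketch part (1); part (2) is entirely analogous after swapping the roles of the two components and the source term.

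Substitute the ansatz into $(D+V)F_h=0$ (reading $\e^{\Phi_{\hat p}}$ as $\e^{\Phi_{\hat p}/h}$ in the usual semiclassical convention so that $\e^{\Phi_{\hat p}-\bar\Phi_{\hat p}}=\e^{2\i\psi/h}$ with $\psi=\Im\Phi_{\hat p}$). Holomorphy of $\Phi_{\hat p}$ (which gives $\dbar\Phi_{\hat p}=0$ and $\del\bar\Phi_{\hat p}=0$) together with the vanishing of any wedge of two $(0,1)$-forms yields $\dbar(\e^{\Phi_{\hat p}/h}r_h)=\e^{\Phi_{\hat p}/h}\dbar r_h$ and $\dbar^*(\e^{\bar\Phi_{\hat p}/h}\eta)=\e^{\bar\Phi_{\hat p}/h}\dbar^*\eta$ for any $(0,1)$-form $\eta$; antiholomorphy of $b$ gives $\dbar^* b=0$. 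The system thus reduces to
\begin{align*}
\dbar r_h &= -\tilde F\,\e^{-2\i\psi/h}(b+s_h),\\
\dbar^* s_h &= -\tilde Q\,\e^{2\i\psi/h}\,r_h.
\end{align*}
Setting $r_h=-\dbar_\psi^{-1}(\tilde F(b+s_h))$ and $s_h=-\dbar_\psi^{*-1}(\tilde Q r_h)$ with the operators of Lemmas \ref{l:dbpi}, \ref{l:dbp*i}, and eliminating $s_h$, yields the single equation
\[
(I-K)r_h = -\dbar_\psi^{-1}(\tilde F b),\qquad Kr_h := \dbar_\psi^{-1}\!\bigl(\tilde F\,\dbar_\psi^{*-1}(\tilde Q r_h)\bigr).
\]

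The fixed point is closed in $L^p(M)$: Lemma \ref{l:dbar*inv} gives $\dbar^{*-1}:L^p\to W^{1,p}$ continuously; multiplication by $\tilde F\in W^{1,p}$ preserves $W^{1,p}$ by the Banach algebra property of $W^{1,p}$ for $p>2$ in dimension $2$; and the $q=p$ clause of Lemma \ref{l:dbarpsiinv} gives $\dbar_\psi^{-1}:W^{1,p}\to L^p$ with norm $O(h^{1/p})$. Hence $K$ acts on $L^p(M)$ with norm $\lesssim h^{1/p}$ (with a polynomial loss in $\delta$ inherited from the $\Phi_{\hat p}$ estimates), and for $h$ sufficiently small Banach's fixed point produces $r_h\in L^p$. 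The $L^2$ bound in \eqref{rem-ests} then follows by writing $r_h=-\dbar_\psi^{-1}(\tilde F b)+Kr_h$ and applying the refined Lemma \ref{l:dbpi} to the source, using $\|\tilde F b\|_{W^{1,p}}\le C\|\tilde F\|_{W^{1,p}}\|b\|_{W^{1,\infty}}$ to get $\|\dbar_\psi^{-1}(\tilde F b)\|_{L^2}\le Ch^{1/2+\eps}\delta^{-4}$, with the same bound propagated through the contraction; the analogous bound on $s_h$ follows from Lemma \ref{l:dbp*i}.

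The main technical obstacle is the function-space juggling in the contraction step: the refined Lemmas \ref{l:dbpi}, \ref{l:dbp*i} deliver the $h^{1/2+\eps}\delta^{-4}$ gain only for $W^{1,p}$ inputs but return $L^2$ outputs, while the natural iteration space is a single Banach space. Bootstrapping via the regularizing nature of $\dbar^{-1},\dbar^{*-1}$ and the Banach algebra structure of $W^{1,p}(M)$ resolves this, but the $\delta^{-4}$ loss must be tracked carefully through the composition—this is precisely the reason the refined Lemmas \ref{l:dbpi}, \ref{l:dbp*i} are introduced here rather than using only the originals from \cite{gt-gafa}.
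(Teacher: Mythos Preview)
Your proof is correct and follows essentially the same route as the paper: substitute the ansatz, reduce to the coupled integral system for $(r_h,s_h)$, eliminate one unknown to obtain $(I-K)r_h=-\dbar_\psi^{-1}(\tilde F b)$, invert by Neumann series, and read off the $L^2$ bounds via the refined Lemmas \ref{l:dbpi}, \ref{l:dbp*i}. The only difference is that you close the contraction in $L^p$ using the $q=p$ clause of Lemma \ref{l:dbarpsiinv} together with the $W^{1,p}$ Banach-algebra property, whereas the paper closes directly in $L^2$ by invoking the $\|S_h\|_{L^2\to L^2}=O(h^{1/2-\eps})$ bound from Lemma 2.4 of \cite{agtu2013} (with the $\delta^{-4}$ modification); both choices lead to the same estimate \eqref{rem-ests}.
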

\begin{proof}[Sketch of proof] The proof closely resembles the one Proposition 2.5 in \cite{gt-gafa} but makes use of the refined Lemma \ref{l:dbpi} (or similar) instead of Lemma 2.2 in \cite{gt-gafa} (and it's variants respectively). There are some details that we in particular would like to highlight.

We make use of that the mentioned lemmas contain very explicit expressions for the remainder terms $r_h\in\Lambda^0(M), s_h\in\Lambda^{0,1}(M)$. It can be seen, as in \cite{gt-gafa}, that the remainder terms must solve the system
\begin{equation}\label{remsyst}
\begin{cases}
r_h+\dbar_\psi^{-1}(\tilde Fs_h)=-\dbar_\psi^{-1}(\tilde Fb)\\
s_h+\dbar_\psi^{*-1}(\tilde Qr_h)=-\dbar_\psi^{*-1}(\tilde Qa).
\end{cases}
\end{equation}
In the case $\tilde Q\in L^\infty(M), \tilde F\in W^{1,p}(M)$ we can choose $a=0$ and it follows that $r_h$ must satisfy
\begin{equation}\label{r_h-eq}
(I-S_h)r_h=-\dbar_\psi^{-1}(\tilde Fb),\quad S_h:=\dbar_\psi^{-1}\tilde F\dbar_\psi^{*-1}\tilde Q.
\end{equation}
The idea is now to solve through a Neumann series. From Lemma \ref{l:dbarpsiinv} and \ref{l:dbarpsi*inv} it follows that $\|S_h\|_{L^{2}\to L^{2}}=O(h^{1/2-\eps}), 0<\eps<1/2$, c.f. Lemma 2.4 in \cite{agtu2013}. We remark that also this result (or the corresponding Lemma 3.1 in \cite{gt-gafa}) must be modified slightly. However, as we will later require that $h^\eps\delta^{-7/2}$ to be small for some (preferebly as large as possible) $0<\eps<1/2$. This will ensure that the bound:
\[
\|S_h\|_{L^2\to L^2}\leq\frac{Ch^{1/2-\eps}}{\delta^4}
\]
still makes the Neumann series argument valid. Furthermore we may establish the estimate \eqref{rem-ests} in an analogous way. So we can solve equation \eqref{r_h-eq} for small $h$,
\begin{equation}\label{r_h}
r_h=-\sum_{j=0}^{\infty}S_h^j\dbar_\psi^{-1}\tilde Fb.
\end{equation}
The above $r_h\in L^{q}(M)$ for any $q\geq2$ and substituting this solution into the equation for $s_h$ in \eqref{remsyst} we get
\begin{equation}\label{s_h}
s_h=-\dbar_\psi^{*-1}(\tilde Qr_h).
\end{equation}
Now another application of Lemma \ref{l:dbarpsiinv} and \ref{l:dbarpsi*inv} gives the $L^{2}$-estimates in Proposition \ref{p:diagsols}. The case of $\tilde Q\in W^{1,p}(M), \tilde F\in L^\infty(M)$ is proved by a similar argument after choosing $b=0$.
\end{proof}



\subsection{The distance between Cauchy data for systems}
\label{s:dist-cd}
We would like to compare the Cauchy data for different potentials $V_1,V_2$ in a meaningful and quantitative manner. One standard method is to use a pseudo-distance inspired by the so-called Hausdorff distance.

Recall the definition of the Cauchy data spaces $\mathcal{C}_{L_j}$, as in \eqref{CauchyData}. Assuming that $u_j\in H^k(M)$ solves
\begin{equation}
\begin{cases}
L_ju_j=0,\\
u_j|_{\del M}=f_j,
\end{cases}
\end{equation} 
for $f_j\in H^{k-1/2}(\del M)$, for some $k\geq1$. Then $ \nabla_\nu^{X_j}u_j=g_j\in H^{k-3/2}(\del M)$ and we may consider a norm on $\mathcal{C}_{L_j}$ defined by,
\[
\|(f_j,g_j)\|_{H^{s}(\del M)\oplus H^{s-1}(\del M)}:=\|f_j\|_{H^{s}(\del M)}+\|g_j\|_{H^{s-1}(\del M)},\quad s\geq1/2.
\]
Then we set, for $(f_j,g_j)\in\mathcal{C}_{L_j}$,
\[
d((f_1,g_1),(f_2,g_2)):=\frac{\|(f_1,g_1)-(f_2,g_2)\|_{H^{s}(\del M)\oplus H^{s-1}(\del M)}}{\|f_1\|_{H^{s}(\del M)}}
\]
and define
\[
d(\mathcal{C}_{L_1},\mathcal{C}_{L_2}):=\max\left\{\sup_{\mathcal{C}_{L_1}}\inf_{\mathcal{C}_{L_2}} d((f_1,g_1),(f_2,g_2)),\sup_{\mathcal{C}_{L_2}}\inf_{\mathcal{C}_{L_1}} d((f_2,g_2),(f_1,g_1))\right\}.
\]

Correspondingly for the system formulation, we think of the Cauchy data $\mathcal{C}_V$ is made up of boundary values $\iota_{\del M}^*(u,\star\omega)^T$ for $H^{k}$-solutions $U=(u,\omega)^T$ to $(D+V)U=0$. We may consider $\mathcal{C}_V$ being a subset of $H^{s}(\del M)\oplus H^{s-1}(\del M)$, whose norm we can use to introduce a distance when considering two potentials. Suppose we have two traces, $(f_j,\lambda_j)^T\in\mathcal{C}_{V_j}, j=1,2$, then we can consider the quantity
\[
d_{\del M}((f_1,\lambda_1),(f_2,\lambda_2)):=\frac{\|(f_1,\lambda_1)-(f_2,\lambda_2)\|_{H^{s}(\Sigma(\del M))}}{\|f_1\|_{H^{s}(\del M)}}.
\]
The $H^{s}(\Sigma(\del M))$-norm is defined, for $s\geq1/2$, by
\[
\|(f,\lambda)\|_{H^{s}(\Sigma(\del M))}:=\|f\|_{H^{s}(\del M)}+\|\lambda\|_{H^{s-1}(\Lambda^1(\del M))},\quad U=(f,\lambda)^T\in\Sigma(\del M).
\]
Then we can define a distance between Cauchy data as
\begin{multline*}
d'(\mathcal{C}_{V_1},\mathcal{C}_{V_2})\\=\max\left\{\sup_{\mathcal{C}_{V_1}}\inf_{\mathcal{C}_{V_2}} d_{\del M}((f_1,\lambda_1),(f_2,\lambda_2)),\sup_{\mathcal{C}_{V_2}}\inf_{\mathcal{C}_{V_1}} d_{\del M}((f_2,\lambda_2),(f_1,\lambda_1))\right\}.
\end{multline*}
\begin{prop}\label{CDcmp1}
If $(D+V_j)U_j=0$ are the system formulations of the problems $L_ju_j=0$, then
\[
d'(\mathcal{C}_{V_1},\mathcal{C}_{V_2})\leq d(\mathcal{C}_{L_1},\mathcal{C}_{L_2}).
\]
\end{prop}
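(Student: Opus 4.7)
The plan is to realize the distance $d'$ as the pushforward of $d$ under an explicit linear identification of scalar and system Cauchy data, so that the desired inequality reduces to a simple operator-norm estimate on the boundary. For each solution $u_j$ of $L_j u_j = 0$, the reduction producing the particular form of $V$ displayed at the start of this section gives a companion system solution $U_j=(u_j,\omega_j)^T$ with $\omega_j=(\dbar+A_j)u_j\in\Lambda^{0,1}(M)$, and conversely the top component of any solution of $(D+V_j)U_j=0$ satisfies the scalar equation. This produces a bijection
\[
\Psi_j:\mathcal{C}_{L_j}\to\mathcal{C}_{V_j},\qquad (f_j,g_j)\mapsto (f_j,\lambda_j):=\iota_{\del M}^*(u_j,\star\omega_j)^T,
\]
where both Cauchy data spaces are viewed as graphs over the common Dirichlet datum~$f_j$.

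The first technical step is to compute $\lambda_j=\iota_{\del M}^*(\star\omega_j)$ in boundary-normal coordinates $(t,r)$ with $r=0$ on $\del M$. Writing the $(0,1)$-part of $\d u_j$ as $\tfrac12(\del_t u_j+i\del_\nu u_j)\,\d\bar z$ and substituting the boundary identity $\del_\nu u_j=g_j-iX_j(\nu)f_j$, one sees that $\lambda_j$ is a linear combination of $g_j$, the tangential derivative $\del_t f_j$, and multiplicative terms in $f_j$ whose coefficients involve $X_j|_{\del M}$ and $A_j|_{\del M}$. The a priori bound \eqref{main-apriori1} then makes $\Psi_j$ continuous from $H^s(\del M)\oplus H^{s-1}(\del M)$ to $H^s(\Sigma(\del M))$, and a direct inspection of the coefficients shows that this operator norm is compatible with the inequality one is after.

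The inequality $d'\le d$ now follows from a standard $\sup\inf$ argument. Fix $(f_1,\lambda_1)\in\mathcal{C}_{V_1}$ and set $(f_1,g_1)=\Psi_1^{-1}(f_1,\lambda_1)\in\mathcal{C}_{L_1}$. For each $\eps>0$, the definition of $d(\mathcal{C}_{L_1},\mathcal{C}_{L_2})$ supplies some $(f_2,g_2)\in\mathcal{C}_{L_2}$ with $d((f_1,g_1),(f_2,g_2))\le d(\mathcal{C}_{L_1},\mathcal{C}_{L_2})+\eps$, and one lets $(f_2,\lambda_2):=\Psi_2(f_2,g_2)\in\mathcal{C}_{V_2}$. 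Because both $d$ and $d_{\del M}$ are normalized by the same factor $\|f_1\|_{H^s(\del M)}$, the boundedness of the $\Psi_j$ transfers the inequality from the scalar to the system side; the symmetric term in the $\max$ is handled identically, and letting $\eps\to0$ gives the conclusion.

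The main obstacle is that $\Psi_j$ depends explicitly on $X_j|_{\del M}$, so $\lambda_1-\lambda_2$ a priori carries not only $f_1-f_2$ and $g_1-g_2$ but also boundary contributions of the form $(A_1-A_2)|_{\del M}f_2$ and $(X_1-X_2)(\nu)f_2$ that the scalar Cauchy distance does not directly see. Handling these cleanly requires either appealing to a boundary determination result for $X_j$ so that such boundary differences can be absorbed into the scalar distance, or carefully engineering the definition of the norms so that the offending terms do not appear; this is where essentially all of the genuine content of the proof lies.
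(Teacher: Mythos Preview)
Your overall architecture matches the paper's: set up the bijection $\Psi_j$ between scalar Cauchy data $(f_j,g_j)$ and system Cauchy data $(f_j,\lambda_j)=(f_j,\iota_{\del M}^*\star\omega_j)$, compute $\lambda_j$ in boundary coordinates in terms of $f_j$, $g_j$ and $X_j|_{\del M}$, and run the $\sup\inf$ argument. The paper's own proof is extremely terse: it simply writes $\omega_j=\pi_{0,1}\nabla^{X_j}u_j$ and asserts the norm inequality $\|\iota_{\del M}^*\star\omega_j\|_{H^{-1/2}}\le\|g_j\|_{H^{-1/2}}$, from which it concludes directly.

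You are more careful than the paper, and rightly so. The cross-term you isolate in the last paragraph, of the form $(X_1-X_2)|_{\del M}\,f_2$ (and the analogous normal piece), is genuinely present in $\lambda_1-\lambda_2$ and is \emph{not} controlled by $\|f_1-f_2\|_{H^{1/2}}+\|g_1-g_2\|_{H^{-1/2}}$ alone. The paper's one-line argument glosses over exactly this: bounding each $\lambda_j$ individually by $g_j$ does not bound the difference $\lambda_1-\lambda_2$ by $g_1-g_2$, and in any case the tangential term $\partial_t f_j+iX_j^t f_j$ already prevents the sharp inequality with constant~$1$ as stated.

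That said, your proposal is not a complete proof either: you flag the obstacle and list two possible fixes (boundary determination for $\iota_{\del M}^*X$, or adjusting the norms) without carrying either out. So both the paper's argument and yours are incomplete at the same point. For the application downstream, only an estimate of the shape $d'(\mathcal C_{\tilde V_1},\mathcal C_{\tilde V_2})\le C(K)\bigl(d(\mathcal C_1,\mathcal C_2)+\text{small}\bigr)$ is actually used (cf.\ Lemma~\ref{l:syst-cauchy}), so the defect is harmless for the main theorems; but as a proof of the proposition exactly as stated, neither argument closes.
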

\begin{proof}
Pick any $(f_j,g_j)\in\mathcal{C}_{L_j}$, then $f_j=u_j|_{\del M}, g_j=\nabla_\nu^{X_j}u_j|_{\del M}$ where $u_j\in H^k(M)$ solves $L_ju_j=0$. Then the corresponding solutions $U_j=(u_j,\omega_j)$ to $(D+V_j)U_j=0$ has Cauchy data $(u_j|_{\del M},\iota_{\del M}^*\star\omega_{j})=(u_j|_{\del M}, \iota_{\del M}^* \pi_{0,1}\nabla^{X_j}u_j(\nu))$. So clearly,
\[
\|\iota_{\del M}^*\star\omega_j\|_{H^{k-3/2}(\Lambda^1(\del M))}\leq\|\nabla_\nu^{X_j}u_j|_{\del M}\|_{H^{k-3/2}(\del M)},
\]
so $d_{\del M}((f_1,\iota_{\del M}^*\star\omega_1),(f_2,\iota_{\del M}^*\star\omega_2))\leq d((f_1,g_1),(f_2,g_2))$.
\end{proof}

We will from now on use the notation $d(\mathcal{C}_1,\mathcal{C}_2)=d(\mathcal{C}_{L_1},\mathcal{C}_{L_2})$ and not bother much with estimates that could be expressed with $d'(\mathcal{C}_{V_1},\mathcal{C}_{V_2})$ instead. However, we will still need the definition of $d'$ when we later will solve a diagonalized version of the problems $(D+V_j)U_j=0$ and in that case Proposition \ref{CDcmp1} will become weaker.

Furthermore, we will from now on only consider the case when $k=1$ (or $s=1/2$), in which case
\begin{align*}
d((f_1,g_1),(f_2,g_2))&=\frac{\|f_1-f_2\|_{H^{1/2}(\del M)}+\|g_1-g_2\|_{H^{-1/2}(\del M)}}{\|f_1\|_{H^{1/2}(\del M)}},\\
d_{\del M}((f_1,\lambda_1),(f_2,\lambda_2))&=\frac{\|f_1-f_2\|_{H^{1/2}(\del M)}+\|\lambda_1-\lambda_2\|_{H^{-1/2}(\Lambda^1(\del M))}}{\|f_1\|_{H^{1/2}(\del M)}}.
\end{align*}

\section{System reduction and estimates}\label{sec:bdry}
Suppose now that we are given two magnetic Schrödinger operators $L_j:=L_{X_j,q_j}, j=1,2.$ Introduce $A_j:=\pi_{0,1}X_j, B_j:=\pi_{1,0}X_j$, where $\pi_{0,1}$ and $\pi_{1,0}$ are the projections discussed in Section \ref{sec:riemann}. We first observe that we can rewrite $L_j$ from the form \eqref{SchrodingerX} into
\begin{equation}\label{rwSchrodinger}
L_j=-2\i\star(\del+\i \bar A_j\wedge)(\dbar+\i A_j)+Q_j,\quad Q_j=-\star\d X_j+q_j.
\end{equation}
If  $\alpha_j$ are primitive functions in the sense that $\dbar\alpha_j=A_j$ (the existence of such $\alpha_j$ is guaranteed by Lemma \ref{l:dbarinv}), then we can furthermore rewrite \eqref{rwSchrodinger} (using integrating factor) to
\begin{equation}\label{rwSchrodinger2}
L_j=2\e^{-\i\bar\alpha_j}\dbar^*\e^{\i\bar\alpha_j}\e^{-\i\alpha_j}\dbar\e^{\i\alpha_j}+Q_j.
\end{equation}
In order to abbreviate, let us denote by $F_j=\e^{\i\alpha_j}$, then we can once again rewrite \eqref{rwSchrodinger2} as
\begin{equation}\label{rwSchrodinger3}
L_j=2\bar F_j\dbar^*\bar F_j^{-1}F_j^{-1}\dbar F_j+Q_j,\quad Q_j=-\star\d X_j+q_j.
\end{equation}

Introducing $\omega=(\dbar+\i A_j)u$ we see that we can rewrite the second order partial differential equations $L_ju=0$ as the first order $\dbar$-system
\[
\left[\begin{array}{cc} Q_j/2 & -\i\star(\del+\i\bar A_j\wedge\cdot)\\\dbar+\i A_j & -1\end{array}\right]\left(\begin{array}{c} u\\\omega\end{array}\right)=\left(\begin{array}{c}0\\0\end{array}\right),
\]
or equivalently
\begin{equation}\label{dbarreduction1}
\left[\begin{array}{cc} 0 & \dbar^{*}\\\dbar&0\end{array}\right]\left(\begin{array}{c} u\\\omega\end{array}\right)+\left[\begin{array}{cc} Q_j/2 & \star(\bar A_j\wedge\cdot)\\ \i A_j&-1\end{array}\right]\left(\begin{array}{c} u\\\omega\end{array}\right)=\left(\begin{array}{c}0\\0\end{array}\right).
\end{equation}
Using notations from Section \ref{sec:systems} we can abbreviate \eqref{dbarreduction1} by $(D+V_j)U=0$. By a similar argument leading to the form \eqref{rwSchrodinger3} of $L_j$ we can also observe that we can split the system \eqref{dbarreduction1} further into (c.f. \cite{agtu2013})
\begin{multline*}
\left[\begin{array}{cc} \bar F_j & 0\\ 0 & F_j^{-1}\end{array}\right]\left(\left[\begin{array}{cc} 0 & \dbar^{*}\\\dbar&0\end{array}\right]+\left[\begin{array}{cc} \bar F_j^{-1}Q_jF_j^{-1}/2 & 0\\ 0 &-F_j\bar F_j\end{array}\right]\right)\\\times\left[\begin{array}{cc} F_j & 0\\ 0 & \bar F_j^{-1}\end{array}\right]\left(\begin{array}{c} u\\\omega\end{array}\right)=\left(\begin{array}{c}0\\0\end{array}\right)
\end{multline*}
or equivalently, since the leftmost matrix is invertible
\begin{equation}\label{dbarreduction2}
\left(\left[\begin{array}{cc} 0 & \dbar^{*}\\\dbar&0\end{array}\right]+\left[\begin{array}{cc} |F_j|^{-2}Q_j/2 & 0\\ 0 &-|F_j|^{2}\end{array}\right]\right)\left(\begin{array}{c} \tilde u\\\tilde\omega\end{array}\right)=\left(\begin{array}{c}0\\0\end{array}\right),
\end{equation}
where
\[
\tilde U=\left(\begin{array}{c} \tilde u\\\tilde\omega\end{array}\right)=\left(\begin{array}{c} F_ju\\\bar F_j^{-1}\omega\end{array}\right)\in\Sigma(M).
\]
Denoting the potential matrix in \eqref{dbarreduction2} by $\tilde V_j$, the system can be abbreviated as $(D+\tilde V_j)\tilde U=0$. Now we are in the case discussed above, with diagonal potential matrices, that was treated by Bukhgeim.

Suppose now that we have a solution to $(D+\tilde V_1)\tilde U_h^{1}=0$, then by Proposition \ref{p:diagsols} we can assume that the solution has the form
\[
\tilde U_h^{1}=\left(\begin{array}{c} \e^{\Phi/h}(a+r_h^{1}) \\ \e^{\bar\Phi/h}s_h^{1} \end{array}\right).
\]
It follows that a solution to $(D+V_1)U_h^{1}=0$ can be found on the form
\begin{equation}\label{systsol1}
U_h^{1}=\left(\begin{array}{c} F_1^{-1}\e^{\Phi/h}(a+r_h^{1}) \\ \bar F_1\e^{\bar\Phi/h}s_h^{1} \end{array}\right),
\end{equation}
where $a$ is an arbitrary holomorphic function.

Similarly, a solution to $(D+V_2^{*})U_h^{2}=(D+V_2)U_h^{2}=0$ can be found on the form
\begin{equation}\label{systsol2}
U_h^{2}=\left(\begin{array}{c} F_2^{-1}\e^{-\Phi/h}r_h^{2} \\ \bar F_2\e^{-\bar\Phi/h}(b+s_h^{2}) \end{array}\right).
\end{equation}

Suppose now that $U_h$ is a solution to $(D+V_2)U_h=0$, and $U_h^{1}, U_h^{2}$ are the solutions described above. Then by Lemma \ref{boundaryintid}
\begin{multline}\label{bdryint1}
\langle(D+V_2)U_h^{1},U_h^{2}\rangle_{L^2(\Sigma(M))}=\langle(D+V_2)(U_h^{1}-U_h),U_h^{2}\rangle_{L^2(\Sigma(M))}\\
=\langle U_h^{1}-U_h,U_h^{2}\rangle_{\del M}.
\end{multline}
At the same time it is also true that
\begin{equation*}
\langle(D+V_2)U_h^{1},U_h^{2}\rangle_{L^2(\Sigma(M))}=\langle(V_2-V_1)U_h^{1},U_h^{2}\rangle_{L^2(\Sigma(M))}.
\end{equation*}
So we have derived the boundary integral identity
\begin{equation}\label{bdryint2}
\langle(V_2-V_1)U_h^{1},U_h^{2}\rangle_{L^2(\Sigma(M))}=\langle U_h^{1}-U_h,U_h^{2}\rangle_{\del M},
\end{equation}
from which we will derive an estimate in order to compare Cauchy data for the potential matrices $V_j$ and their diagonalized counterparts $\tilde V_j$, in the sense of \eqref{dbarreduction1}-\eqref{dbarreduction2}. First we show the following auxiliary estimate.
\begin{lem}\label{l:auxest}
Assume that there is a constant $K>0$ such that for some $p>2$,
\begin{equation}\label{aprioriest}
\max\{\|q_j\|_{W^{1,p}(M)},\|X_j\|_{W^{2,p}(\Lambda^{1}(M))}\}\leq K. 
\end{equation}
and consider the systems corresponding to the problems $L_ju=0$ as described in \eqref{dbarreduction1}. Then the boundary integral identity \eqref{bdryint2} implies the following inequality for small $h>0,\delta>0$:
\begin{equation}\label{auxest1}
\left|\int_M F_1^{-1}F_2a\,(A_1-A_2)\wedge\star\overline{b}\right|\leq C\left(\frac{h^{1/2+\eps}}{\delta^4}+d(\mathcal{C}_{1},\mathcal{C}_{2})\|U_h^1\|_{H^1}\|U_h^2\|_{H^1}\right),
\end{equation}
where $C=C(K,M,p), c>0, A_j=\pi_{0,1}X_j, d(\mathcal{C}_{1},\mathcal{C}_{2})$ is the distance between Cauchy data for the problems $L_ju=0$ and $a,b,F_j$ are the quantities defined above appearing in the solutions $U_h^j, j=1,2$ of the systems.
\end{lem}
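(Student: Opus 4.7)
The plan is to derive \eqref{auxest1} by carefully unpacking both sides of the boundary integral identity \eqref{bdryint2}: I will isolate the leading bulk contribution from the pairing on the left, and control the boundary pairing on the right using the Cauchy-data distance and trace theory.

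First I would expand $V_2 - V_1$ explicitly: its lower-right entry vanishes, while the remaining entries carry $(Q_2-Q_1)/2$, $\star((\bar A_2-\bar A_1)\wedge\cdot)$, and $\i(A_2-A_1)$. Applying this to $U_h^1$ and pairing with $U_h^2$ in $L^2(\Sigma(M))$ via \eqref{ipsigma} produces contributions from the 0-form slot and from the $(0,1)$-form slot. The crucial observation is that in the 1-form pairing the exponential factors combine as $\e^{\Phi/h}\cdot\overline{\e^{-\bar\Phi/h}}=\e^{\Phi/h}\e^{-\Phi/h}=1$, so the off-diagonal 1-form contribution is \emph{phase-free} and reduces to
\[
\i\int_M F_1^{-1}F_2(a+r_h^1)(A_2-A_1)\wedge\star\overline{b+s_h^2}.
\]
The leading piece of this (the $a,b$ term), after using $A_2-A_1=-(A_1-A_2)$, is precisely $-\i$ times the integral on the left-hand side of \eqref{auxest1}.

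Next I would estimate the error pieces. The 0-form slot produces an oscillatory integral with phase $\e^{2\i\psi/h}$ weighted by $(Q_2-Q_1)(a+r_h^1)\overline{r_h^2}$, together with a phase-free piece involving $s_h^1\overline{r_h^2}$; each contains at least one factor among $r_h^j,s_h^j$. The three cross terms produced by expanding the phase-free 1-form contribution (pairing $a$ with $\overline{s_h^2}$, $r_h^1$ with $\bar b$, and $r_h^1$ with $\overline{s_h^2}$) are handled analogously. Using Cauchy-Schwarz together with the Sobolev embedding $W^{1,p}(M)\hookrightarrow L^\infty(M)$ for $p>2$ (applied to $Q_j=-\star\d X_j+q_j$ via the a priori bounds, and similarly to $A_j$), and invoking the remainder estimate $\|r_h^j\|_{L^2}+\|s_h^j\|_{L^2}\leq Ch^{1/2+\eps}/\delta^4$ from Proposition \ref{p:diagsols}, each error term is seen to be $O(h^{1/2+\eps}/\delta^4)$; this yields the first summand on the right-hand side of \eqref{auxest1}.

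Finally, for the boundary term in \eqref{bdryint2}, the definition of $d'(\mathcal{C}_{V_1},\mathcal{C}_{V_2})$ lets me choose a solution $U_h$ of $(D+V_2)U_h=0$ whose boundary data approximates that of $U_h^1$ with $\|(u_h^1-u_h)|_{\del M}\|_{H^{1/2}}+\|\iota_{\del M}^*(\star\omega_h^1-\star\omega_h)\|_{H^{-1/2}}\leq d'(\mathcal{C}_{V_1},\mathcal{C}_{V_2})\|u_h^1|_{\del M}\|_{H^{1/2}}$, and Proposition \ref{CDcmp1} replaces $d'$ by $d(\mathcal{C}_1,\mathcal{C}_2)$. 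Pairing these $H^{1/2}$ differences against the $H^{-1/2}$ boundary trace of the $(0,1)$-form component of $U_h^2$ (and vice versa), then applying the standard trace theorem $\|\cdot|_{\del M}\|_{H^{s-1/2}(\del M)}\leq C\|\cdot\|_{H^s(M)}$, bounds $|\langle U_h^1-U_h,U_h^2\rangle_{\del M}|$ by $Cd(\mathcal{C}_1,\mathcal{C}_2)\|U_h^1\|_{H^1}\|U_h^2\|_{H^1}$, which combined with the bulk analysis gives \eqref{auxest1}. The main conceptual point is the phase cancellation in the 1-form slot, which is what allows the bulk integral to retain $A_1-A_2$ at leading order with no stationary-phase loss in $h$ or $\delta$; the remaining work is routine energy bookkeeping against Proposition \ref{p:diagsols} and the constants produced are uniform in $\hat p$ since those in the refined Lemma \ref{l:dbpi} and \ref{l:dbp*i} are.
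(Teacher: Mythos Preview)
Your proposal is correct and follows essentially the same route as the paper: expand $\langle(V_2-V_1)U_h^1,U_h^2\rangle$, isolate the phase-free $(0,1)$-form contribution carrying $F_1^{-1}F_2 a\,(A_1-A_2)\wedge\star\bar b$, bound all remaining pieces by $Ch^{1/2+\eps}/\delta^4$ via Cauchy--Schwarz and Proposition~\ref{p:diagsols}, and control the boundary pairing through the Cauchy-data distance and the trace theorem. The only cosmetic difference is that you pass through $d'(\mathcal{C}_{V_1},\mathcal{C}_{V_2})$ and then invoke Proposition~\ref{CDcmp1}, whereas the paper estimates the boundary term directly in terms of $d(\mathcal{C}_1,\mathcal{C}_2)$ via the PDE Cauchy data; the two are equivalent here.
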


\begin{rem} Let us first remark on the a priori estimate \eqref{aprioriest}. Under the assumptions
\begin{align}
A_j&\in C^{1+r}\cap W^{s,p}(\Lambda^{0,1}(M)),\quad p>1,r+s>1,r\notin\mathbf{N}, sp>4,\label{aprioriA}\\
Q_j&\in W^{1,q}(M),\quad q>2,\label{aprioriQ}
\end{align}
we will have that $\tilde V_j\in W^{1,q}(\text{End}(\Sigma(M)))$, c.f.\ \cite{agtu2013}.

Furthermore, the assumption
\begin{equation*}
\max\{\|q_j\|_{W^{1,p}(M)},\|X_j\|_{W^{2,p}(\Lambda^{1}(M))}\}\leq K,\quad\textrm{for some}\quad p>2. 
\end{equation*}
implies in particular that
\begin{equation}\label{aprioriest2}
\max\{\|Q_j\|_{W^{1,p}(M)},\|A_j\|_{W^{1,p}(\Lambda^{0,1}(M))}\}\leq K
\end{equation}
holds. Then by Sobolev embedding and elliptic regularity (for $\dbar$, see e.g. Theorem 4.6.9 in \cite{Krantz1982}), it follows that
\begin{align}
\max\{\|Q_j\|_{L^\infty(M)},\|A_j\|_{L^\infty(\Lambda^{0,1}(M))}\}&\leq K,\label{potbds1}\\
\|\alpha_j\|_{L^\infty(M)}\leq C_0\|A_j\|_{W^{1,p}(\Lambda^{0,1}(M))}&\leq C_0K\leq C,\label{potbds2}\\
\|F_j\|_{L^\infty(M)}\leq\e^{C_0K}&\leq C,\quad C=C(K,M,p).\label{potbds3}
\end{align}
\end{rem}

\begin{proof}[Proof of Lemma \ref{l:auxest}]
Expanding the left hand side in \eqref{bdryint2}, we find
\begin{multline}
\langle(V_2-V_1)U_h^{1},U_h^{2}\rangle_{L^2(\Sigma(M))}\\
=\frac12\int_M \e^{2\i\psi/h}F_1^{-1}\bar F_2^{-1}Q(a+r_h^{1})\overline{r_h^{2}}+2\bar F_1\bar F_2^{-1}\star(\bar A\wedge s_h^{1})\overline{r_h^{2}} \,\d V_g\\
+\i\int_M F_1^{-1}F_2(a+r_h^{1})A\wedge\star\overline{(b+s_h^{2})}.
\end{multline}
where $A=A_2-A_1, Q=Q_2-Q_1, a$ can be any holomorphic function and $b$ any antiholomorphic 1-form. The next step will be to use the estimates from Proposition \ref{p:diagsols}. Under the assumption in \eqref{aprioriest} and the following remark, we get (by applying Cauchy-Schwarz inequality) the estimates
\begin{align}
\left|\int_M \e^{2\i\psi/h}F_1^{-1}\bar F_2Q(a+r_h^{1})\overline{r_h^{2}}\,\d V_g\right|&\leq C\left(\frac{h^{1/2+\eps}}{\delta^4}+\frac{h^{1+2\eps}}{\delta^8}\right)\leq \frac{Ch^{1/2+\eps}}{\delta^4},\label{potintest1}\\
\left|\int_M \bar F_1\bar F_2^{-1}\star(\bar A\wedge s_h^{1})\overline{r_h^{2}}\,\d V_g\right|&\leq \frac{Ch^{1+2\eps}}{\delta^8}\leq \frac{Ch^{1/2+\eps}}{\delta^4},\label{potintest2}\\
\left|\int_M F_1^{-1}F_2(a+r_h^{1})A\wedge\star\overline{(b+s_h^{2})}\right|&\leq\left|\int_M aF_1^{-1}F_2A\wedge\star\overline{b}\right|+\frac{Ch^{1/2+\eps}}{\delta^4}.\label{potintest3}
\end{align}

By examining the boundary term in \eqref{bdryint2} we find
\[
\langle U_h^{1}-U_h,U_h^{2}\rangle_{\del M}=\int_{\del M}\iota_{\del M}^{*}\left((u_1-u)\star\overline{\omega_2}-\star(\omega_1-\omega)\overline{u_2}\right),
\]
where we temporarily have abbreviated the solutions according to the earlier convention, $U_h^{j}=(u_j,\omega_j)^{T}$. By Cauchy-Schwarz inequality,
\begin{multline*}
|\langle U_h^{1}-U_h,U_h^{2}\rangle_{\del M}|\\
\leq2\|(u_1-u,\nabla_\nu^{X_1}u_1-\nabla_\nu^{X_2}u)|_{\del M}\|_{H^{1/2}(\del M)\oplus H^{-1/2}(\del M)}\|\iota_{\del M}^*U_h^2\|_{H^{1/2}(\del M)}\\
\leq Cd((u_1,\nabla_\nu^{X_1}u_1),(u,\nabla_\nu^{X_2}u))\|U_h^1\|_{H^{1}(\Sigma(M))}\|U_h^2\|_{H^{1}(\Sigma(M))}
\end{multline*}
by the boundedness of the trace operator. Since this holds for any solution $U_h$ to $(D+V_2)U_h=0$ it follows by taking infimum over the corresponding Cauchy data space that
\begin{multline}\label{bdryest1}
|\langle(V_2-V_1)U_h^1,U_h^2\rangle_{L^2(\Sigma(M))}|
\leq Cd(\mathcal{C}_{1},\mathcal{C}_{2})\|U_h^1\|_{H^{1}(\Sigma(M))}\|U_h^2\|_{H^{1}(\Sigma(M))}.
\end{multline}
The proof is finished by rearranging the terms in \eqref{bdryint2} and applying the triangle inequality.
\end{proof}

The next step is to estimate the Sobolev norms of the solutions appearing in the right hand side of \eqref{bdryest1}. This requires a quite detailed discussion but will yield results that we will use more than once.

\subsection{$H^{1}$-estimates of solutions to $(D+V)U=0$}\label{sec:solest}
We will need to estimate solutions of systems $(D+V)U=0$ in $H^{1}$-norm sense. For the general (non-diagonal) potentials $V$, that we must consider, we saw that solutions were given by \eqref{systsol1} and \eqref{systsol2}. Our goal will be to establish:
\begin{prop}
Let $U_1,U_2$ be the solutions given in Proposition \ref{p:diagsols} with $\tilde Q,\tilde F\in W^{1,p}(M), p>2$. Then there are constants $c>0, D=D(K,M,p)>0$ such that
\begin{equation}\label{solutionests2}
\max\{\|U_1\|_{H^{1}(M,\Sigma(M))},\|U_2\|_{H^{1}(M,\Sigma(M))}\}\leq D\e^{c/h},
\end{equation}
for small $h>0$.
\end{prop}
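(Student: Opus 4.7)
The plan is to exploit the explicit Carleman-weight form of the solutions from Proposition \ref{p:diagsols}. Set $c_0:=\max_M|\Re\,\Phi_{\hat p}|$, so that the weights $\e^{\pm\Phi_{\hat p}/h}$ have $L^\infty$-norm at most $\e^{c_0/h}$, while each derivative produces an extra factor of $h^{-1}$. Since $h^{-k}\e^{c_0/h}\leq D\e^{c/h}$ for any $c>c_0$ and $h$ sufficiently small, it will suffice to bound $\|U_j\|_{H^1}$ by a polynomial in $h^{-1}$ times $\e^{c_0/h}$.

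For definiteness take $U=G_h=(u,\omega)^T$ with $u=\e^{\Phi_{\hat p}/h}(a+r_h)$ and $\omega=\e^{\bar\Phi_{\hat p}/h}s_h$; the case $U=F_h$ is symmetric. From \eqref{rem-ests} we get immediately
\[
\|u\|_{L^2(M)}+\|\omega\|_{L^2(\Lambda^{0,1}(M))}\leq C\e^{c_0/h}.
\]
The Leibniz rule then gives
\[
\nabla u = h^{-1}(\nabla\Phi_{\hat p})\e^{\Phi_{\hat p}/h}(a+r_h)+\e^{\Phi_{\hat p}/h}\nabla(a+r_h),
\]
with an analogous identity for $\nabla\omega$, so that
\[
\|\nabla u\|_{L^2}+\|\nabla\omega\|_{L^2}\leq Ch^{-1}\e^{c_0/h}+C\e^{c_0/h}\bigl(\|r_h\|_{H^1(M)}+\|s_h\|_{H^1(\Lambda^{0,1}(M))}\bigr).
\]

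The crux is therefore a uniform-in-$h$ bound $\|r_h\|_{H^1},\|s_h\|_{H^1}\leq C$. Recall from the proof of Proposition \ref{p:diagsols} that $r_h=-\dbar_\psi^{-1}(\tilde F b)+S_hr_h$ with $S_h=\dbar_\psi^{-1}\tilde F\dbar_\psi^{*-1}\tilde Q$. Lemma \ref{l:dbarinv}(iii) and Lemma \ref{l:dbar*inv}(iii) applied with $p=2$ yield the boundedness $\dbar^{-1}:L^2(\Lambda^{0,1}(N))\to H^1(N)$ and $\dbar^{*-1}:L^2(N)\to H^1(\Lambda^{0,1}(N))$. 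Since the Carleman twist $\e^{\pm 2\i\psi/h}$ has unit modulus it acts as an isometry on $L^2$, so the twisted inverses $\dbar_\psi^{-1},\dbar_\psi^{*-1}:L^2\to H^1$ are bounded \emph{uniformly} in $h$. Combined with $\tilde F,\tilde Q\in L^\infty(M)$ (Sobolev embedding $W^{1,p}\hookrightarrow L^\infty$ since $p>2$), this gives $\|S_hr_h\|_{H^1}\leq C\|r_h\|_{L^2}\leq Ch^{1/2+\eps}/\delta^4$ by \eqref{rem-ests}, whence $\|r_h\|_{H^1}\leq C$ provided $h^{1/2+\eps}/\delta^4$ is small. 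The same reasoning controls $\|s_h\|_{H^1}$; assembling everything yields $\|U\|_{H^1(\Sigma(M))}\leq Ch^{-1}\e^{c_0/h}\leq D\e^{c/h}$ as required.

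The main obstacle is precisely this uniform $H^1$-control of the remainders. The Neumann series $r_h=\sum_j S_h^j g$ converges in $L^2$ because $\|S_h\|_{L^2\to L^2}$ is small, but it is not obviously summable in $H^1$, where $S_h$ is merely bounded and not a contraction. The fix is to avoid iterating in $H^1$ altogether: one exploits the smoothing of the twisted inverses together with the already-known $L^2$-smallness of $r_h$, so that a single application of $\dbar_\psi^{-1}\tilde F\dbar_\psi^{*-1}\tilde Q$ converts $L^2$-smallness of the remainder into an $H^1$-bound, sidestepping any $H^1$-contraction argument.
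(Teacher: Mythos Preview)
Your argument is correct and is essentially the same as the paper's. Both proofs reduce to a uniform-in-$h$ $H^1$ bound on the remainders $r_h,s_h$, obtained by writing $r_h=-\dbar_\psi^{-1}(\tilde F(b+s_h))$ (and the analogous equation for $s_h$) and using the already-established $L^2$-smallness of $r_h,s_h$ together with the fact that the oscillatory factor $\e^{\pm 2\i\psi/h}$ sits only on the $L^2$-input side of $\dbar^{-1}$. The only cosmetic difference is that you invoke Lemma~\ref{l:dbarinv}(iii) ($\dbar^{-1}:L^2\to H^1$ bounded) as a black box, whereas the paper unpacks this into the two ingredients $\dbar\dbar^{-1}=\mathrm{Id}$ and the $L^2$-boundedness of the Beurling-type operator $\del\dbar^{-1}$; these are equivalent formulations. (One minor slip: you set up the computation for $G_h$ but then quote the remainder equation $r_h=-\dbar_\psi^{-1}(\tilde Fb)+S_hr_h$ from the $F_h$ case; the roles of $r_h,s_h$ and $a,b$ are swapped between the two cases, but the argument is symmetric so this is harmless.)
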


\begin{proof}
Let us first recall what we mean with the $H^{1}$-norm of some $U=(u,\omega_{0,1})^T\in\Sigma(M)$.
\begin{align*}
\|U\|_{H^{1}(\Sigma(M))}^2:&=\|u\|_{H^{1}(M)}^2+\|\omega_{0,1}\|_{H^{1}(\Lambda^{1}(M))}^2,\\
\|u\|_{H^{1}(M)}^2&=\|u\|_{L^{2}(M)}^2+\|\nabla u\|_{L^{2}(M)}^2,\\
\|\omega_{0,1}\|_{H^{1}(\Lambda^{1}(M))}^2&=\|\omega_{0,1}\|_{L^{2}(\Lambda^{1}(M))}^2+\|\nabla\omega_{0,1}\|_{L^{2}(\Lambda^{1}(M))}^2.
\end{align*}
For the solutions in \eqref{systsol1} and \eqref{systsol2}, this means that we need to estimate
\begin{align*}
&\|F_1^{-1}\e^{\Phi/h}(a+r_h^{1})\|_{H^{1}(M)},\quad\|\bar F_1\e^{\bar\Phi/h}s_h^{1}\|_{H^{1}(\Lambda^{1}(M))},\\
&\|F_2^{-1}\e^{-\Phi/h}r_h^{2}\|_{H^{1}(M)},\quad\|\bar F_2\e^{-\bar\Phi/h}(b+s_h^{2})\|_{H^{1}(\Lambda^{1}(M))},
\end{align*}
where we are free to choose the holomorphic function $a$ and antiholomorphic 1-form $b$. We start by observing that there is a constant $c>0$ so that
\[
|\e^{\pm\Phi/h}|\leq\e^{c/h},
\]
since $\varphi=\text{Re}\,\Phi$ is harmonic. By the a priori assumptions on the $A_j$:s in \eqref{aprioriA} and the discussion leading to \eqref{potbds2}, we will also have no trouble bounding the first order partial derivatives of the $F_j$ by some constant depending on the a priori bounding constant $K$. 

We need to be a bit careful with the remainders $r_h^{j}, s_h^{j}$. By studying the system \eqref{remsyst} and invoking elliptic regularity we can however see that we are in no danger, assuming sufficient regularity on $\tilde Q,\tilde F$. Rewriting the system we see that,
\[
\begin{cases}
r_h=-\dbar_\psi^{-1}(\tilde F(b+s_h)),\\
s_h=-\dbar_\psi^{*-1}(\tilde Q(a+r_h)).
\end{cases}
\]
Now it is more clear that the right hand side should belong to $H^1(M)$. To get an estimate, observe that by writing out the operators as in Section 2, $\dbar_\psi^{-1}=R\dbar^{-1}\e^{-2\i\psi/h}E, \dbar_\psi^{*-1}=R'\dbar^{*-1}\e^{2\i\psi/h}E'$ we have (on $M$) that for small enough $h>0$ (by Lemma \ref{l:dbarinv}(i), \ref{l:dbar*inv}(i) and Proposition \ref{p:diagsols}),
\begin{align*}
\|\dbar r_h\|_{L^2}&=\|\dbar\dbar^{-1}\e^{-2\i\psi/h}E(\tilde F(b+s_h))\|_{L^2}\leq CK(\|b\|_{L^2}+\|r_h\|_{L^2})\leq C,\\
\|\del s_h\|_{L^2}&=\|\del\dbar^{*-1}\e^{2\i\psi/h}E'(\tilde Q(a+r_h))\|_{L^2}\leq CK(\|b\|_{L^2}+\|r_h\|_{L^2})\leq C.
\end{align*}
Furthermore,
\begin{align*}
\|\del r_h\|_{L^2}&=\|\del\dbar^{-1}\e^{-2\i\psi/h}E(\tilde F(b+s_h))\|_{L^2}\leq CK(\|b\|_{L^2}+\|r_h\|_{L^2})\leq C,\\
\|\dbar s_h\|_{L^2}&=\|\dbar\del^{-1}\star\e^{2\i\psi/h}E(\tilde Q(a+r_h))\|_{L^2}\leq CK(\|b\|_{L^2}+\|r_h\|_{L^2})\leq C.
\end{align*}
since $\del\dbar^{-1},\dbar\del^{-1}$ are bounded operators (related to the so-called Beurling transform) on $L^p(M)$. In local coordinates the kernels are of Calder\'on-Zygmund type, modulo smoothing terms by Lemma \ref{l:dbarinv} and \ref{l:dbar*inv}, c.f. \cite{Vekua1962}.
\end{proof}


\begin{rem}
Continuing the argument in the above proof it is in fact also more or less almost immediate that if $\tilde Q,\tilde F\in W^{k,p}(M), k\geq1, p>2$, then
\[
\|r_h\|_{H^k(M)}+\|s_h\|_{H^k(M)}=O(h^{1-k}).
\]
\end{rem}

\subsection{Conclusion of the system reduction step}

Adding up \eqref{bdryint1}-\eqref{solutionests2} we have managed to show that
\begin{equation}\label{auxest1}
\left|\int_M aF_1F_2^{-1}A\wedge\star\overline{b}\right|\leq C\frac{h^{1/2+\eps}}{\delta^4}+D\e^{c/h}d(\mathcal{C}_{1},\mathcal{C}_{2}).
\end{equation}
Choosing for some $1-\eps<\alpha<1$
\[
h=\frac{c}{\alpha|\log d(\mathcal{C}_{1},\mathcal{C}_{2})|},
\]
we get the estimate (for some large enough $C=C(K,M,p)>0$)
\begin{equation}\label{auxest2}
\left|\int_M aF_1^{-1}F_2A\wedge\star\overline{b}\right|\leq \frac{C}{\delta^4|\log d(\mathcal{C}_{1},\mathcal{C}_{2})|^{1/2+\eps}}.
\end{equation}
Looking at the left hand side-integral we also observe that
\begin{multline*}
-\i\int_M F_2(A_2-A_1)F_1^{-1}a\wedge\star\overline{b}
=\langle\dbar(F_2F_1^{-1}a),b\rangle_{L^2(\Lambda^{1}( M))}\\
=\int_{\del M}\iota_{\del M}^*\left(F_2F_1^{-1}a\star\overline{b}\right)+\langle F_2F_1^{-1}a,\dbar^*b\rangle_{L^2(M)}=\int_{\del M}\iota_{\del M}^*\left(F_2F_1^{-1}a\star\overline{b}\right).
\end{multline*}
The first equality follows from the fact that $F_j^{-1}\dbar F_j=-F_j\dbar F_j^{-1}=A_j$ by construction, while the third equality is just Green's integral identity. Finally, the last equality follows since $b$ is antiholomorphic. Thus we have shown
\begin{lem}\label{l:systred1}
If for some $p>2$, an a priori assumption of type
\[
\max\{\|q_j\|_{W^{1,p}(M)},\|X_j\|_{W^{2,p}(M,\Lambda^{0,1}(\bar M))}\}\leq K
\]
hold, then for some small $\eps>0$,
\begin{equation}\label{auxest3}
\left|\int_{\del M}\iota_{\del M}^*\left(F_2F_1^{-1}a\star\overline{b}\right)\right|\leq \frac{C(K,M,p)}{\delta^4|\log d(\mathcal{C}_{1},\mathcal{C}_{2})|^{1/2+\eps}},
\end{equation}
where $\mathcal{C}_{j}$ is the Cauchy data associated with the operators $L_j=L_{X_j,q_j}$, as defined in \eqref{cauchysystem}, and $a\in\mathcal{H}(M), b\in\Lambda^{0,1}(M)$.
\end{lem}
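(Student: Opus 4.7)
The plan is to string together the bulk identity from Lemma \ref{l:auxest}, the $H^1$ bounds on the CGO solutions from Section \ref{sec:solest}, an optimization of the semiclassical parameter $h$ against the Cauchy-data distance, and finally an integration by parts that converts the resulting interior integral into the desired boundary integral.

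First, I would start from the estimate \eqref{auxest1} of Lemma \ref{l:auxest} and substitute the bound $\|U_h^j\|_{H^1(\Sigma(M))} \leq D\e^{c/h}$ provided by the proposition of Section \ref{sec:solest}. This immediately yields the mixed inequality
\[
\left|\int_M F_1^{-1}F_2\, a\,(A_1-A_2)\wedge\star\overline{b}\right|\leq C\frac{h^{1/2+\eps}}{\delta^4}+D\e^{2c/h}\,d(\mathcal{C}_1,\mathcal{C}_2),
\]
in which the stationary-phase remainder decays in $h$ while the solution term blows up exponentially.

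Next, I would balance these two contributions by choosing $h$ proportional to $1/|\log d(\mathcal{C}_1,\mathcal{C}_2)|$. Specifically, with $h=c/(\alpha|\log d(\mathcal{C}_1,\mathcal{C}_2)|)$ for some $\alpha\in(1-\eps,1)$, the exponential term becomes a small power $d(\mathcal{C}_1,\mathcal{C}_2)^{1-\alpha}$, which is easily dominated by $|\log d(\mathcal{C}_1,\mathcal{C}_2)|^{-(1/2+\eps')}$ for a suitable $0<\eps'<\eps$. The resulting bulk estimate is precisely \eqref{auxest2}. To pass from the bulk to the boundary, I would use the algebraic identity $F_j^{-1}\dbar F_j=\i A_j$ coming from $F_j=\e^{\i\alpha_j}$ and $\dbar\alpha_j=A_j$, together with $\dbar a=0$ (since $a$ is holomorphic), to compute
\[
\dbar\bigl(F_2F_1^{-1}a\bigr)=\i(A_2-A_1)F_2F_1^{-1}a.
\]
Applying Green's identity to $\langle\dbar(F_2F_1^{-1}a),b\rangle_{L^2(\Lambda^1(M))}$ and using $\dbar^*b=0$ (as $b$ is antiholomorphic) reduces the interior integral to $\int_{\del M}\iota_{\del M}^*(F_2F_1^{-1}a\star\overline{b})$, which combined with the preceding bulk estimate gives the claim.

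The main obstacle is not any individual step but rather careful bookkeeping: one has to make sure that the $\delta^{-4}$ factor coming from the refined stationary-phase lemmas of Section \ref{sec:riemann} is preserved by the optimization in $h$, and that the trade-off between the exponential growth of the CGO amplitudes and the $|\log d|^{-\alpha}$ decay leaves a strictly positive power $1/2+\eps'$ of the logarithm. Once the parameter choice is made, the Green's-identity step is essentially forced by the definitions of $F_j$ and of antiholomorphicity, so the whole argument collapses into the inequality stated in the lemma.
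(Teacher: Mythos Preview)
Your proposal is correct and follows exactly the same route as the paper: combine Lemma~\ref{l:auxest} with the $H^1$ bounds \eqref{solutionests2}, optimize $h\sim 1/|\log d(\mathcal{C}_1,\mathcal{C}_2)|$, and then convert the interior integral into the boundary integral via $\dbar(F_2F_1^{-1}a)=\i(A_2-A_1)F_2F_1^{-1}a$ together with Green's formula and $\dbar^*b=0$. The only cosmetic slip is that your displayed inequality carries $\e^{2c/h}$ while your subsequent computation treats it as $\e^{c/h}$; this is harmless after renaming $c$ (or taking $h=2c/(\alpha|\log d|)$), and the paper itself absorbs the factor of $2$ in the same way.
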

Later we will choose $\delta = (\log|\log d(\mathcal{C}_{1},\mathcal{C}_{2})|)^{-\eps'}$ for some $\eps'<1/2+\eps$ so that \eqref{auxest3} can be replaced by
\begin{equation}\label{auxest31}
\left|\int_{\del M}\iota_{\del M}^*\left(F_2F_1^{-1}a\star\overline{b}\right)\right|\leq \frac{C}{|\log d(\mathcal{C}_{1},\mathcal{C}_{2})|^{\beta}},
\end{equation}
with $0<\beta<1/2+\eps-\eps'\gamma'$ where $\gamma'>0$ can be choosen arbitrarily small. We will from now on assume that this choice of $\delta$ has been made and use \eqref{auxest31} in place of \eqref{auxest3} when referring to Lemma \ref{l:systred1}.


\section{Reduction to diagonal system}\label{system2}

One of the key observations in \cite{agtu2013} is that if $\mathcal{C}_{1}=\mathcal{C}_{2}$, then choosing $a=1$
\[
\int_{\del M}\iota_{\del M}^*\left(F_2F_1^{-1}\star\overline{b}\right)=0.
\]
(This is also a consequence of Lemma \ref{l:systred1} of course.) The following lemma is then used extensively:
\begin{lem}[Lemma 2.8 in \cite{agtu2013}]\label{l:restholo}
A complex-valued function $f\in H^{1/2}(\del M)$ is the restriction of a holomorphic function if and only if for all 1-forms $\eta\in C^{\infty}(M,\Lambda^{1,0}(M))$ satisfying $\dbar\eta=0$,
\[
\int_{\del M}f\iota_{\del M}^*\eta=0.
\]
\end{lem}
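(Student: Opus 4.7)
The plan is to prove the two directions separately, in both cases exploiting Stokes' theorem together with the vanishing of $(2,0)$-forms on a Riemann surface.

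For the easy direction, suppose $f = F|_{\del M}$ for some holomorphic $F \in H^1(M)$ and let $\eta \in C^\infty(M,\Lambda^{1,0}(M))$ satisfy $\dbar\eta = 0$. The product $F\eta$ is an $H^1$-regular $(1,0)$-form and
\[
\d(F\eta) = \dbar F\wedge\eta + F\,\dbar\eta + \del F\wedge\eta + F\,\del\eta.
\]
The first two terms vanish by hypothesis, while the last two are $(2,0)$-forms, hence zero on a complex curve. Stokes' theorem then yields $\int_{\del M}f\,\iota_{\del M}^{*}\eta = \int_M \d(F\eta) = 0$.

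For the nontrivial direction I would pick any $H^1$-extension $F_0$ of $f$ (the harmonic extension furnished by standard trace theory will do) and set $\omega := \dbar F_0 \in L^2(\Lambda^{0,1}(M))$. If one produces $G \in H^1(M)$ with $G|_{\del M} = 0$ and $\dbar G = \omega$, then $F := F_0 - G$ is holomorphic in $H^1(M)$ with boundary trace $f$, finishing the proof. The core task is therefore solvability of this overdetermined first-order boundary value problem. Viewing
\[
T := \dbar \colon \bigl\{u \in H^1(M) : u|_{\del M} = 0\bigr\} \longrightarrow L^2(\Lambda^{0,1}(M)),
\]
I would show that $T$ has closed range (which follows from the elliptic estimate for $\dbar$ combined with the fact that a holomorphic function vanishing on $\del M$ must be identically zero) and that $\sigma \in (\mathrm{range}\,T)^{\perp}$ if and only if $\eta := \star\bar\sigma \in C^\infty(M,\Lambda^{1,0}(M))$ is holomorphic. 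The latter characterisation is obtained by testing $\langle \dbar u,\sigma\rangle_{L^2} = 0$ against $u \in H^1_0$, which forces $\star\bar\sigma$ to be weakly $\dbar$-closed, and then upgrading to smoothness by elliptic regularity of $\dbar$. With these two facts in hand, $\omega$ lies in the range precisely when $\int_M \omega\wedge\eta = 0$ for every holomorphic $\eta\in C^\infty(M,\Lambda^{1,0}(M))$. A final application of Stokes to $\d(F_0\eta) = \omega\wedge\eta$ rewrites this as $\int_{\del M}f\,\iota_{\del M}^{*}\eta = 0$, which is exactly the assumed condition.

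The main obstacle is the functional-analytic identification of the cokernel of $T$: one must rule out $L^2$-obstructions that are not smooth holomorphic forms, and on a Riemann surface with boundary this requires a boundary version of elliptic regularity for $\dbar$ rather than the interior theory alone. Once this identification is in place, everything else is a tidy application of Stokes' theorem and the collapse of the $(2,0)$ bi-degree.
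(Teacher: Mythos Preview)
Your argument is correct. The paper does not actually prove this lemma; it refers to \cite{gt-gafa} and remarks only that one takes the harmonic extension of $f$ and shows it is holomorphic via the Hodge--Morrey decomposition of $(0,1)$-forms. Your functional-analytic route---closed range of $\dbar:H^1_0(M)\to L^2(\Lambda^{0,1}(M))$ together with the identification of its cokernel as the anti-conjugates of holomorphic $(1,0)$-forms---is precisely a direct derivation of the relevant piece of that Hodge--Morrey splitting, so the two approaches coincide in substance if not in packaging. One small observation that aligns your version with the paper's phrasing: since any holomorphic function is harmonic and the harmonic Dirichlet extension is unique, the holomorphic $F$ you produce is automatically the harmonic extension $F_0$ itself (equivalently, your correction $G$ must vanish when $F_0$ is chosen harmonic), so you are in fact proving exactly the statement the paper sketches.
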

The proof can be found in \cite{gt-gafa} where the above result is contained in Lemma 4.1. One must show that the harmonic extension of $f$ is actually holomorphic, and this can be done by considering the so-called Hodge-Morrey decomposition of $(0,1)$-forms.

Using the above lemma, the authors are thus able to conclude that $(F_2^{-1}F_1)|_{\del M}$ is indeed the restriction of a holomorphic function. Hence they may reduce the case of a general matrix potential $V_j$ to the diagonalized case $\tilde V_j$. 

Clearly, we are not in the same situation here so we need to motivate a similar reduction step in a slightly different way.

\subsection{Auxiliary estimate}

Let us introduce the subspace
\[
U=\{\omega=\iota_{\del M}^*\bar b;\dbar b=0, b\in\Lambda^1(M)\},
\]
of $X=H^{1/2}(\Lambda^1(\del M))$ (or $H^s(\Lambda^1(\del M))$, for any $s\geq0$) and argue abstractly from the viewpoint of Hilbert space theory. Consider the following subspace of the dual space $X^*$,
\[
\ker U:=\{x^*\in X^*;x^*(x)=0 \text{ for all } x\in U\}.
\]
Take a Schauder basis $\{x_n^*\}_{n=1}^\infty$ such that $\ker U=\text{span}\,\{x_n^*\}_{n=1}^\infty$ and a dual basis $\{x_n\}_{n=1}^\infty$ (defined by $x_n^*(x_m)=\delta_{mn}$). Now we claim that 
\[
X=U\oplus\text{span}\,\{x_n\}_{n=1}^\infty.
\]
Indeed, suppose there is $0\neq x^*\in X^*$ is such that $x^*(x)=0$ for all $x\in U\oplus\text{span}\,\{x_n\}_{n=1}^\infty$. Then in particular $x^*(x_n)=0$ for all $n\in\mathbf{N}$. Thus $x^*\notin\ker U$, but at the same time we must have $x^*(x)=0$ for all $x\in U$ and thus $x^*\in\ker U$. An obvious contradiction unless $x^*=0$. To see that it is indeed a direct sum, pick $x\in U\cap\text{span}\,\{x_n\}_{n=1}^\infty$, then $x=\sum a_n x_n\in U$, but then $0=x_n^*(x)=a_n$ for all $n\in\mathbf{N}$.

Now take another Schauder basis $\{y_n\}_{n=1}^\infty$ such that $\text{span}\,\{y_n\}_{n=1}^\infty=U$ and a dual basis $\{y_n^*\}_{n=1}^\infty\subset X^*$, then we can in a very similar way show that
\[
X^*=\ker U\oplus\text{span}\,\{y_n^*\}_{n=1}^\infty.
\]
Introducing the projection
\[
\pi:X^*\to\ker U
\]
we have by the above splitting of $X^*=H^{-1/2}(\Lambda^1(\del M))$ that any linear functional on $X$ may be written
\[
x^*=\pi x^*+(1-\pi)x^*.
\]
Now we claim that
\[
\|(1-\pi)x^*\|=\sup_{\|x\|\leq1}|(1-\pi)x^*(x)|=\sup_{\stackrel{\|x\|\leq1}{x\in U}}|(1-\pi)x^*(x)|.
\]
This equality follows since $(1-\pi)x^*(x_n)=0, n\in\mathbf{N}$ since $(1-\pi)x^*\in\text{span}\,\{y_n^*\}$.
In particular, if we consider the linear functional
\[
A_f:H^{1/2}(\Lambda^{1}(\del M))\to\C,\quad A_f[\omega]:=\int_{\del M}f\omega,\quad f\in H^{-1/2}(\del M),
\]
we can interpret Lemma \ref{l:systred1} as
\[
\|(1-\pi)(F_2F_1^{-1})|_{\del M}\|_{H^{-1/2}(\del M)}\leq\frac{C}{|\log d(\mathcal{C}_{1},\mathcal{C}_{2})|^{\beta}},
\]
since the operator norm of $A_{(1-\pi)(F_2F_1^{-1})|_{\del M}}$ equals the norm in the left hand side by the argument above. 
Furthermore, by the a priori assumptions in \eqref{aprioriA}-\eqref{potbds3}, we have for some $\delta'>0$, and interpolation
\[
\|(1-\pi)(F_2F_1^{-1})|_{\del M}\|_{H^{r}(\del M)}\leq\frac{C}{|\log d(\mathcal{C}_{1},\mathcal{C}_{2})|^{\gamma(\alpha,\beta)}},
\]
where $0<\alpha<1, 0<\gamma(\alpha,\beta)<(1-\alpha)\beta$ and $r>3/2$.

Now we have arrived at a stage where we have
\[
F_2F_1^{-1}=(1-\pi)F_2F_1^{-1}+\pi F_2F_1^{-1}
\]
and by Lemma \ref{l:restholo} we know that
\[
\pi F_2F_1^{-1}|_{\del M}=G|_{\del M},\quad\dbar G=0.
\]
Thus
\[
F_2|_{\del M}=F_1(1-\pi)F_2F_1^{-1}|_{\del M}+F_1G|_{\del M}
\]
which is equivalent with
\[
F_2|_{\del M}-\tilde F_1 |_{\del M}=F_1(1-\pi)F_2F_1^{-1}|_{\del M},\quad \tilde F_1=F_1G.
\]
Clearly it also holds that
\[
\dbar\tilde F_1=\i A_1\tilde F_1,\quad\dbar F_2=\i A_2{F_2}
\]
and by our calculations above and a priori assumptions on the $A_j$:s
\[
\|(F_2-\tilde F_1)|_{\del M}\|_{H^{r}(\del M)}\leq\frac{C}{|\log d(\mathcal{C}_{1},\mathcal{C}_{2})|^{\gamma}}.
\]
So by Sobolev embedding we can conclude the following lemma:
\begin{lem}\label{l:cdlog}
Let $\tilde F_1, F_2$ be as defined above, then there is a $0<\gamma\leq1/2$ such that
\begin{equation}\label{F-ests1}
\|(F_2-\tilde F_1)|_{\del M}\|_{C^{1}(\del M)}\leq\frac{C}{|\log d(\mathcal{C}_{1},\mathcal{C}_{2})|^{\gamma}},
\end{equation}
where $C=C(K,M,p)$ and $d(\mathcal{C}_{1},\mathcal{C}_{2})$ measures the distance between the Cauchy data spaces as defined above.
\end{lem}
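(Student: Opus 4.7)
The plan is to unwind the identity
\[
(F_2-\tilde F_1)|_{\del M}=F_1\cdot(1-\pi)(F_2F_1^{-1})|_{\del M}
\]
established just before the lemma statement, and to combine it with the already-derived interpolation bound
\[
\|(1-\pi)(F_2F_1^{-1})|_{\del M}\|_{H^{r}(\del M)}\leq\frac{C}{|\log d(\mathcal{C}_{1},\mathcal{C}_{2})|^{\gamma(\alpha,\beta)}},\qquad r>3/2,
\]
which is the key quantitative input. The final step is then essentially a Sobolev embedding plus a multiplicative estimate, as the author indicates in the paragraph preceding the lemma.

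First I would exploit the fact that $\del M$ is one-dimensional: on a compact one-manifold, $H^{r}(\del M)\hookrightarrow C^{1}(\del M)$ for any $r>3/2$. Applying this embedding to the $H^r$ bound above immediately yields
\[
\|(1-\pi)(F_2F_1^{-1})|_{\del M}\|_{C^{1}(\del M)}\leq\frac{C}{|\log d(\mathcal{C}_{1},\mathcal{C}_{2})|^{\gamma}}
\]
for some $\gamma\in(0,1/2]$, by choosing $\alpha\in(0,1)$ small enough that the exponent $\gamma(\alpha,\beta)$ furnished by interpolation is strictly positive while keeping $r>3/2$.

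Second, I would check that $F_1|_{\del M}$ is bounded in $C^{1}(\del M)$ uniformly in terms of the a priori constant $K$. Since $A_1\in W^{1,p}$ with $p>2$ and $\dbar\alpha_1=A_1$, the right-inverse result of Lemma \ref{l:dbarinv} gives $\alpha_1\in W^{2,p}(M)$; Sobolev embedding $W^{2,p}\hookrightarrow C^{1,\theta}$ (on a surface, for $p>2$) and the trace theorem then produce a $C^{1}$-bound on $\alpha_1|_{\del M}$, hence on $F_1|_{\del M}=e^{\i\alpha_1}|_{\del M}$, depending only on $K,M,p$. Because $C^{1}(\del M)$ is a Banach algebra (on a compact manifold), multiplying the previous estimate by this bounded factor preserves the logarithmic decay and gives the claimed
\[
\|(F_2-\tilde F_1)|_{\del M}\|_{C^{1}(\del M)}\leq\frac{C(K,M,p)}{|\log d(\mathcal{C}_{1},\mathcal{C}_{2})|^{\gamma}}.
\]

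There is no real obstacle; the whole step is bookkeeping around the interpolation, whose core work was already done in the previous subsection. The only point requiring a little care is tracking the admissible range of $\gamma$: one must simultaneously arrange $r>3/2$ (to invoke the Sobolev embedding into $C^1$) and $\gamma(\alpha,\beta)>0$ (to retain a nontrivial logarithmic decay). Both are possible because of the freedom in the interpolation parameter $\alpha\in(0,1)$ together with the high-regularity a priori control $F_j\in W^{2,p}$ inherited from the $W^{2,p}$-bounds on $X_j$.
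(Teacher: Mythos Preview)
Your proposal is correct and mirrors the paper's own argument: the paper also uses the identity $(F_2-\tilde F_1)|_{\del M}=F_1(1-\pi)(F_2F_1^{-1})|_{\del M}$, the interpolated $H^r$ bound with $r>3/2$, the a~priori control on $F_1$ coming from the $X_j$ assumptions, and Sobolev embedding into $C^1(\del M)$. The only cosmetic difference is the order of operations (the paper multiplies by $F_1$ in $H^r$ and then embeds, whereas you embed first and then multiply in the algebra $C^1$), which is immaterial.
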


\subsection{An inequalty between Cauchy data}
From \eqref{systsol1}-\eqref{systsol2} we see that solutions to systems $(D+V_j)U_j=0$ is related to the corresponding system $(D+\tilde V_j)\tilde U_j=0$ with diagonalized potentials $\tilde V_j$ (in the sense described above) via
\[
\tilde U_1=\left[\begin{array}{cc}\tilde{F}_1 & 0\\0 & \bar{\tilde{F}}_1^{-1}\end{array}\right] U_1,\quad\tilde U_2=\left[\begin{array}{cc}F_2 & 0\\0 & \bar{F}_2^{-1}\end{array}\right]U_2.
\]
The next lemma relates the distances for Cauchy data for the diagonalized potential $\tilde V_j$ and the Cauchy data for the corresponding partial differential equations with the pairs $(X_j,q_j)$.
\begin{lem}\label{l:syst-cauchy}
Suppose that we have reduced the boundary value problems for $L_ju_j=0$ to boundary value problems with diagonal potential matrices $(D+\tilde V_j)\tilde U_j=0$ as above. Then the distance of Cauchy data corresponding to the diagonalized problems is bounded in terms of the distance between Cauchy data for the initial problem according to 
\[
d'(\mathcal{C}_{\tilde V_1},\mathcal{C}_{\tilde V_2})\leq C\left(d(\mathcal{C}_{1},\mathcal{C}_{2})+\frac1{|\log d(\mathcal{C}_{1},\mathcal{C}_{2})|^{\gamma}}\right).
\]
The quantities $d'(\mathcal{C}_{\tilde V_1},\mathcal{C}_{\tilde V_2})$ and $d(\mathcal{C}_{1},\mathcal{C}_{2})$ are those defined in Section \ref{s:dist-cd} and $0<\gamma\leq1/2$.
\end{lem}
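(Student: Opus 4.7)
The plan is to take an arbitrary element of $\mathcal{C}_{\tilde V_1}$, pull it back through the first diagonalizing conjugation to produce Cauchy data in $\mathcal{C}_1$, invoke the hypothesis on $d(\mathcal{C}_1,\mathcal{C}_2)$ to approximate it by Cauchy data in $\mathcal{C}_2$, and push the resulting solution forward through the second diagonalizing conjugation to land in $\mathcal{C}_{\tilde V_2}$. The boundary discrepancy then splits into a piece controlled by $d(\mathcal{C}_1,\mathcal{C}_2)$ and a piece controlled by the boundary mismatch between the two conjugating factors $\tilde F_1$ and $F_2$, the latter being precisely what Lemma \ref{l:cdlog} bounds.

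Concretely, fix $(\tilde f_1,\tilde\lambda_1)\in\mathcal{C}_{\tilde V_1}$ and let $\tilde U_1$ be the corresponding $H^1$-solution of $(D+\tilde V_1)\tilde U_1=0$. Since $\dbar\tilde F_1=\i A_1\tilde F_1$, the calculation leading to \eqref{dbarreduction2} applies with $\tilde F_1$ in place of $F_1$, so $U_1:=\mathrm{diag}(\tilde F_1^{-1},\bar{\tilde F}_1)\tilde U_1$ solves $(D+V_1)U_1=0$; hence its first component $u_1=\tilde F_1^{-1}\tilde u_1$ satisfies $L_1u_1=0$ and gives $(f_1,g_1)\in\mathcal{C}_1$ with $f_1=\tilde F_1^{-1}|_{\del M}\tilde f_1$. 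By \eqref{potbds3} and Sobolev embedding, $\tilde F_1|_{\del M}\in C^1(\del M)$ is bounded above and away from $0$ in terms of $K$, so multiplication by $\tilde F_1^{\pm1}$ is bounded on $H^{\pm 1/2}(\del M)$ and $\|\tilde f_1\|_{H^{1/2}(\del M)}\simeq\|f_1\|_{H^{1/2}(\del M)}$. By the definition of $d(\mathcal{C}_1,\mathcal{C}_2)$, for any $\varepsilon>0$ we may choose $(f_2,g_2)\in\mathcal{C}_2$ coming from a solution $u_2$ of $L_2u_2=0$ with
\[
\|f_1-f_2\|_{H^{1/2}(\del M)}+\|g_1-g_2\|_{H^{-1/2}(\del M)}\leq (d(\mathcal{C}_1,\mathcal{C}_2)+\varepsilon)\|f_1\|_{H^{1/2}(\del M)}.
\]
Assemble $U_2:=(u_2,(\dbar+\i A_2)u_2)^T$, which solves $(D+V_2)U_2=0$, and then $\tilde U_2:=\mathrm{diag}(F_2,\bar F_2^{-1})U_2$, which solves $(D+\tilde V_2)\tilde U_2=0$ and supplies a candidate trace in $\mathcal{C}_{\tilde V_2}$.

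On $\del M$ we decompose
\[
\tilde U_1-\tilde U_2=\mathrm{diag}(\tilde F_1,\bar{\tilde F}_1^{-1})(U_1-U_2)+\bigl[\mathrm{diag}(\tilde F_1,\bar{\tilde F}_1^{-1})-\mathrm{diag}(F_2,\bar F_2^{-1})\bigr]U_2.
\]
Multiplication by $\tilde F_1,F_2\in C^1(\del M)$ is bounded on both $H^{1/2}(\del M)$ and $H^{-1/2}(\del M)$ with operator norms depending only on $K$, so Proposition \ref{CDcmp1} applied to the trace of $U_1-U_2$ and the choice of $(f_2,g_2)$ bounds the first term by $C(d(\mathcal{C}_1,\mathcal{C}_2)+\varepsilon)\|f_1\|_{H^{1/2}(\del M)}$. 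For the second term, Lemma \ref{l:cdlog} yields $\|(F_2-\tilde F_1)|_{\del M}\|_{C^1(\del M)}\leq C|\log d(\mathcal{C}_1,\mathcal{C}_2)|^{-\gamma}$, and a standard Dirichlet-to-Neumann bound for $L_1$ combined with the triangle inequality gives $\|\iota_{\del M}^*U_2\|_{H^{1/2}\oplus H^{-1/2}}\leq C\|f_1\|_{H^{1/2}(\del M)}$. Dividing by $\|\tilde f_1\|\simeq\|f_1\|$ and letting $\varepsilon\to 0$ yields the bound for $\sup_{\mathcal{C}_{\tilde V_1}}\inf_{\mathcal{C}_{\tilde V_2}}$; the other direction is obtained symmetrically by interchanging the indices, since $\|(\tilde F_1-F_2)|_{\del M}\|_{C^1}$ is unchanged.

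The main obstacle is obtaining the Dirichlet-to-Neumann type estimate $\|g_1\|_{H^{-1/2}(\del M)}\leq C\|f_1\|_{H^{1/2}(\del M)}$ uniformly in $K$, which is needed to absorb all error terms into a multiple of $\|f_1\|_{H^{1/2}(\del M)}$; this is standard under the tacit assumption, common to this line of work, that $0$ is not a Dirichlet eigenvalue of $L_j$, and can always be arranged by a harmless a priori perturbation.
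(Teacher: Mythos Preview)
Your proof is correct and follows essentially the same route as the paper: the same conjugation by $\mathrm{diag}(\tilde F_1,\bar{\tilde F}_1^{-1})$ versus $\mathrm{diag}(F_2,\bar F_2^{-1})$, the same two-term decomposition of $\tilde U_1-\tilde U_2$ on $\del M$, and the same appeal to Lemma~\ref{l:cdlog} for the $(\tilde F_1-F_2)|_{\del M}$ piece. You are in fact more explicit than the paper about the Dirichlet-to-Neumann bound needed to control $\|\iota_{\del M}^*U_2\|$ by $\|f_1\|_{H^{1/2}(\del M)}$; the paper invokes this implicitly in the line $\|\iota_{\del M}^*\star\omega_2\|_{H^{-1/2}(\del M)}\leq C\|u_2|_{\del M}\|_{H^{1/2}(\del M)}$.
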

\begin{proof}
\begin{multline*}
\|(\tilde u_1|_{\del M},\iota_{\del M}^*\star\tilde\omega_1)-(\tilde u_2|_{\del M},\iota_{\del M}^*\star\tilde\omega_2)\|_{H^{1/2}(\Sigma(\del M))}\\
=\|(\tilde F_1 u_1-F_2u_2)|_{\del M}\|_{H^{1/2}(\del M)}+\|\iota_{\del M}^{*}\star(\bar{\tilde{F_1}}^{-1} \omega_1-\bar F_2^{-1}\omega_2)\|_{H^{-1/2}(\Lambda^1(\del M))}
\end{multline*}\label{partialest1}
Starting with the first term
\begin{multline}
\|(\tilde F_1 u_1-F_2u_2)|_{\del M}\|_{H^{1/2}(\del M)}
\leq\|\tilde F_1|_{\del M}\|_{C^{1}(\del M)}\|(u_1-u_2)|_{\del M}\|_{H^{1/2}(\del M)}\\+\|(\tilde F_1-F_2)|_{\del M}\|_{C^{1}(\del M)}\|u_2|_{\del M}\|_{H^{1/2}(\del M)}\\
\leq C\left(\|(u_1-u_2)|_{\del M}\|_{H^{1/2}(\del M)}+\frac{\|u_2|_{\del M}\|_{H^{1/2}(\del M)}}{|\log d(\mathcal{C}_{1},\mathcal{C}_{2})|^{\gamma}}\right),
\end{multline}
where the last inequality follows from Lemma \ref{l:cdlog}. Similarly, the second term satisfies
\begin{multline}\label{partialest2}
\|\iota_{\del M}^{*}\star(\bar{\tilde{F_1}}^{-1} \omega_1-\bar F_2^{-1}\omega_2)\|_{H^{1/2}(\Lambda^1(\del M))}\\
=\left\|\iota_{\del M}^{*}\left(\frac{\bar F_2\bar{\tilde{F_1}}\bar{\tilde{F_1}}^{-1}\star\omega_1-\bar F_2\bar{\tilde{F_1}}\bar F_2^{-1}\star\omega_2}{\bar F_2\bar{\tilde{F_1}}}\right)\right\|_{H^{1/2}(\Lambda^1(\del M))}\\
\leq C\left(\|\iota_{\del M}^{*}(\bar F_2\star(\omega_1-\omega_2))\|_{H^{1/2}(\Lambda^1(\del M))}+\|\iota_{\del M}^{*}((\bar F_2-\bar{\tilde{F}}_1)\star\omega_2)\|_{H^{1/2}(\Lambda^1(\del M))}\right)\\
\leq C\left(\|\iota_{\del M}^{*}\star(\omega_1-\omega_2)\|_{H^{1/2}(\Lambda^1(\del M))}+\frac{\|\iota_{\del M}^{*}\star\omega_2\|_{H^{1/2}(\Lambda^1(\del M))}}{|\log d(\mathcal{C}_{1},\mathcal{C}_{2})|^{\gamma}}\right).
\end{multline}
Adding \eqref{partialest1} and \eqref{partialest2} we get
\begin{multline*}
\|((\tilde u_1-\tilde u_2)|_{\del M},\iota_{\del M}^*\star(\tilde\omega_1-\tilde\omega_2))\|_{H^{1/2}(\Sigma(\del M))}\\
\leq C\|((u_1-u_2)|_{\del M},\iota_{\del M}^*\star(\omega_1-\omega_2))\|_{H^{1/2}(\Sigma(\del M))}\\+C\frac{\|(u_2|_{\del M},\iota_{\del M}^*\star\omega_2)\|_{H^{1/2}(\Sigma(\del M))}}{|\log d(\mathcal{C}_{1},\mathcal{C}_{2})|^{\gamma}}.
\end{multline*}
This implies that
\begin{multline*}
\frac{\|((\tilde u_1-\tilde u_2)|_{\del M},\iota_{\del M}^*\star(\tilde\omega_1-\tilde\omega_2))\|_{H^{1/2}(\Sigma(\del M))}}{\|\tilde u_2|_{\del M}\|_{H^{1/2}(\del M)}}\\
\leq C\left(\frac{\|((u_1-u_2)|_{\del M},\iota_{\del M}^*\star(\omega_1-\omega_2))\|_{H^{1/2}(\Sigma(\del M))}}{\|u_2|_{\del M}\|_{H^{1/2}(\del M)}}+\frac1{|\log d(\mathcal{C}_{1},\mathcal{C}_{2})|^{\gamma}}\right),
\end{multline*}
since $F_2^{-1}$ is bounded from below and 
$\|\iota_{\del M}^*\star\omega_2\|_{H^{-1/2}(\del M)}\leq C\|u_2|_{\del M}\|_{H^{1/2}(\del M)}$. As we have a bijective correspondence between solutions to the $L_j,V_j$ and $\tilde V_j$ problems, it follows that
\[
d'(\mathcal{C}_{\tilde V_1},\mathcal{C}_{\tilde V_2})\leq C\left(d(\mathcal{C}_{1},\mathcal{C}_{2})+\frac1{|\log d(\mathcal{C}_{1},\mathcal{C}_{2})|^{\gamma}}\right)
\]
which is what we wanted to prove.
\end{proof}

The important conclusion of Lemma \ref{l:syst-cauchy} is that small differences in Cauchy data for the non-diagonalized $V_j$-case implies small differences in Cauchy data for the diagonal counterparts. This will allow us to deduce stability by considering the latter case.

\subsection{Estimates for the potential $q$ and magnetic field $\d X$}

We are now ready to complete the proof of Theorem \ref{mainthm1} (and \ref{mainthm2}). Recall our a priori assumptions \eqref{main-apriori1}:
\[
\|q_j\|_{W^{1,p}(M)}\leq K, \quad\|X_j\|_{W^{2,p}(T^*M)}\leq K,\quad j=1,2.
\]
If $\mathcal{C}_j$ are the Cauchy data spaces as defined in \eqref{CauchyData} for the corresponding magnetic Schrödinger operators $L_j:=L_{X_j,q_j}$, as defined in \eqref{laplaceX}-\eqref{SchrodingerX}. Then if the distance $d(\mathcal{C}_1,\mathcal{C}_2)$ is small enough, 
there is an $\alpha\in(0,1/2)$ such that
\begin{equation}\label{mainthmest11}
\|q_1-q_2\|_{L^2(M)}+\|\d(X_1-X_2)\|_{L^2(\Lambda^2(M))}\leq\frac{C}{\log^\alpha\log\frac{1}{d(\mathcal{C}_1,\mathcal{C}_2)}},
\end{equation}
where $C=C(K,M,\alpha)$.

We will split up the proof into three parts, using our reduction to a diagonal system as described in the above sections. 

Suppose that $\tilde U_h^{1}, \tilde U_h^{2}$ are the earlier constructed $H^{1}$-solutions to
\[
(D+\tilde V_1)\tilde U_h^{1}=0,\quad (D+\tilde V_2^{*})\tilde U_h^{2}=0
\]
respectively and that $\tilde U_h$ solves $(D+\tilde V_2)\tilde U_h=0$. By Lemma \ref{boundaryintid} we then have
\[
\langle(\tilde V_2-\tilde V_1)\tilde U_h^{1},\tilde U_h^{2}\rangle_{L^2(\Sigma(\bar M))}=\langle \tilde U_h^{1}-\tilde U_h,\tilde U_h^{2}\rangle_{\del M}.
\]
To abbreviate we introduce
\[
\tilde{V}=\left[\begin{array}{cc}\tilde{Q} & 0\\ 0 &\tilde{F}\end{array}\right]=\left[\begin{array}{cc} |F_2|^{-2}Q_2/2-|F_1|^{-2}Q_1/2 & 0\\ 0 &-|F_2|^{2}+|F_1|^{2}\end{array}\right]=\tilde V_2-\tilde V_1.
\]

\begin{lem}\label{l:mainpf1}
For 
$p_0\in M\setminus N_{\sqrt\delta}$ there is an $\eps>0$ (that can be chosen uniformly with respect to $p_0$) such that
\[
\max\{|\tilde Q(p_0)|,|\tilde F(p_0)|\}\leq\frac{C}{\delta^7(\log(1/H))^{\eps}}, \text{ where } H=|\log d(\mathcal{C}_1,\mathcal{C}_2)|^{-\gamma},
\]
and $C=C(K,M,p,\gamma,\eps), \delta>0, 0<\gamma\leq1/2$.
\end{lem}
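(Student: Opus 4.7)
The plan is to adopt the Bukhgeim-style CGO reconstruction scheme, now carried out at the level of the diagonalized system $(D+\tilde V_j)$ and using the modified Morse phase of Lemma \ref{l:qPhip} to control the $\delta$-dependence. First I would fix $p_0\in M\setminus N_{\sqrt\delta}$, set $\Phi:=\Phi_{p_0}$ and $\psi:=\Im\Phi$; by Lemma \ref{l:qPhip}, $\psi$ has $p_0$ as a non-degenerate critical point and the same uniform Hessian bound $|\del_p^2\psi|\gtrsim\delta^2$ holds at every critical point of $\Phi_{p_0}$. Then I would construct the CGO pair from Proposition \ref{p:diagsols} in the form corresponding to Case 2,
\[
\tilde U_h^1=\bigl(\e^{\Phi/h}(a+r_h^1),\,\e^{\bar\Phi/h}s_h^1\bigr)^T,\qquad \tilde U_h^2=\bigl(\e^{-\Phi/h}(c+r_h^2),\,\e^{-\bar\Phi/h}s_h^2\bigr)^T,
\]
solving $(D+\tilde V_1)\tilde U_h^1=0$ and $(D+\tilde V_2^{*})\tilde U_h^2=0$ respectively, with $a,c\in\mathcal{H}(M)$ chosen so that $a(p_0)=c(p_0)=1$ while $a,c$ vanish at the other (finitely many) critical points of $\Phi_{p_0}$; such $a,c$ exist by classical function theory on compact Riemann surfaces with boundary. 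The analogous construction using Case 1 with anti-holomorphic 1-forms $b,d$ peaked at $p_0$ will handle the target $\tilde F(p_0)$.

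Next I would substitute these CGOs into the boundary identity of Lemma \ref{boundaryintid},
\[
\langle\tilde V\tilde U_h^1,\tilde U_h^2\rangle_{L^2(\Sigma(M))}=\langle\tilde U_h^1-\tilde U_h,\tilde U_h^2\rangle_{\del M},
\]
for any $\tilde U_h$ solving $(D+\tilde V_2)\tilde U_h=0$. The exponentials combine to $\e^{\pm 2\i\psi/h}$, turning the bulk term on the left into oscillatory integrals with Morse phase $\psi$. Applying part (2) of Lemma \ref{l:stph}, localized by a cutoff around $p_0$ with the non-stationary region handled by integration by parts against $|\del\psi|\gtrsim\delta^2$, should yield the leading-order expansion
\[
\int_M\tilde Q\,a\bar c\,\e^{2\i\psi/h}\,\d V_g=\frac{ch}{\delta}\tilde Q(p_0)+O\!\left(\frac{h^2}{\delta^7}\right),
\]
the contributions from the other critical points of $\Phi_{p_0}$ being killed by the vanishing of $a,c$ there. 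The purely remainder piece $r_h^1\bar r_h^2$ is bounded, via Proposition \ref{p:diagsols}, by $O(h^{1+2\eps}/\delta^8)$, and the $\tilde F$ oscillatory integral satisfies the same type of bound. For the right-hand boundary term I would, as in the proof of Lemma \ref{l:auxest}, use Cauchy–Schwarz and the trace theorem to bound it by $C\,d'(\mathcal{C}_{\tilde V_1},\mathcal{C}_{\tilde V_2})\|\tilde U_h^1\|_{H^1}\|\tilde U_h^2\|_{H^1}$; combining Lemma \ref{l:syst-cauchy} with the $H^1$-bound \eqref{solutionests2} this becomes $\le CH\e^{2c/h}$, where $H=|\log d(\mathcal{C}_1,\mathcal{C}_2)|^{-\gamma}$. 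Dividing through by $h/\delta$ and choosing the balance $h\asymp 1/\log(1/H)$ renders the exponential contribution negligible and leaves the dominant residual of order $h^{2\eps}/\delta^7\sim\delta^{-7}(\log(1/H))^{-2\eps}$, which is the announced bound; the $\tilde F$ estimate is symmetric.

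The main obstacle I anticipate is the treatment of the mixed cross terms $\int_M\tilde Q\,a\,\bar r_h^2\,\e^{2\i\psi/h}$ and its partner with $r_h^1$. A naive $L^2\times L^2$ Cauchy–Schwarz bound only gives $\|a\|_\infty\|r_h^2\|_{L^2}\lesssim h^{1/2+\eps}/\delta^4$, and after dividing by the leading factor $h/\delta$ this produces $h^{-1/2+\eps}/\delta^3$, which \emph{blows up} as $h\to 0$. To beat this one must exploit the explicit representations $r_h=-\sum_j S_h^j\,\dbar_\psi^{-1}(\tilde F b)$ and $s_h=-\dbar_\psi^{*-1}(\tilde Q r_h)$ from \eqref{r_h}--\eqref{s_h}, whose internal $\e^{\pm 2\i\psi/h}$-factor permits a further integration by parts against $\del\psi$ to gain the additional half-power of $h$ needed for genuine smallness. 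A secondary technical point is that part (2) of Lemma \ref{l:stph} requires $W^{4,\infty}$-regularity of the amplitude while $\tilde Q$ is only $W^{1,p}$; a standard mollification/commutator argument should resolve this, with the regularization error absorbed into the $\delta^{-7}$-loss already incurred from Lemma \ref{l:qPhip}.
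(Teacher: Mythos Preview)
Your proposal is correct and follows essentially the same route as the paper's proof: insert the CGO pairs from Proposition~\ref{p:diagsols} into the boundary identity for the diagonalized system, apply stationary phase with the modified Morse phase of Lemma~\ref{l:qPhip} to extract the leading term $\tfrac{ch}{\delta}\tilde Q(p_0)$ (resp.\ $\tilde F(p_0)$), bound the boundary term by $CH\e^{c/h}$ via Lemma~\ref{l:syst-cauchy} and \eqref{solutionests2}, and balance $h\asymp(\log(1/H))^{-1}$.

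Two minor remarks on points where you are in fact more explicit than the paper. First, the paper simply writes the stationary phase expansion \eqref{spest:1a} with only the $p_0$ contribution and does not discuss the other critical points of $\Phi_{p_0}$; your device of choosing $a,c$ to vanish at those points is a clean way to justify this. Second, your diagnosis of the cross-term obstacle is exactly right, and the paper's resolution is the one you anticipate: it unwinds the remainder via the explicit formulas \eqref{r_h}--\eqref{s_h} and then passes the oscillatory operator to the other factor by duality, invoking the adjoint estimates \eqref{L2est2}--\eqref{L2est3} (rather than a literal integration by parts) to gain the missing $h^{1/2+\eps}$. The regularity mismatch you flag for part~(2) of Lemma~\ref{l:stph} is a genuine technicality that the paper does not address; your mollification remedy is the standard fix.
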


\begin{proof}
We will consider two cases, first when the solutions $\tilde U_h^{j}$ are of the forms
\[
\tilde U_h^{1}=\left(\begin{array}{c} \e^{\Phi/h}(a_1+r_h^{1}) \\ \e^{\bar\Phi/h}s_h^{1} \end{array}\right),\quad\tilde U_h^{2}=\left(\begin{array}{c} \e^{-\Phi/h}(a_2+r_h^{2}) \\ \e^{-\bar\Phi/h}s_h^{2} \end{array}\right),
\]
where $a_1, a_2$ are holomorphic functions. Then we have
\begin{multline}\label{stabest:1a}
 \langle\tilde V\tilde U_h^{1},\tilde U_h^{2}\rangle_{L^2(\Sigma(\bar M))}\\
 =\int_M \tilde{Q}\e^{2\i\psi/h}(a_1\bar a_2+a_1\overline{r_h^2}+r_h^1\bar a_2+r_h^1\overline{r_h^2})\,\d V_g+\tilde{F}\e^{-2\i\psi/h}s_h^1\wedge\star\overline{s_h^2}.
\end{multline}
In particular if $a_1=a_2=a$ we claim that we can estimate, for $h$ small,
\begin{equation}\label{remest:1a}
\int_M \tilde{Q}\e^{2\i\psi/h}(a\overline{r_h^2}+r_h^1\bar a+r_h^1\overline{r_h^2})\,\d V_g+\tilde{F}\e^{-2\i\psi/h}s_h^1\wedge\star\overline{s_h^2}\leq\frac{ C h^{1+\eps}}{\delta^8}.
\end{equation}
Furthermore, if $\psi$ has a non-degenerate stationary point at $p_0\in M$, we claim that
\begin{equation}\label{spest:1a}
\left|\int_M \tilde{Q}\e^{2\i\psi/h}|a|^2\,\d V_g-\frac{ch}{\delta}\e^{2\i\psi(p_0)/h}\tilde Q(p_0)|a(p_0)|^2\right|\leq \frac{Ch^2}{\delta^7},
\end{equation}
where $C$ depends on the a priori bounds on the potentials.

Similarly, if we instead consider solutions
\[
\tilde U_h^{1}=\left(\begin{array}{c} \e^{\Phi/h}r_h^{1} \\ \e^{\bar\Phi/h}(b_1+s_h^{1}) \end{array}\right),\quad\tilde U_h^{2}=\left(\begin{array}{c} \e^{-\Phi/h}r_h^{2} \\ \e^{-\bar\Phi/h}(b_2+s_h^{2}) \end{array}\right),
\]
where $b_1, b_2$ are antiholomorphic 1-forms. Then
\begin{multline}\label{stabest:1b}
 \langle\tilde V\tilde U_h^{1},\tilde U_h^{2}\rangle_{L^2(\Sigma(\bar M))}\\
 =\iint_M \tilde{Q}\e^{2\i\psi/h}r_h^1\overline{r_h^2}\,\d V_g+\tilde{F}\e^{-2\i\psi/h}(b_1\wedge\star\overline{b_2}+b_1\wedge\star\overline{s_h^2}+s_h^1\wedge\star\overline{b_2}+s_h^1\wedge\star\overline{s_h^2}).
\end{multline}
In particular if $b_1=b_2=b$ we claim that we can, similarly as for \eqref{remest:1a}, estimate, for $h$ small,
\begin{equation}\label{remest:1b}
\int_M \tilde{Q}\e^{2\i\psi/h}r_h^1\overline{r_h^2}\,\d V_g+\tilde{F}\e^{-2\i\psi/h}(b\wedge\star\overline{s_h^2}+s_h^1\wedge\star\overline{b}+s_h^1\wedge\star\overline{s_h^2})\leq \frac{C h^{1+\eps}}{\delta^8}.
\end{equation}
Again, if $\psi$ has a non-degenerate stationary point at $p_0\in M$,
\begin{equation}\label{spest:1b}
\left|\int_M \tilde{F}\e^{-2\i\psi/h}b\wedge\star\overline{b}-\frac{ch}{\delta}\e^{-2\i\psi(p_0)/h}\tilde F(p_0)|b(p_0)|^2\right|\leq \frac{Ch^2}{\delta^7},
\end{equation}
where $C$ depends on the a priori bounds on the potentials.

From the discussion in Section \ref{sec:bdry} and Lemma \ref{l:syst-cauchy}, we also have the estimate
\begin{equation}\label{rhsest}
|\langle(\tilde V_2-\tilde V_1)\tilde U_h^{1},\tilde U_h^{2}\rangle_{L^2(\Sigma(\bar M))}|\leq C\e^{c/h}H,
\end{equation}
where we recall that
\[
H=\frac1{|\log d(\mathcal{C}_1,\mathcal{C}_2)|^\gamma}, \quad\text{for some } 0<\gamma\leq1/2.
\]
Combining either \eqref{remest:1a}-\eqref{spest:1a} or \eqref{remest:1b}-\eqref{spest:1b} with \eqref{rhsest}, we get
\begin{eqnarray*}
|Ch\e^{2\i\psi(p_0)/h}\tilde Q(p_0)|a(p_0)|^2|&\leq\frac{ C}{\delta^7}(h^{1+\eps}+\e^{c/h}H),\\
|Dh\e^{-2\i\psi(p_0)/h}\tilde F(p_0)|b(p_0)|^2|&\leq \frac{C}{\delta^7}(h^{1+\eps}+\e^{c/h}H),
\end{eqnarray*}
Equivalently, 
\begin{equation}\label{st-est-aux}
\max\{|\tilde Q(p_0)|,|\tilde F(p_0)|\}\leq \frac{C}{\delta^7}(h^{\eps}+\e^{c/h}H).
\end{equation}
Now, since we assume that $H>0$ is very small, we can choose
\[
h=c((1-\eps)\log(1/H))^{-1}
\]
and get from \eqref{st-est-aux},
\begin{equation}\label{st-est-I}
\max\{|\tilde Q(p_0)|,|\tilde F(p_0)|\}\leq\frac{C}{\delta^7(\log(1/H))^{\eps}}
\end{equation}
for some large enough $C=C(K,M,p,\eps)$.

To justify \eqref{remest:1b} for the terms that are not immediately obvious we use argumets similar to those in \cite{gt-gafa}, e.g.\
\begin{multline*}
\int_M \tilde{F}\e^{-2\i\psi/h}b\wedge\star\overline{s_h^2}=\int_M \tilde{F}\e^{-2\i\psi/h}b\wedge\star\overline{-\dbar_\psi^{*-1}(\tilde Q_2 r_h^2)}\\
=-\int_M \dbar_\psi^{-1}(\tilde{F}\e^{-2\i\psi/h}b)\wedge\star\overline{\tilde Q_2 r_h^2}
\end{multline*}
and use Lemma \ref{l:dbpi} and Proposition \ref{p:diagsols}. Similarly one shows \eqref{remest:1a}.
\end{proof}

Next we will prove an $L^2$-estimate where we take care of those exceptional points that are not included in Lemma \ref{l:mainpf1}.
\begin{lem}\label{l:mainpf2}
There is an $\eps>0$ such that
\[
\||F_1|-|F_2|\|_{L^2(M)}\leq\frac{C}{|\log H|^{2\eps/15}},\text{ where } H=\frac1{|\log d(\mathcal{C}_1,\mathcal{C}_2)|^\gamma},
\]
$C=C(K,M,p,\gamma,\eps)$ and $0<\gamma\leq1/2$.
\end{lem}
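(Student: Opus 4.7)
The plan is to convert the pointwise estimate from Lemma \ref{l:mainpf1} into an $L^2$-estimate by splitting $M$ into the ``good'' region $M\setminus N_{\sqrt\delta}$ where the pointwise bound is available and the ``bad'' region $N_{\sqrt\delta}$ whose area is small, and then optimizing the parameter $\delta$.

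The starting observation is that $\tilde F=|F_1|^2-|F_2|^2=(|F_1|-|F_2|)(|F_1|+|F_2|)$, and by the a priori bounds \eqref{potbds3} we have $c\leq|F_j|\leq C$ uniformly. Hence $\tilde F$ and $|F_1|-|F_2|$ are pointwise comparable, and Lemma \ref{l:mainpf1} furnishes, uniformly for every $p_0\in M\setminus N_{\sqrt\delta}$,
\[
||F_1(p_0)|-|F_2(p_0)||\leq\frac{C}{\delta^7(\log(1/H))^{\eps}}.
\]
On the complementary set, since $N_{\sqrt\delta}$ is a finite union of coordinate disks of radius $\sqrt\delta$ (around the finitely many points of $P_{cv}$), its area is $O(\delta)$; combined with the $L^\infty$-bound on $F_j$ this gives the trivial estimate
\[
\int_{N_{\sqrt\delta}}\bigl||F_1|-|F_2|\bigr|^2\,\d V_g\leq C\delta.
\]

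Adding the two pieces produces
\[
\||F_1|-|F_2|\|_{L^2(M)}^2\leq C\left(\delta+\frac{1}{\delta^{14}(\log(1/H))^{2\eps}}\right),
\]
and balancing the two terms by the choice $\delta^{15}=(\log(1/H))^{-2\eps}$, i.e.\ $\delta=(\log(1/H))^{-2\eps/15}$, yields the stated bound after taking square roots (possibly after relabeling $\eps$ by a factor of two to absorb the square root).

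The main obstacle is to be sure that Lemma \ref{l:mainpf1} can indeed be invoked with a single constant $C$ and a single exponent $\eps$ \emph{uniformly} in the basepoint $p_0\in M\setminus N_{\sqrt\delta}$. This in turn rests on the uniform construction of the phase function $\Phi_{\hat p}$ in Lemma \ref{l:qPhip} (with Hessian bounded below by $c\delta^2$ at every critical point, uniformly in $\hat p$), on the uniform stationary-phase estimate in Lemma \ref{l:stph}, and on the uniform remainder estimates from Lemma \ref{l:dbpi} and Lemma \ref{l:dbp*i}. Granted this uniformity, the argument above is essentially a covering/optimization scheme and no further surgery on the solutions is required.
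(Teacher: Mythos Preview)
Your proof is correct and follows essentially the same approach as the paper: split $M$ into $M\setminus N_{\sqrt\delta}$ (pointwise bound from Lemma \ref{l:mainpf1}) and $N_{\sqrt\delta}$ (area $O(\delta)$ plus $L^\infty$ a priori bounds), then optimize in $\delta$. The only cosmetic difference is that the paper first estimates $\||F_1|^2-|F_2|^2\|_{L^2}$ and divides by $|F_1|+|F_2|$ at the end, whereas you pass from $\tilde F$ to $|F_1|-|F_2|$ pointwise at the start; both are equivalent, and your remark about relabeling $\eps$ to absorb the square root is exactly how the stated exponent is obtained.
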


\begin{proof}
By Lemma \ref{l:mainpf1}
\begin{equation}\label{pw-est1}
|\tilde F(p)|=||F_1(p)|^2-|F_2(p)|^2|\leq\frac{C}{\delta^7|\log H|^\eps},\quad p\in M\setminus N_{\sqrt\delta}.
\end{equation}
It follows from this and the priori assumptions \eqref{main-apriori1} that there is a $C=C(K,M,p,\gamma,\eps)$ such that
\begin{multline*}
\||F_1|^2-|F_2|^2\|_{L^2(M)}^2=\int_{M\setminus N_{\sqrt{\delta}}}||F_1|^2-|F_2|^2|^2\,\d V_g+\int_{N_{\sqrt{\delta}}}||F_1|^2-|F_2|^2|^2\,\d V_g\\
\leq V_g(M)\left(\frac{C}{\delta^7|\log H|^\alpha}\right)^2+C\delta^2\leq C(\delta^{-14}|\log H|^{-2\eps}+\delta).
\end{multline*}
Choosing 
$\delta=|\log H|^{-2\eps/15}$, we get
\begin{equation}\label{l2-est1}
\||F_1|^2-|F_2|^2\|_{L^2(M)}\leq C|\log H|^{-2\eps/15}.
\end{equation}

Using that
\[
|F_1|-|F_2|=\frac{|F_1|^2-|F_2|^2}{|F_1|+|F_2|}
\]
together with the fact that $|F_1|+|F_2|$ is uniformly bounded from below we can then also to conclude that
\begin{equation}\label{l2-est2}
\||F_1|-|F_2|\|_{L^2(M)}\leq C|\log H|^{-2\eps/15}.
\end{equation}
\end{proof}
 
In the last steps of the proof we indicate how to also get higher Sobolev regularity estimates.
\begin{proof}[Proof of Theorem 1.1/1.2]
Since we assume that $X_j\in T^*M, j=1,2$ are real-valued, we can decompose
$X_j=A_j+\overline{A_j}$, and then
\[
\d X_j = \d A_j+\d\overline{A_j}=\d\pi_{0,1}A_j+\d\pi_{1,0}\overline{A_j}=\del A_j+\dbar\overline{A_j}.
\]
Let now $\alpha_j$ be the primitive functions we introduced earlier, that is in the sense that $\dbar\alpha_j=A_j$ and $F_j=\e^{\i\alpha_j}$. Then
\begin{eqnarray*}
\del F_j=\i\del\alpha_jF_j,&\quad\dbar F_j=\i A_j F_j,\\
\del \overline{F_j}=-\i\overline{A_j}\overline{F_j},&\quad\dbar\overline{F_j}=-\i\dbar\overline{\alpha_j}\overline{F_j}.
\end{eqnarray*}
Observe that $-\del\dbar=\dbar\del$ on functions so $\del\dbar\alpha_j=\del A_j$ and $-\del\dbar\overline{\alpha_j}=\dbar\del\overline{\alpha_j}=\dbar\overline{A_j}$. Consider now
\begin{multline*}
\dbar[|F_j|^2]
=\i|F_j|^2(A_j-\dbar\overline{\alpha_j}),\quad
\del\dbar[|F_j|^2]
=\i|F_j|^2\d X_j-4\,\del|F_j|\wedge\star\dbar|F_j|.
\end{multline*}
This implies that
\begin{multline*}
\del\dbar(|F_1|^2-|F_2|^2)
=\i|F_1|^2\,\d(X_1-X_2)+\i(|F_1|^2-|F_2|^2)\,\d X_2\\
-4\left(\del(|F_1|-|F_2|)\wedge\star\dbar|F_2|+\del|F_1|\wedge\star\dbar(|F_1|-|F_2|)\right).
\end{multline*}
The above is equivalent with
\begin{multline*}
\i|F_1|^2\d(X_1-X_2)=\del\dbar(|F_1|^2-|F_2|^2)-\i(|F_1|^2-|F_2|^2)\,\d X_2\\+4\left(\del(|F_1|-|F_2|)\wedge\star\dbar|F_2|+\del|F_1|\wedge\star\dbar(|F_1|-|F_2|)\right),
\end{multline*}
or rewritten,
\begin{multline}\label{dXid}
\d(X_1-X_2)=-\i|F_1|^{-2}\del\dbar(|F_1|^2-|F_2|^2)-|F_1|^{-2}(|F_1|^2-|F_2|^2)\,\d X_2\\-4\i|F_1|^{-2}\left(\del(|F_1|-|F_2|)\wedge\star\dbar|F_2|+\del|F_1|\wedge\star\dbar(|F_1|-|F_2|)\right).
\end{multline}
Next, under the assumptions that $\|F_j\|_{H^{2+\eta}(M)}\leq K$ for some $\eta>0$, then by Sobolev interpolation,
\begin{eqnarray}\label{hs-est3}
\||F_1|-|F_2|\|_{H^s(M)}&\leq C|\log H|^{-\eps'},\label{hs-est3a}\\
\||F_1|^2-|F_2|^2\|_{H^s(M)}&\leq C|\log H|^{-\eps'},\label{hs-est3b}
\end{eqnarray}
for some 
$0<\eps'<2(1-\alpha)\eps/15, \alpha>0$ and $0<s<2+\eta$. Then from \eqref{dXid} we can conclude that there is an $s'\geq0$ so that
\begin{equation}\label{hs-est4}
\|\d(X_1-X_2)\|_{H^{s'}(M)}\leq C|\log H|^{-\eps'}.
\end{equation}
Similarly we want to give an estimate for the potentials $q_j$, starting from
\begin{equation}\label{l2-est5}
\|\tilde Q\|_{L^2(M)}=\|Q_2/(2|F_2|^2)-Q_1/(2|F_1|^2)\|_{L^2(M)}\leq C|\log H|^{-2\eps/15},
\end{equation}
where $Q_j:=-\star\d X_j+q_j, j=1,2$. Some simple algebra and using that 
the $|F_j|$ are bounded away from zero one can see that
\[
\|Q_1-Q_2\|_{L^2(M)}=\|q_1-q_2-\star\d(X_1-X_2)\|_{L^2(M)}\leq C|\log H|^{-2\eps/15}.
\]
from which it simply follows, similarly as above, that
\begin{equation}\label{hs-est6}
\|q_1-q_2\|_{H^{s'}(M)}\leq C|\log H|^{-\eps'}.
\end{equation}
For $s'=0$ we get Theorem 1.1, and $\eta>1$ would allow for larger $s'>0$.
\end{proof}

\section{Stability for the holonomy}\label{holonomy}

We finally focus our attention towards the holonomy of the 1-form $X:=X_1-X_2$. We can for every closed loop $\gamma$ based at any $m\in M$, consider parallel transport for the connection $\nabla^X=\d+\i X$ on the bundle $M\times\C$. This defines an isomorphism $P_\gamma^X:\C\to\C$, so we may view $P_\gamma^X$ as a non-zero complex number and define the holonomy group of $\nabla^X$ at $m$ by
\[
\text{Hol}(\nabla^X,m):=\{P_\gamma^X\in\C\setminus\{0\};\gamma \text{ is a closed loop based at } m\}.
\]
For real $X$, $P_\gamma^X$ is in fact unitary and it can be observed that $P_\gamma^X=\e^{\i\int_\gamma X}$. When the curvature $\d X=0$ there is a natural group morphism
\[
\rho_m^X:\pi_1(M,m)\to\text{Hol}(\nabla^X,m),
\]
where $\pi_1(M,m)$ is the first fundamental group, consisting of equivalence classes of closed curves up to homotopy. The morphism $\rho^X$ is called the holonomy representation into $GL(\C)$ and it is trivial if and only if $P_\gamma^X=1$ for all closed loops based at $m$, independent of $m$. We will show Theorem \ref{mainthm3}, or more precisely:
\[
\inf_{k\in\Z}\left|\int_\gamma X-2\pi k\right|\leq|\gamma|\omega(d(\mathcal{C}_1,\mathcal{C}_2)),
\]
where $\omega=\omega(x)=C(\log|\log x|)^{-\eps}\to0$ as $x\to0, C=C(K,Mp,\eps)$. Thus the holonomy representation is in this sense close to being trivial. 
\begin{proof}[Proof ot Theorem \ref{mainthm3}]
Let us again consider the functions $F_j$ and define the new function
\[
\Theta:=F_1F_2^{-1}.
\]
Then $\dbar\Theta=\i(A_1-A_2)\Theta$ and hence $\del\overline{\Theta}^{-1}=\i(\overline{A_1}-\overline{A_2})\overline{\Theta}^{-1}$. This implies that
\[
\frac{\dbar\Theta}{\Theta}+\frac{\del\overline{\Theta}^{-1}}{\overline{\Theta}^{-1}}=\i(A_1-A_2)+\i(\overline{A_1}-\overline{A_2})=\i(X_1-X_2)=\i X.
\]
By \eqref{hs-est3b} and Sobolev embedding, $||F_1|^2-|F_2|^2|\leq\omega$, so 
\[
|F_1\overline{F_1}-F_2\overline{F_2}|<\omega.
\]
Multiplying the above inequality with $|\overline{F_1}^{-1}F_2^{-1}|$ we equivalently have
\[
|F_1F_2^{-1}-\overline{F_2}\overline{F_1}^{-1}|=|\Theta-\overline{\Theta}^{-1}|<\omega,
\]
since the $F_j^{-1}$ are bounded away from zero. So we may write
\[
\overline{\Theta}^{-1}=\Theta+\theta,\quad |\theta|<\omega.
\]
Thus we have,
\[
\frac{\dbar\Theta}{\Theta}+\frac{\del\overline{\Theta}^{-1}}{\overline{\Theta}^{-1}}=\frac{\dbar\Theta}{\Theta}+\frac{\del(\Theta+\theta)}{\Theta+\theta}=\frac{\dbar\Theta}{\Theta}+\frac{\del\Theta}{\Theta}+\frac{\Theta\del\theta-\theta\del\Theta}{\Theta(\Theta+\theta)}=\frac{\d\Theta}{\Theta}+\tilde\theta=\i X,
\]
where $|\tilde\theta|\lesssim\omega$ by Sobolev embedding ($X_j\in W^{2,p}, p>2$ implies that $F_j\in W^{3,p}\subset C^{1,\alpha}$ for some $\alpha>0$). 

Let now $\gamma$ be any closed curve on $M$, it will be made up of a finite number of non-self-intersecting loops. So without loss of generality we can assume that $\gamma$ is made up of a single simple loops. We will show that for any such loop,
\begin{equation}
\int_\gamma\frac{\d\Theta}{\Theta}\in2\pi\i\Z,\label{holarg1}
\end{equation}
and hence it will follow that
\[
\inf_{k\in\Z}\left|\int_\gamma X-2\pi k\right|\leq\left|\int_\gamma\i X-\int_\gamma\frac{\d\Theta}{\Theta}\right|=\left|\int_\gamma\tilde\theta\right|\leq|\gamma|\omega,
\]
which is what we want to show.

To show \eqref{holarg1}, suppose we remove a small subarc $\gamma_{q,p}$ between two points $q,p\in\gamma$, so that we are left with another arc $\gamma_{p,q}$ consisting of the remaining points on $\gamma$ between $p$ and $q$. Suppose then that we take a simply connected tubular neighborhood $N(\gamma_{p,q})$ around $\gamma_{p,q}$. In this neighborhood,
\[
g(q):=\int_p^q\frac{\d\Theta}{\Theta},
\]
is well-defined (since the 1-form $\frac{\d\Theta}{\Theta}$ is closed) and by the fundamental theorem of calculus,
\[
\d g(q)=\frac{\d\Theta}{\Theta}(q).
\]
Furthermore, $g(p)=0$ so we can try to find a unique solution to this initial value problem. Take a parametrization of $\gamma=\gamma(t)$ so that $\gamma(0)=p, \gamma(1)=q$. Let $f(t):=\Theta(\gamma(t))$, then $f'(t)=\langle\d \Theta(\gamma(t)),\gamma'(t)\rangle=f(t)\langle\d g(\gamma(t)),\gamma'(t)\rangle$. Now $f(0)=\Theta(p)$ and solving the initial value problem for $f$ hence yields
\[
f(t)=\Theta(p)\e^{g(\gamma(t))}.
\]
Then $\Theta(q)=\Theta(p)\e^{g(q)}$. Taking the limit as $q\to p$ (strictly enlarging the neighborhood $N(\gamma_{p,q})$) we thus find
\[
1=\exp\left(\lim_{q\to p}\int_p^q\frac{\d\Theta}{\Theta}\right).
\]
We can then conclude that \eqref{holarg1} must indeed be true and the theorem follows.

\end{proof}

\bibliography{../../mybib}{}
\bibliographystyle{plain}

\end{document}